\pgfplotsset{compat=1.15}
\newtheorem{theorem}{Theorem}[section]
\newtheorem*{theorem*}{Theorem}
\newtheorem{corollary}[theorem]{Corollary}
\newtheorem{lemma}[theorem]{Lemma}
\newtheorem{proposition}[theorem]{Proposition}
\theoremstyle{remark}
 \newtheorem{remark}[theorem]{Remark}}
 \newtheorem*{remark*}{Remark}
{\theoremstyle{definition}
 \newtheorem{definition}[theorem]{Definition}

 \newtheorem{example}[theorem]{Example}

}
\def\XX{\mathbf{x}}
\def\x{\overline{x }}
\def\y{\overline{y }}
\newcommand{\KK}[0]{\ensuremath{\mathbf{k}}}
\newcommand{\ZZ}[0]{\ensuremath{\mathbb{Z}}}
\newcommand{\GA}[0]{\ensuremath{\mathbb{G}_{\mathrm{a}}}}
\newcommand{\GM}[0]{\ensuremath{\mathbb{G}_{\mathrm{m}}}}
\newcommand{\AF}[0]{\ensuremath{\mathbb{A}}}
\newcommand{\RR}[0]{\ensuremath{\mathbb{R}}}
\newcommand{\spec}[0]{\ensuremath{\operatorname{Spec}}}
\newcommand{\supp}[0]{\ensuremath{\operatorname{supp}}}
\newcommand{\Aut}[0]{\ensuremath{\operatorname{Aut}}}
\newcommand{\rank}[0]{\ensuremath{\operatorname{rank}}}
\newcommand{\homo}[0]{\ensuremath{\operatorname{Hom}}}
\newcommand{\Der}[0]{\ensuremath{\operatorname{Der}}}
\newcommand{\quot}[1]{{\overline{#1}}}
\newcommand*\bigcdot{\mathpalette\bigcdot@{.5}} \newcommand*\bigcdot@[2]{\mathbin{\vcenter{\hbox{\scalebox{#2}{$\m@th#1\bullet$}}}}} \makeatother
\begin{document}

\title[The Automorphism groups of zero-dimensional monomial algebras]{The Automorphism groups of zero-dimensional monomial algebras}

\author{Roberto D\'iaz}
\address{Departamento de Matemáticas, Facultad de Ciencias, Universidad de La Serena, Av. Juan Cisternas 1200, La Serena, Chile.
}%
\email{roberto.diazv1@userena.cl}

\author{Alvaro Liendo}
\address{Instituto de Matem\'atica y F\'isica, Universidad de Talca,
 Casilla 721, Talca, Chile.}%
\email{aliendo@utalca.cl}

\author{Gonzalo Manzano-Flores} %
\address{Instituto de Matem\'aticas, Facultad de Ciencias, Universidad de Valpara\'iso, Gran Breta\~{n}a 1111, Valpara\'iso, Chile.}
\email{gonzalo.manzano@uv.cl}

	\author{Andriy Regeta}
	\address{Dipartimento di Matematica ``Tullio Levi-Civita'',
Universit\`a di Padova, Via Trieste 63, I-35121 Padova}
\email{andriyregeta@gmail.com}

\date{\today}

\thanks{{\it 2020 Mathematics Subject
 Classification}: 13F55, 13N15, 14L30, 14M25.\\
 \mbox{\hspace{11pt}}{\it Key words}: monomial ideal, locally nilpotent derivation, automorphism group.\\
 \mbox{\hspace{11pt}} The first author is supported by the Fondecyt postdoc project 3230406. The second author was partially supported by Fondecyt project 1240101. The third author is supported by the Fondecyt postdoc project 3240191 and Concurso de Subvenci\'on a la Instalaci\'on en la Academia 2025 N°85250089. The fourth author is supported by DFG, project number 509752046. The first and second authors were also supported by ECOS-ANID (Grant Number ECOS230044).}

\begin{abstract}
A monomial algebra $B$ is defined as a quotient of a polynomial ring by a monomial ideal, which is an ideal generated by a finite set of monomials. In this paper, we determine the automorphism group of a monomial algebra $B$, under the assumption that $B$ is a finite-dimensional vector space over a field $\KK$ of characteristic zero. We achieve this by providing an explicit classification of the homogeneous locally nilpotent derivations on $B$. The main body of the paper addresses the more general case of semigroup algebras, with the polynomial ring being a particular case.
\end{abstract}

\maketitle


\section*{Introduction}

An affine semigroup $S$ is a finitely generated semigroup with an identity element that can be embedded into $\ZZ^n$. Let $\KK$ be a field of characteristic zero and $S$ an affine semigroup. The semigroup algebra $\KK[S]$ is defined as $\KK[S]=\bigoplus_{m \in S} \KK \cdot \XX^m$, where $\XX^m$ are new symbols satisfying the multiplication rules $\XX^m \cdot \XX^{m'} = \XX^{m+m'}$ and $\XX^0 = 1$. A common example of a semigroup algebra, which motivates our notation, is the polynomial ring $\KK[\XX] = \KK[x_1, \ldots, x_n]$ obtained with $S = \ZZ_{\geq 0}^n$, where $\XX^m$ corresponds precisely to monomials under the usual multi-index notation. By analogy, we refer to all symbols $\XX^m$ as monomials of $\KK[S]$. An ideal $I \subseteq \KK[S]$ is called monomial if it has a generating set composed of monomials. A monomial algebra $\KK[S]/I$ is the quotient of $\KK[S]$ by a monomial ideal $I$. A monomial algebra $\KK[S]/I$ is zero-dimensional if and only if its dimension as a $\KK$-vector space is finite.

The combinatorial nature of monomial algebras allows them to be identified with objects from other categories, making these algebras suitable for developing examples and verifying general theories. For instance, Stanley-Reisner rings, which are monomial algebras defined by square-free monomial ideals, are in bijective correspondence with abstract simplicial complexes \cite[Theorem 1.7]{MS05}. Stanley proved the upper bound conjecture for spheres using the Cohen-Macaulay structure of these rings \cite{S75}, which was established by Reisner \cite{R76}. In another example, a special class of monomial algebras known as discrete Hodge algebras was studied in \cite{CEC82}, enabling the examination of homogeneous coordinate rings of Grassmann varieties and their Schubert subvarieties.

\medskip

\noindent \emph{Our main goal in this paper is to compute the automorphism groups of zero-dimensional monomial algebras.}

\medskip

Our approach to computing the automorphism group of a zero-dimensional monomial algebra builds upon the modern developments arising from Demazure’s seminal work \cite{demazure1970sous}.
In particular, subsequent contributions such as \cite{C95,liendo2010affine,LL21, DL24} reinterpret ideas in the context of varieties over a ground field using the notion of locally nilpotent derivation.

A normal variety is called toric if it admits an algebraic torus action with an open orbit, and affine toric varieties are precisely given by $\spec \KK[S]$, where $S$ is an affine semigroup \cite{oda1983convex,fulton1993introduction,cox2011toric}. Although the geometric nature of smooth proper toric varieties is significantly different from that of zero-dimensional monomial algebras, both objects have similar combinatorial descriptions. This similarity allows us to apply some of the techniques from \cite{demazure1970sous} to our context. We refer the reader to \cite{LL21}, whose first author is the second named author of the present paper, for a concise modern account of Demazure's results.

For a $\KK$-algebra $B$, we denote by $\Der(B)$ the $\KK$-vector space of $\KK$-derivations $\partial \colon B \to B$. Additionally, if $I \subset B$ is an ideal, we denote by $\Der_I(B)$ the subspace of $\Der(B)$ consisting of $\KK$-derivations such that $\partial(I) \subset I$. All our derivations are $\KK$-derivations, so we will omit $\KK$ from the notation. A derivation $\partial \in \Der(B)$ is called locally nilpotent if for every $f \in B$ there exists $\ell \in \ZZ_{\geq 0}$ such that $\partial^\ell(f) = 0$, where $\partial^\ell$ denotes the $\ell$-th composition of $\partial$.

As a first step towards our goal of computing the automorphism groups of monomial algebras, we describe the set $\Der_I(\KK[S])$, where $S$ is an affine semigroup and $I$ is a monomial ideal (see \cref{decomposition-derivations}). This description has already been provided for the special case where the semigroup algebra $\KK[S]$ is a polynomial ring in \cite{B95,T09}. Consider an embedding $S \hookrightarrow M$ where $M \cong \ZZ^n$ such that the group generated by $S$ in $M$ is $M$ itself. The algebra $\KK[S]$ admits an $M$-grading, and since $I$ is $M$-graded, $\KK[S]/I$ inherits this $M$-grading. We make extensive use of these $M$-gradings throughout the paper, and in particular, we derive \cref{decomposition-derivations} straightforwardly from \cite{KLL15} by the second named author.

Let $S$ be an affine semigroup and $I$ be a monomial ideal. There is a natural map $\pi\colon\Der_I(\KK[S])\to\Der(\KK[S]/I)$ induced by the quotient. In \cref{th:surjective derivatio}, we show that $\pi$ is surjective for every monomial ideal $I$ if and only if $\KK[S]$ is a polynomial ring. A derivation $\overline{\partial}\colon \KK[S]/I \to \KK[S]/I$ is said to be liftable if there exists a derivation $\partial\colon \KK[S] \to \KK[S]$ such that $\pi(\partial) = \overline{\partial}$. In \cref{main-classification}, we provide a classification of the liftable locally nilpotent derivations on the monomial algebra $\KK[S]/I$ that are homogeneous with respect to the $M$-grading.

The interest in locally nilpotent derivations on an algebra $B$ stems from their one-to-one correspondence with additive group actions on $B$; see, for instance, \cite[Section~1.1.5]{F17}. Specifically, given a locally nilpotent derivation $\partial\colon B \to B$, we define the automorphism
\begin{align} \label{eq:exponential}
 \exp(\partial)\colon B \to B \quad \text{defined by} \quad f \mapsto \sum_{i=0}^\infty \frac{ \partial^i(f)}{i!}\,.
\end{align}
Now the  $\GA$-action on $B$ associated to $\partial$ is obtained as $\GA\to \Aut_\KK(B)$ given by $s\mapsto \exp(s\partial)$.

To describe our main result, we let $S = \ZZ_{\geq 0}^n$ so that $B = \KK[S]$ is a polynomial ring and we let $I$ be a monomial ideal. Without loss of generality, we may assume that $I$ is full, that is, no standard basis vector in $\ZZ^n$ is contained in $I$ (see \cref{def:full}). The next theorem collects structural properties of the automorphism group of a zero-dimensional monomial algebra. Our main result, \cref{th:aut}, offers a more detailed and comprehensive formulation.
Items $(i)-(iii)$ describe general features of linear algebraic groups, providing the algebraic framework for our study. 
In particular, $(ii)$ is not automatic: we show that the centralizer of the torus $T$ in $\operatorname{Aut}_\KK(B/I)$ equals $T$ (see \cref{lemma: torus}). 
This equality relies essentially on the assumption that $S$ is the first octant and that $I$ is a full monomial ideal; outside this setting the statement may fail (see \Cref{example: non maximal torus}). 
The main novelty of our work, too technical to be discussed in this introduction, lies in providing an explicit version of $(iii)$ through the classification of homogeneous locally nilpotent derivations (\cref{main-classification}), together with $(iv)$.
Regarding $(iv)$, in \cref{lemma: finite,proposition: finite,remark: finite} we provide concrete criteria to determine which automorphisms induced by semigroup automorphisms have non-trivial image in the quotient $\operatorname{Aut}_\KK(B/I)/\operatorname{Aut}^{0}_\KK(B/I)$, using the generating set of the ideal $I$. In particular, these results allow us to decide when $\operatorname{Aut}_\KK(B/I)$ is connected.

\begin{theorem*}\label{th:main}
Let $S=\ZZ^n_{\geq 0}$, making $B=\KK[S]$ a polynomial ring, and let $I \subset B$ be a full monomial ideal such that $B/I$ is a zero-dimensional monomial algebra. If $G=\Aut_\KK(B/I)$, then the following statements hold:
\begin{enumerate}[$(i)$]
 \item The group $G$ is linear algebraic.
 \item The algebraic torus $T=\spec\KK[\ZZ^n]$ is a maximal torus of $G$.
 \item The connected component $G^0$ of $G$ is generated by the maximal torus and the images in $G^0$ of all the $\GA$-actions corresponding to homogeneous nilpotent derivations $\partial\colon B/I \to B/I$.
 \item The finite group $G/G^0$ is generated by the images of automorphisms $B/I \to B/I$ induced by semigroup automorphisms $S \to S$ that map $I$ to itself.
\end{enumerate}
\end{theorem*}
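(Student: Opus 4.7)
The plan is to transport Demazure's toric-style argument to the zero-dimensional monomial setting, using the $M$-grading of $B/I$ as the skeleton that records the torus action and all homogeneous derivations. Statement \emph{(i)} is essentially immediate: since $\dim_\KK(B/I) < \infty$, the group $G = \Aut_\KK(B/I)$ is cut out of $GL(B/I)$ by the polynomial equations expressing preservation of the multiplication, hence is a linear algebraic group.

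For \emph{(ii)}, the $M$-grading on $B/I$ inherited from $B$ gives an action of $T = \spec\KK[\ZZ^n]$ by algebra automorphisms and thus an embedding $T \hookrightarrow G$. The decisive observation is that the monomial basis $\{\XX^m : m \in \supp(B/I)\}$ of $B/I$ consists of $T$-eigenvectors with pairwise distinct characters, so every $T$-weight space is a single line. Any torus $T' \subseteq G$ containing $T$ centralizes $T$, hence preserves all weight lines and acts diagonally on the monomial basis. Compatibility with multiplication combined with fullness of $I$ (which guarantees $e_1,\dots,e_n \in \supp(B/I)$ and that the support is downward-closed under the componentwise order, so it generates $M$ as a group) forces the diagonal eigenvalues to come from a character of $M$, i.e.\ from an element of $T$. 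Thus $T' = T$.

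For \emph{(iii)}, I use the weight decomposition of the adjoint $T$-action on $\Lie(G)$:
\[
 \Lie(G) = \Lie(G)_0 \oplus \bigoplus_{e \in M\setminus\{0\}} \Lie(G)_e.
\]
Maximality of $T$ from \emph{(ii)} identifies $\Lie(G)_0$ with $\Lie(T)$, while each $\partial \in \Lie(G)_e$ with $e \neq 0$ is a homogeneous derivation of $B/I$ of $M$-degree $e$. Since $\supp(B/I)$ is finite and $\partial^k$ shifts weights by $ke$, every such $\partial$ is automatically locally nilpotent. By \cref{th:surjective derivatio}, $\partial$ lifts to a homogeneous derivation of $B$, so \cref{main-classification} applies. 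Let $H \subseteq G^0$ be the closed connected subgroup generated by $T$ and all one-parameter subgroups $\exp(s\partial)$ with $\partial$ a homogeneous LND of $B/I$; the decomposition above then yields $\Lie(H) = \Lie(G^0)$, and connectedness of $G^0$ forces $H = G^0$. I expect this step to be the main obstacle: I need both the local nilpotency of every nonzero-weight element of $\Lie(G)$ (handled combinatorially by finiteness of the support) and the careful match with the lifted classification in \cref{main-classification}.

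For \emph{(iv)}, I combine conjugacy of maximal tori with weight-permutation analysis. For any $\sigma \in G$ there exists $g \in G^0$ with $\sigma T\sigma^{-1} = gTg^{-1}$, whence $g^{-1}\sigma \in N_G(T)$ and $G = G^0 \cdot N_G(T)$. An element $\sigma \in N_G(T)$ permutes the one-dimensional $T$-weight spaces and therefore sends $\XX^m$ to $\lambda_m \XX^{\pi(m)}$ for a bijection $\pi$ of $\supp(B/I)$ and scalars $\lambda_m \in \KK^\ast$; multiplicativity of $\sigma$ translates into additivity of $\pi$ on the support. The indecomposable nonzero elements of $\supp(B/I)$ are precisely the $e_i$ (by fullness and downward-closure of the support), so $\pi$ permutes $\{e_1,\dots,e_n\}$ and extends uniquely to a semigroup automorphism of $S = \ZZ^n_{\geq 0}$; it stabilizes $I$ because $\pi(\supp(B/I)) = \supp(B/I)$. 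Absorbing the scalars $\lambda_m$ into a suitable element of $T$ shows that $\sigma$ coincides, modulo $T \subseteq G^0$, with the automorphism induced by this semigroup automorphism of $S$, completing \emph{(iv)}.
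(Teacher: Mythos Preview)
Your argument is correct, and for parts \emph{(i)}, \emph{(ii)}, \emph{(iv)} it matches the paper's proof almost verbatim: the paper also realizes $G$ as a closed subgroup of $\operatorname{GL}(B/I)$, shows that the centralizer of $T$ equals $T$ via the same diagonal-plus-fullness argument, and handles $G/G^0$ by conjugating into $N_G(T)$ and then reading off a permutation of the $e_i$ after absorbing scalars into $T$.

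For part \emph{(iii)} you take a genuinely different route. The paper works on the group level: it invokes the Levi decomposition $G^0 = L \ltimes R$ and the fact that $L$ is generated by $T$ and its one-dimensional root subgroups while $R$ decomposes as a product of $T$-weight subgroups, so that $G^0$ is generated by $T$ together with its \emph{generalized root subgroups}; a separate lemma then identifies each such root subgroup with $\exp$ of a homogeneous locally nilpotent derivation. Your approach instead stays on the Lie algebra side: you use $\Lie(G)=\Der(B/I)$, take the $T$-weight decomposition, observe that every nonzero-weight derivation is nilpotent because $\supp(B/I)$ is finite, and conclude by comparing $\Lie(H)$ with $\Lie(G^0)$. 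Your argument is shorter and avoids the structure theory of algebraic groups entirely; the paper's version is more explicit about the group-theoretic shape of $G^0$ (maximal generalized root subgroups indexed by weights) and packages the generators as the maps $\iota_\alpha$.

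One small point to tighten: the sentence ``Maximality of $T$ from \emph{(ii)} identifies $\Lie(G)_0$ with $\Lie(T)$'' is not justified by maximality alone (think of $T$ acting trivially on a unipotent factor). What you actually need is $C_G(T)=T$, which your diagonal argument in \emph{(ii)} already proves for \emph{any} element centralizing $T$, not just for tori $T'\supseteq T$; alternatively, the Leibniz rule applied to a degree-$0$ derivation directly shows $\dim\Lie(G)_0=n$. Either way the fix is immediate.
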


A novel application of this result is a proof that these monomial algebras are determined by their automorphism groups. Indeed, the first author of this article proved in a recent preprint that if $\operatorname{Aut}^{0}(\mathbf{k}[x_1,\dots,x_n]/I) \cong \operatorname{Aut}^{0}(\mathbf{k}[x_1,\dots,x_m]/J)$ as algebraic groups, then $\mathbf{k}[x_1,\dots,x_n]/I \cong \mathbf{k}[x_1,\dots,x_m]/J$
\cite[Theorem 1.2]{DLA24}.

Let now $B$ be an affine domain, and consider $ \operatorname{Aut}_\KK(B) $ with its structure of ind-group \cite{FK18}. Assume that every automorphism of the connected component of the identity $\Aut_\KK^0(B)$ of $\operatorname{Aut}_\KK(B)$ is algebraic, that is, for every $g\in \operatorname{Aut}_\KK^0(B)$ there exists an algebraic subgroup $G$ of $\operatorname{Aut}_\KK^0(B)$ such that $g\in G$. Then, $\operatorname{Aut}_\KK^0(B)$ is a solvable group with derived length at most two (see \cite[Theorem 1.1]{PR24}; see also \cite[Theorem 1.3]{PR23}). In contrast, the automorphism group of a zero-dimensional monomial algebra $B$ does not necessarily have a solvable connected component (see \cref{rem:non-solv}).

\medskip

The article is organized as follows: In \cref{section 1}, we present preliminaries on semigroup algebras, monomial ideals, and homogeneous derivations. \Cref{section 2} explores the relationship between derivations on a semigroup algebra and its quotient monomial algebras. In \cref{section 3}, we classify liftable locally nilpotent derivations on monomial algebras. Finally, in \cref{section 4}, we describe the automorphism group of zero-dimensional monomial algebras that are quotients of the polynomial ring.

\subsection*{Acknowledgments}

We are deeply grateful to the anonymous referee of an earlier version of this manuscript for their thorough and constructive review, which greatly helped us to improve the quality of the paper. This collaboration began at the AGREGA2 conference, which took place at Universidad Técnica Federico Santa María in March 2023. We extend our gratitude to the institution for its support and hospitality. We also thank Giancarlo Lucchini-Arteche for fruitful discussions regarding the topic.

\section{Preliminaries}\label{section 1}

In this section, we gather some preliminary results that are essential for the subsequent sections.

\subsection{Semigroup algebras, monomial ideals and homogeneous derivations}\label{Section:preliminary}

Throughout this article, we fix a free abelian group $M$ of rank $n$, that is, $M \cong \ZZ^n$. We also consider $N$ as the dual group, defined by $N = \operatorname{Hom}(M, \ZZ)$. There exists a natural duality pairing 
$\langle\ ,\ \rangle \colon M \times N \to \ZZ$ given by $\langle m, p \rangle = p(m)
$. Let $L$ be any field. We define the $L$-vector spaces $M_L = M \otimes_\ZZ L$ and $N_L = N \otimes_\ZZ L$. The aforementioned duality pairing extends naturally to a pairing 
$\langle\ ,\ \rangle \colon M_L \times N_L \to L$.

An affine semigroup $S$ is a finitely generated semigroup with an identity element that can be embedded into a lattice. From now on, we fix a minimal embedding $S\hookrightarrow M$ in the sense that $ M $ does not have a sublattice containing $ S $. The cone $\omega_S$ of $S$ is the cone spanned by $S$ inside $M_\RR$. We say that an affine semigroup $S$ is saturated if $\omega_S \cap M = S$, and pointed if the only vector space contained in $\omega_S$ is $\{0\}$. Unless otherwise stated, all semigroups in the sequel are assumed to be affine, saturated, pointed, and minimally embedded.

Let $S$ be a semigroup. We say that a subsemigroup $S' \subset S$ is a face of $S$ if for all $m, m' \in S$ with $m + m' \in S'$, we have $m, m' \in S'$. Note that $S' \subset S$ is a face if and only if $S' = S \cap \tau$, where $\tau$ is a polyhedral face of $\omega_S$. A face of $S$ isomorphic to $\ZZ_{\geq0}$ as a semigroup is called a ray, and a maximal proper face of $S$ is called a facet. The rays of $S$ are given by $S \cap \tau$ where $\tau$ is a one-dimensional polyhedral face of $\omega_S$, and the facets of $S$ are given by $S \cap \tau$ where $\tau$ is a $(n-1)$-dimensional polyhedral face of $\omega_S$. A ray of $S$ is uniquely determined by its unique generator as a semigroup. We denote by $S(1) \subset S$ the set of all these generators of all the rays. The rank of a face $F \subset S$, denoted by $\rank F$, is the rank of the subgroup of $M$ generated by $F$. A semigroup is called simplicial if $S(1)$ is an $\RR$-basis of $M_\RR$.

For every semigroup $S$, we define its dual semigroup as
$$S^\vee=\{p\in N\mid p(m)\geq 0,\mbox{ for all } m\in S\}\,.$$
We observe that $S^\vee$ is an affine saturated semigroup, see \cite[Proposition~1.2.4]{cox2011toric}. Given a face $F \subseteq S$, its dual face \cite[Proposition~1.2.10 (a)]{cox2011toric} is defined as $F^* := F^\perp \cap S^\vee$ ,
where $F^\perp := \{ p \in N \mid p(m) = 0 \text{ for all } m \in F \}$.
This construction yields a face $F^* \subseteq S^\vee$, and defines an inclusion-reversing bijection between the set of faces of $S$ and the set of faces of $S^\vee$ \cite[Proposition~1.2.10 (b)]{cox2011toric}. In particular, for any face $F \subseteq S$, the double dual satisfies $(F^*)^* = F$.
Moreover, if $\operatorname{rank}(F)$ and $\operatorname{rank}(F^*)$ denote the ranks of the sublattices generated by $F$ and $F^*$, respectively, then $
\operatorname{rank}(F) + \operatorname{rank}(F^*) = \operatorname{rank}(M) = n,$ see \cite[Proposition~1.2.10 (c)]{cox2011toric}.

The semigroup algebra $\KK[S]$ of $S$ is defined as:
$$\KK[S]=\bigoplus_{m\in S}\KK\cdot\XX^m\quad\mbox{where}\quad \XX^m\cdot\XX^{m'}=\XX^{m+m'}\quad\mbox{and}\quad \XX^0=1\,.$$
The symbols $\XX^m$, with $m\in M$ are called monomials. Note that since $S$ is affine and saturated, $\KK[S]$ is a normal affine domain. 

An ideal $I\subset \KK [S]$ is called monomial if there exists $l\in\ZZ_{>0}$ and $\mathbf{a}_{1},\ldots,\mathbf{a}_{l}\in S$ such that $I=(\XX^{\mathbf{a}_1},\dots,\XX^{\mathbf{a}_l})$. The support of a monomial ideal $I\subset \KK[S]$ is denoted by $\supp(I),$ and is given by $$\supp(I)=\{m\in S\mid \mathbf{x}^m\in I\}=\bigcup_{k=1}^l (\mathbf{a}_k+S)\,.$$
Let $I\subset \KK[S]$ be a monomial ideal. We say that $I$ has cofinite support if $S\setminus \supp(I)$ is finite. 

\begin{example}[The polynomial ring] \label{ex:polynomial-ring} 
Let $M=\mathbb{Z}^{n}$ and let $E=\{e_{1},\ldots,e_{n}\}$ be its canonical basis. Let $S=\ZZ_{\geq 0}^n.$ There exists a natural isomorphism from $\KK[S]$ to 
 the polynomial ring $\KK[x_1,\ldots,x_n]$, given by
$\XX^{e_i}\mapsto x_i$, for $1\leq i\leq n$. Under this isomorphism, for every $m=(m_{1},\ldots,m_{n})\in S$, the monomial $\XX^{m}$ corresponds to $x_{1}^{m_{1}}\cdots x_{n}^{m_{n}}$.

Now, fixing $n=2$ we have that $\KK[S]=\KK[x,y]$. Let $I_{1}=(x^2y^5,x^3y^2,x^5)\subset\KK[S]$ and $I_2=(y^5,x^3y^2,x^6)\subset\KK[S]$. Note that $I_2$ is a monomial ideal with cofinite support while $I_1$ is not. The ideal $I_1$ is represented in \cref{fig:1-ex1} and $I_2$ is represented in \cref{fig:2-ex1}. 
The solid dots $\bullet$ represent elements in $\supp(I_i),$ while the hollow dots $\circ$ represent elements in $m\in S\setminus\supp(I_i)$, for $1\leq i\leq 2$.

\begin{figure}[H]
\begin{minipage}[c]{0.45\textwidth}
\centering
\begin{picture}(100,95)
\definecolor{gray1}{gray}{0.7}
\definecolor{gray2}{gray}{0.85}
\definecolor{green}{RGB}{0,124,0}

\textcolor{gray2}{\put(0,20){\vector(1,0){90}}}

\textcolor{gray2}{\put(10,10){\vector(0,1){88}}}

\put(0,10){\textcolor{gray1}{\circle*{3}}}
\put(10,10){\textcolor{gray1}{\circle*{3}}}
\put(20,10){\textcolor{gray1}{\circle*{3}}} 
\put(30,10){\textcolor{gray1}{\circle*{3}}} 
\put(40,10){\textcolor{gray1}{\circle*{3}}} 
\put(50,10){\textcolor{gray1}{\circle*{3}}} 
\put(60,10){\textcolor{gray1}{\circle*{3}}} 
\put(70,10){\textcolor{gray1}{\circle*{3}}} 
\put(80,10){\textcolor{gray1}{\circle*{3}}}

\put(0,20){\textcolor{gray1}{\circle*{3}}}
\put(10,20){\circle{3}} 
\put(20,20){\circle{3}} 
\put(30,20){\circle{3}} 
\put(40,20){\circle{3}}
\put(50,20){\circle{3}}
\put(60,20){\circle*{3}}
\put(70,20){\circle*{3}}
\put(80,20){\circle*{3}}

\put(0,30){\textcolor{gray1}{\circle*{3}}}
\put(10,30){{\circle{3}}}
\put(20,30){\circle{3}} 
\put(30,30){\circle{3}} 
\put(40,30){\circle{3}}
\put(50,30){\circle{3}}
\put(60,30){{\circle*{3}}} 
\put(70,30){\circle*{3}}
\put(80,30){\circle*{3}}

\put(0,40){\textcolor{gray1}{\circle*{3}}}
\put(10,40){{\circle{3}}}
\put(20,40){{\circle{3}}}
\put(30,40){\circle{3}} 
\put(40,40){{\circle*{3}}}
\put(50,40){{\circle*{3}}}
\put(60,40){\circle*{3}} 
\put(70,40){\circle*{3}}
\put(80,40){\circle*{3}}

\put(0,50){\textcolor{gray1}{\circle*{3}}}
\put(10,50){{\circle{3}}}
\put(20,50){{\circle{3}}}
\put(30,50){{\circle{3}}}
\put(40,50){\circle*{3}}
\put(50,50){\circle*{3}}
\put(60,50){\circle*{3}} 
\put(70,50){\circle*{3}} 
\put(80,50){\circle*{3}}

\put(0,60){\textcolor{gray1}{\circle*{3}}}
\put(10,60){{\circle{3}}}
\put(20,60){{\circle{3}}}
\put(30,60){{\circle{3}}}
\put(40,60){\circle*{3}}
\put(50,60){\circle*{3}}
\put(60,60){\circle*{3}} 
\put(70,60){\circle*{3}}
\put(80,60){\circle*{3}}

\put(0,70){\textcolor{gray1}{\circle*{3}}}
\put(10,70){{\circle{3}}}
\put(20,70){{\circle{3}}}
\put(30,70){{\circle*{3}}}
\put(40,70){{\circle*{3}}}
\put(50,70){\circle*{3}}
\put(60,70){\circle*{3}} 
\put(70,70){\circle*{3}} 
\put(80,70){\circle*{3}}

\put(0,80){\textcolor{gray1}{\circle*{3}}}
\put(10,80){{\circle{3}}}
\put(20,80){{\circle{3}}}
\put(30,80){{\circle*{3}}}
\put(40,80){{\circle*{3}}}
\put(50,80){{\circle*{3}}}
\put(60,80){\circle*{3}} 
\put(70,80){\circle*{3}} 
\put(80,80){\circle*{3}}

\put(0,90){\textcolor{gray1}{\circle*{3}}}
\put(10,90){{\circle{3}}}
\put(20,90){{\circle{3}}}
\put(30,90){{\circle*{3}}}
\put(40,90){{\circle*{3}}}
\put(50,90){{\circle*{3}}}
\put(60,90){{\circle*{3}}}
\put(70,90){\circle*{3}} 
\put(80,90){\circle*{3}} 
\end{picture}
\centering
\caption{$I_1=(x^2y^5,x^3y^2,x^5)$}\label{fig:1-ex1}
\end{minipage}
 \hfill
\begin{minipage}[c]{0.45\textwidth}

\centering
\begin{picture}(100,95)
\definecolor{gray1}{gray}{0.7}
\definecolor{gray2}{gray}{0.85}
\definecolor{green}{RGB}{0,124,0}

\textcolor{gray2}{\put(0,20){\vector(1,0){90}}}

\textcolor{gray2}{\put(10,10){\vector(0,1){88}}}

\put(0,10){\textcolor{gray1}{\circle*{3}}}
\put(10,10){\textcolor{gray1}{\circle*{3}}}
\put(20,10){\textcolor{gray1}{\circle*{3}}} 
\put(30,10){\textcolor{gray1}{\circle*{3}}} 
\put(40,10){\textcolor{gray1}{\circle*{3}}} 
\put(50,10){\textcolor{gray1}{\circle*{3}}} 
\put(60,10){\textcolor{gray1}{\circle*{3}}} 
\put(70,10){\textcolor{gray1}{\circle*{3}}} 
\put(80,10){\textcolor{gray1}{\circle*{3}}}

\put(0,20){\textcolor{gray1}{\circle*{3}}}
\put(10,20){\circle{3}} 
\put(20,20){\circle{3}} 
\put(30,20){\circle{3}} 
\put(40,20){\circle{3}}
\put(50,20){\circle{3}}
\put(60,20){\circle{3}}
\put(70,20){\circle*{3}}
\put(80,20){\circle*{3}}

\put(0,30){\textcolor{gray1}{\circle*{3}}}
\put(10,30){{\circle{3}}}
\put(20,30){\circle{3}} 
\put(30,30){\circle{3}} 
\put(40,30){\circle{3}}
\put(50,30){\circle{3}}
\put(60,30){{\circle{3}}} 
\put(70,30){\circle*{3}}
\put(80,30){\circle*{3}}

\put(0,40){\textcolor{gray1}{\circle*{3}}}
\put(10,40){{\circle{3}}}
\put(20,40){{\circle{3}}}
\put(30,40){\circle{3}} 
\put(40,40){{\circle*{3}}}
\put(50,40){{\circle*{3}}}
\put(60,40){\circle*{3}} 
\put(70,40){\circle*{3}}
\put(80,40){\circle*{3}}

\put(0,50){\textcolor{gray1}{\circle*{3}}}
\put(10,50){{\circle{3}}}
\put(20,50){{\circle{3}}}
\put(30,50){{\circle{3}}}
\put(40,50){\circle*{3}}
\put(50,50){\circle*{3}}
\put(60,50){\circle*{3}} 
\put(70,50){\circle*{3}} 
\put(80,50){\circle*{3}}

\put(0,60){\textcolor{gray1}{\circle*{3}}}
\put(10,60){{\circle{3}}}
\put(20,60){{\circle{3}}}
\put(30,60){{\circle{3}}}
\put(40,60){\circle*{3}}
\put(50,60){\circle*{3}}
\put(60,60){\circle*{3}} 
\put(70,60){\circle*{3}}
\put(80,60){\circle*{3}}

\put(0,70){\textcolor{gray1}{\circle*{3}}}
\put(10,70){{\circle*{3}}}
\put(20,70){{\circle*{3}}}
\put(30,70){{\circle*{3}}}
\put(40,70){{\circle*{3}}}
\put(50,70){\circle*{3}}
\put(60,70){\circle*{3}} 
\put(70,70){\circle*{3}} 
\put(80,70){\circle*{3}}

\put(0,80){\textcolor{gray1}{\circle*{3}}}
\put(10,80){{\circle*{3}}}
\put(20,80){{\circle*{3}}}
\put(30,80){{\circle*{3}}}
\put(40,80){{\circle*{3}}}
\put(50,80){{\circle*{3}}}
\put(60,80){\circle*{3}} 
\put(70,80){\circle*{3}} 
\put(80,80){\circle*{3}}

\put(0,90){\textcolor{gray1}{\circle*{3}}}
\put(10,90){{\circle*{3}}}
\put(20,90){{\circle*{3}}}
\put(30,90){{\circle*{3}}}
\put(40,90){{\circle*{3}}}
\put(50,90){{\circle*{3}}}
\put(60,90){{\circle*{3}}}
\put(70,90){\circle*{3}} 
\put(80,90){\circle*{3}} 
\end{picture}
\centering

\caption{$I_2=(y^5,x^3y^2,x^6)$}\label{fig:2-ex1}
\end{minipage}

\end{figure}

\end{example}

The algebra $\KK[S]$ is naturally $M$-graded, and under this grading every monomial ideal $I$ is a graded ideal. Thus, the quotient $\KK[S]/I$ inherits an $M$-grading given by $\deg \quot{\XX}^m=m$ if $m\notin \supp(I)$, where $\quot{\XX}^m$ denotes the image of $\XX^m$ within $\KK[S]/I$. Recall that if $m\in \supp(I)$, then $\quot{\XX}^m=0$. In the sequel, we will consider $\KK[S]/I$ as an $M$-graded algebra. 

\medskip

Let $B$ be a $\KK$-algebra. A derivation $\partial$ of $B$ is a $\KK$-linear map satisfying the Leibniz rule, that is, 
$$
\partial(fg)=\partial(f)g+f\partial(g), \quad \text{for all } f,g \in B.
$$
We denote by $\Der(B)$ the $\KK$-vector space of derivations on $B$. Let $I\subset B$ be an ideal. We denote by $\Der_I(B)$ the subspace of derivations $\partial \in \Der(B)$ such that $\partial(I)\subset I$. A derivation $\partial$ is locally nilpotent if for each $f\in B$ there exists $\ell\in \ZZ_{>0}$ such that $\partial^\ell(f)=0$, where $\partial^\ell$ denotes $\partial$ composed with itself $\ell$ times.

Letting $S$ be a semigroup, we let now $\partial$ be a derivation on $\KK[S]$. We say that $\partial$ is homogeneous if it sends homogeneous elements into homogeneous elements with respect to the $M$-grading of $\KK[S]$. If $\partial$ is homogeneous, by \cite[Lemma~1.1]{DL24}, there exists a unique element $\alpha\in M$, called the degree of $\partial$ and denoted by $\deg\partial$, such that for every $\XX^m\notin\ker\partial$ we have $\partial(\XX^m)=\lambda\XX^{m+\alpha}$ for some $\lambda\in \KK$. Furthermore, by \cite[Proposition 2.1]{DL24}, every derivation in $\Der(\KK[S])$ admits a finite decomposition $\partial=\sum\partial_{\alpha}$, where each $\partial_{\alpha}$ is a homogeneous derivation on $\KK[S]$ of degree $\alpha\in M$. 

\begin{definition}
We say that a homogeneous derivation $\partial$ is inner if $\deg\partial\in S$ and outer if $\deg\partial\in M\setminus S$. 
\end{definition}

A straightforward application of the Leibniz rule shows that every homogeneous derivation on $\KK[S]$ is of the form
\begin{align}\label{equation:homogeneous}
\partial_{\alpha,p}\colon \KK[S]\to \KK[S],\quad\XX^m\mapsto p(m)\cdot\XX^{m+\alpha}\,,
\end{align}
 where $\alpha=\deg\partial_{\alpha,p}\in M$ and $p\in N_\KK=N\otimes_\ZZ\KK$. Conversely, every such map is a derivation.

Let now $S$ be an affine semigroup and $I\subset \KK[S]$ be a monomial ideal. The Lie algebras $\operatorname{Der}(\KK [S])$ and $\operatorname{Der}_I(\KK [S])$ have been studied by several authors, see for instance \cite{B95,T09,liendo2010affine,KLL15,DL24}. We present now the description of derivations on $\KK[S]$ given in \cite[Proposition~3.1]{KLL15} with the notation adapted to our context. To describe inner derivations, we now let
$$\mathfrak{g}=\bigoplus_{\alpha\in S} \mathfrak{g}_\alpha\subset \Der(\KK[S])\quad\mbox{where}\quad \mathfrak{g}_\alpha=\left\{\partial_{\alpha,p}\mid p\in N_\KK\right\}.$$
For every $\alpha\in S$, we have that $\mathfrak{g}_{\alpha}$ is a vector space isomorphic to $N_\KK,$ since $\partial_{\alpha,p}+\partial_{\alpha,q}=\partial_{\alpha,p+q},$ for all $p,q\in N_\KK$.

To describe outer derivations, which correspond exactly to the homogeneous locally nilpotent derivations in the notation of \cite[Proposition~3.1]{KLL15}, we need to introduce some notation. Recall that $S^\vee$ denotes the dual semigroup of $S$. For every $\rho\in S^\vee(1)$ we define the set
\begin{align}\label{eq:R}
\mathcal{R}=\bigcup_{\rho\in S^\vee(1)}\mathcal{R}_\rho\quad \mbox{where}\quad \mathcal{R}_\rho=\left\{\alpha\in M\mid \rho(\alpha)=-1\mbox{ and } \rho'(\alpha)\geq 0\mbox{ for all } \rho'\in S^\vee(1)\setminus\{\rho\}\right\}\,.
\end{align}
Elements in $\mathcal{R}$ are called the Demazure roots of the semigroup $S$. Note that by definition, $\mathcal{R}_\rho$ is disjoint from $\mathcal{R}_{\rho'}$ whenever $\rho\neq \rho'$. Indeed, for every $\alpha \in \mathcal{R}_\rho$, we have that $\rho'(\alpha)\geq 0$ and so $\alpha\notin \mathcal{R}_{\rho'}$. We now let $$\mathfrak{s}=\bigoplus_{\alpha\in \mathcal{R}}\mathfrak{s}_\alpha\quad \mbox{where}\quad \mathfrak{s}_\alpha=\left\{\partial_{\alpha,p}\mid p=\lambda\cdot \rho, \mbox{ with } \lambda\in \KK\right\}$$
is the $1$-dimensional vector space of outer derivations on degree $\alpha$. 

\begin{proposition}[{\cite[Proposition~3.1]{KLL15}}] \label{decomposition-no-I}
Let $S$ be a saturated affine semigroup. Then
$$\Der(\KK[S])=\mathfrak{g}\oplus \mathfrak{s}\,,$$ where homogeneous derivations in $\mathfrak{g}$ are inner and homogeneous derivations in $\mathfrak{s}$ are outer.
\end{proposition}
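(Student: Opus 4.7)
The plan is to decompose an arbitrary derivation into its $M$-homogeneous components, put each component into a standard form via the Leibniz rule, and then determine which of those standard forms actually restrict to a derivation of $\KK[S]$ rather than of the ambient group algebra $\KK[M]$. By \cite[Proposition~2.1]{DL24}, any $\partial \in \Der(\KK[S])$ admits a finite decomposition $\partial = \sum_{\alpha \in M} \partial_\alpha$ into $M$-homogeneous pieces, so it suffices to classify each $\partial_\alpha$. Writing $\partial_\alpha(\XX^m) = c_m \XX^{m+\alpha}$ with $c_m \in \KK$ (understanding $c_m = 0$ whenever $m+\alpha \notin S$), the Leibniz rule applied to $\XX^m \cdot \XX^{m'} = \XX^{m+m'}$ forces $c_{m+m'} = c_m + c_{m'}$. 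Since $S$ generates $M$ as a group by the minimal embedding hypothesis, $m \mapsto c_m$ extends uniquely to a homomorphism $p \in \homo(M,\KK) = N_\KK$, so $\partial_\alpha = \partial_{\alpha,p}$.

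Next I would characterize, in terms of $(\alpha,p)$, when the formula $\XX^m \mapsto p(m)\XX^{m+\alpha}$ lands in $\KK[S]$. The requirement is that $p$ vanish on $V_\alpha := \{m \in S : m+\alpha \notin S\}$. If $\alpha \in S$, then $V_\alpha = \emptyset$ and every $p \in N_\KK$ is admissible; this recovers $\mathfrak{g}_\alpha \cong N_\KK$ and assembles into the inner summand $\mathfrak{g}$. If $\alpha \notin S$, then saturation $S = \omega_S \cap M$ combined with $\omega_S = \bigcap_{\rho \in S^\vee(1)} \{\rho \geq 0\}$ yields at least one $\rho \in S^\vee(1)$ with $\rho(\alpha) < 0$, and $V_\alpha = \bigcup_{\rho(\alpha) < 0} \{m \in S : \rho(m) < -\rho(\alpha)\}$ is nonempty.

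The core of the argument, and the step I expect to be the main obstacle, is the outer case. I would split it into three subcases. First, if two distinct rays $\rho_1, \rho_2 \in S^\vee(1)$ both satisfy $\rho_i(\alpha) < 0$, then $V_\alpha$ contains both facets $F_{\rho_i} = \{m \in S : \rho_i(m) = 0\}$; these lie in distinct hyperplanes of $M_\RR$ and each $\ZZ$-generates the rank $n-1$ sublattice $(\ker \rho_i) \cap M$, so their union spans all of $M$ and forces $p = 0$. Second, if a unique $\rho$ has $\rho(\alpha) \leq -2$ with $\rho'(\alpha) \geq 0$ for every other $\rho'$, then $V_\alpha$ contains $F_\rho$ together with a lattice point of $S$ on which $\rho$ takes the value $1$; adjoining such a point again extends the $\ZZ$-span of $F_\rho$ to all of $M$, so $p = 0$. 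The remaining subcase is $\rho(\alpha) = -1$ for a unique $\rho$ with $\rho'(\alpha) \geq 0$ for all other $\rho'$, which is exactly the condition $\alpha \in \mathcal{R}_\rho$; here $V_\alpha = F_\rho$ generates precisely $(\ker \rho)\cap M$, whose annihilator in $N_\KK$ is the one-dimensional line $\KK \cdot \rho$, matching $\mathfrak{s}_\alpha$ and in particular forcing $\alpha \in \mathcal{R}$. The delicate technical points, on which the whole case analysis rests, are the two lattice-generation statements for saturated affine semigroups: each facet $\ZZ$-generates the full sublattice $(\ker \rho) \cap M$, and for every $\rho \in S^\vee(1)$ there exists $m \in S$ with $\rho(m) = 1$. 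Both follow from the primitivity of $\rho \in N$ combined with the saturation, pointedness, and minimal-embedding hypotheses, and they are the items I would verify most carefully before packaging everything as the decomposition $\Der(\KK[S]) = \mathfrak{g} \oplus \mathfrak{s}$.
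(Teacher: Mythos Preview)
The paper does not prove this proposition; it is quoted verbatim from \cite[Proposition~3.1]{KLL15} and used as a black box. Your proposal therefore goes beyond what the paper does by supplying a self-contained argument, and that argument is correct. The reduction to homogeneous pieces and the identification $\partial_\alpha=\partial_{\alpha,p}$ via the Leibniz rule are already recorded in the paper's preliminaries (the paragraph surrounding \eqref{equation:homogeneous}), so the only genuinely new content in your write-up is the case analysis on the sign pattern of $(\rho(\alpha))_{\rho\in S^\vee(1)}$ determining which $p\in N_\KK$ vanish on $V_\alpha$. The two lattice facts you single out as delicate are indeed the crux and are both valid under the standing hypotheses: for each $\rho\in S^\vee(1)$ the facet $F_\rho=S\cap\rho^\perp$ generates $(\ker\rho)\cap M$ because any $v\in(\ker\rho)\cap M$ can be pushed into the full-dimensional cone $\omega_S\cap\rho^\perp$ by adding a sufficiently interior element of $F_\rho$; and the existence of $m\in S$ with $\rho(m)=1$ follows from primitivity of $\rho$ by the same translation trick. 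With those in hand your three subcases cleanly yield $\mathfrak{g}$, zero, and $\mathfrak{s}$ respectively.
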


We now provide a description of $\Der_I(\KK[S])$. The image $\partial(I)$ is contained in $I$ for all derivations $\partial\in \mathfrak{g}$. So $\mathfrak{g}\subset \Der_I(\KK[S])$. For outer derivations, let $\mathfrak{s}(I)=\{\partial\in \mathfrak{s}\mid \partial(I)\subset I\}$. Letting $I\subset \KK[S]$ be the ideal $I=(\XX^{\mathbf{a}_1},\dots,\XX^{\mathbf{a}_l})$ with $\mathbf{a}_i\in S$, we let 

\begin{equation}\label{eq:R_I}
    \mathcal{R}(I)=\bigcup_{\rho\in S^\vee(1)} \mathcal{R}_\rho(I) \quad\mbox{where} \quad
    \mathcal{R}_\rho(I)=\left\{\alpha\in \mathcal{R}_\rho\mid \mathbf{a}_i+\alpha\in \supp(I) \mbox{ for all } \mathbf{a}_i\notin \rho^\bot\right\}\,.
\end{equation}
The following proposition provides a description of $\Der_I(\KK[S])$.

\begin{proposition}\label{decomposition-derivations} 
Let $S$ be a saturated affine semigroup and let $I=(\XX^{\mathbf{a}_1},\dots,\XX^{\mathbf{a}_l})$ with $\mathbf{a}_i\in S$. Then
 $$\mathfrak{s}(I)=\bigoplus_{\alpha\in \mathcal{R}(I)}\mathfrak{s}_{\alpha}.$$ In particular $\Der_I(\KK[S])=\mathfrak{g}\oplus\mathfrak{s}(I).$
\end{proposition}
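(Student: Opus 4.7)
The plan is to combine the decomposition from \cref{decomposition-no-I} with the $M$-graded structure of $I$ and a direct check on generators. First, I would observe that $\Der_I(\KK[S])$ is itself $M$-graded: if $\partial=\sum_\alpha \partial_\alpha$ is the homogeneous decomposition from \cite[Proposition~2.1]{DL24}, then since $I$ is an $M$-graded ideal, $\partial(I)\subset I$ holds if and only if $\partial_\alpha(I)\subset I$ for every $\alpha\in M$. So by \cref{decomposition-no-I}, it suffices to decide, for each $\alpha$, whether the homogeneous component of degree $\alpha$ lies in $\Der_I(\KK[S])$, and the possible degrees are $\alpha\in S$ (inner) or $\alpha\in\mathcal{R}$ (outer).

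Next, I would dispose of the inner part. A homogeneous inner derivation has the form $\partial_{\alpha,p}(\XX^m)=p(m)\XX^{m+\alpha}$ with $\alpha\in S$. Since $\supp(I)=\bigcup_k(\mathbf{a}_k+S)$ is stable under translation by elements of $S$, we have $m\in\supp(I)\Rightarrow m+\alpha\in\supp(I)$, so $\partial_{\alpha,p}(I)\subset I$ and $\mathfrak{g}\subset\Der_I(\KK[S])$. Together with the previous step this reduces the proposition to computing $\mathfrak{s}\cap\Der_I(\KK[S])$ componentwise, i.e., to determining for which $\alpha\in\mathcal{R}$ the one-dimensional space $\mathfrak{s}_\alpha$ lies in $\Der_I(\KK[S])$.

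Now, I would use the standard reduction that a derivation $\partial\in\Der(\KK[S])$ preserves the ideal $I=(\XX^{\mathbf{a}_1},\dots,\XX^{\mathbf{a}_l})$ if and only if $\partial(\XX^{\mathbf{a}_k})\in I$ for each generator: indeed, any element of $I$ has the form $\sum_k g_k\XX^{\mathbf{a}_k}$, and the Leibniz rule gives $\partial(\sum_k g_k\XX^{\mathbf{a}_k})=\sum_k\partial(g_k)\XX^{\mathbf{a}_k}+\sum_k g_k\partial(\XX^{\mathbf{a}_k})$, where the first sum is in $I$ automatically and the second is in $I$ as soon as every $\partial(\XX^{\mathbf{a}_k})$ is. Fix $\alpha\in\mathcal{R}_\rho$ and take the generator $\partial_{\alpha,\rho}$ of $\mathfrak{s}_\alpha$. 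Then $\partial_{\alpha,\rho}(\XX^{\mathbf{a}_k})=\rho(\mathbf{a}_k)\XX^{\mathbf{a}_k+\alpha}$. If $\mathbf{a}_k\in\rho^\perp$, then $\rho(\mathbf{a}_k)=0$ and this vanishes, so lies in $I$; if $\mathbf{a}_k\notin\rho^\perp$, then $\rho(\mathbf{a}_k)>0$ and the image lies in $I$ precisely when $\mathbf{a}_k+\alpha\in\supp(I)$. This is exactly the condition defining $\mathcal{R}_\rho(I)$ in \eqref{eq:R}'s analogue, so $\mathfrak{s}_\alpha\subset\Der_I(\KK[S])\iff\alpha\in\mathcal{R}(I)$.

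The main (and only real) subtlety is the reduction to checking the condition on generators rather than on the whole ideal; once this is observed the rest is a direct computation using the homogeneous formula \eqref{equation:homogeneous}. Combining the three steps yields $\mathfrak{s}(I)=\bigoplus_{\alpha\in\mathcal{R}(I)}\mathfrak{s}_\alpha$ and hence $\Der_I(\KK[S])=\mathfrak{g}\oplus\mathfrak{s}(I)$, as claimed.
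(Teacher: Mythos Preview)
Your argument is correct and follows essentially the same route as the paper: decompose into homogeneous pieces, use that $I$ is $M$-graded to reduce to a single homogeneous outer derivation, and then check the condition $\partial_{\alpha,\rho}(\XX^{\mathbf{a}_k})\in I$ on the generators. The only cosmetic differences are that the paper invokes \cite[Corollary~1.1.3]{HH11} (a polynomial lies in a monomial ideal iff each of its monomials does) in place of your Leibniz-rule reduction, and that the paper only spells out the inclusion $\mathfrak{s}(I)\subset\bigoplus_{\alpha\in\mathcal{R}(I)}\mathfrak{s}_\alpha$, leaving the reverse direction implicit; your write-up handles both directions explicitly.
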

 \begin{proof} Let $\partial\in \mathfrak{s}(I).$ Then there exists $r\in\mathbb{Z}_{> 0}$ such that $\partial=\sum_{i=1}^r \partial_{\alpha_i},$ where $\alpha_{i}=\deg \partial_{\alpha_{i}},\:\partial_{\alpha_{i}}\in\mathfrak{s}_{\alpha_{i}}$ and $\partial_{\alpha_i}$ is homogeneous for all $i\in\{1,\ldots,r\}.$ Let $\alpha=\alpha_{i},$ for some $i\in\{1,\ldots,r\},$ and let $\rho\in N$ be such that $\alpha\in\mathcal{R}_{\rho}.$ We claim that $\alpha\in \mathcal{R}_{\rho}(I).$ Let  $\mathbf{a}\in\{\mathbf{a}_{1},\ldots,\mathbf{a}_{l}\}\setminus\rho^{\perp}.$ Then $\partial(\XX^{\mathbf{a}})=\sum\partial_{\alpha_{i}}(\XX^{\mathbf{a}})\in I.$ It follows by \cite[Corollary 1.1.3]{HH11} that $\partial_{\alpha}(\XX^{\mathbf{a}})\in I,$ that is $\mathbf{a}+\alpha\in\supp(I).$ Hence $\alpha\in\mathcal{R}_{\rho}(I).$ The second statement follows directly from \Cref{decomposition-no-I}.
 \end{proof}

 The above description of $\mathfrak{s}(I)$ was given in \cite[Theorem~2.2]{T09} for the case where $\KK[S]$ is the polynomial ring, as in \cref{ex:polynomial-ring}, see also \cite[Theorem~2.2.1]{B95}. We apply \cref{decomposition-derivations} to the polynomial ring in the following example.

 \begin{example}[derivations on the polynomial ring]\label{example: homogeneous derivations on the polynomial ring}
With the notation of \cref{ex:polynomial-ring}, we have that the inner derivations on the polynomial ring
are of the form
$$\partial_{\alpha,p}=x_1^{\alpha_1}\ldots x_n^{\alpha_n}\cdot\left(p_1x_1\frac{d}{dx_1}+\cdots+p_nx_n\frac{d}{dx_n}\right),$$
for $\alpha=(\alpha_1,\ldots,\alpha_n)\in \ZZ_{\geq0}^n$ and $p=(p_1,\dots,p_n) \in \KK^n.$ We now compute the outer derivations. Let $E^{*}=\{e_{1}^{*},\ldots,e_{n}^{*}\}$ be the canonical basis of $N.$ Then $E^{*}$ is the set of generating vectors of the rays of $S^{*}.$ Thus, for $1\leq j\leq n,$ we have $$\mathcal{R}_{e_j^*}=\big\{\alpha=(\alpha_1,\ldots,\alpha_n)\in \ZZ^n \mid \alpha_j=-1 \mbox{ and } \alpha_i\geq 0 \mbox{ for all } i\neq j\big\}.$$
Hence, $\mathcal{R}=\bigcup_{j=1}^{n}\mathcal{R}_{e_{j}^{*}}$ contains the degree of all the outer derivations on $\KK[S].$ Thus, for $1\leq j\leq n,$ the derivation $\partial_{\alpha,p},$ where $p=\lambda\cdot e_j^*,$ for some $\lambda\in\KK^{*}$ and $\alpha=(\alpha_{1},\ldots,\alpha_{n})\in \mathcal{R}_{e_{j}^{*}},$ corresponds to
$$
\partial_{\alpha,p}=\lambda\cdot x_1^{\alpha_1}\cdots \widehat{x_j^{\alpha_j}}\cdots x_n^{\alpha_n}\cdot\frac{d}{dx_j}\,,
$$
where $\widehat{x_j^{\alpha_j}}$ means that the factor is excluded from the product.
\end{example}

\begin{example} With the notation of \cref{ex:polynomial-ring}, we let $n=2$. Let $I=I_1\subset\KK[x,y]$ be the monomial ideal $(x^2y^5,x^3y^2,x^5)$. We compute $\mathcal{R}(I).$ The support $\supp(I)$ is presented in \cref{fig:1-ex2}. Let $\mathbf{a}_1=(2,5),\mathbf{a}_2=(3,2),\mathbf{a}_3=(5,0)\in\ZZ_{\geq 0}^{2}$ be the exponents of the generators of $I.$ We first note that $\mathcal{R}_{e_{1}^{*}}(I)$
is empty, because $\mathbf{a}_{1}+\alpha=(1,5+\ell)\notin\supp(I),$ for any $\alpha=(-1,\ell),$ where $\ell\in\ZZ_{\geq 0}.$ 

We now describe $\mathcal{R}_{e_{2}^{*}}(I).$ Note that $\mathcal{R}_{e_{2}^{*}}(I)$ is the set of elements $\alpha=(\ell,-1),$ where $\ell\in\ZZ_{\geq 0},$ such that $\alpha+\mathbf{a}_1=(2+\ell,4),\alpha+\mathbf{a}_2=(3+\ell,1)\in\supp(I),$ because $e_{2}^{*}(\mathbf{a}_{3})=0.$ Thus $\mathcal{R}_{e_{2}^{*}}(I)=\{(\ell,-1)\in \ZZ^{2}\mid \ell\geq 2\}.$ Hence, we have
$$\mathfrak{s}(I)=\bigoplus_{\alpha\in\mathcal{R}_{e_{2}^{*}}(I)}\mathfrak{s}_\alpha=\bigoplus_{\ell=2}^\infty \KK\cdot x^\ell\frac{d}{dy}=\left(\bigoplus_{\ell=2}^\infty \KK\cdot x^\ell\right)\frac{d}{dy}=x^2\KK[x]\cdot\frac{d}{dy}.$$
The set $\mathcal{R}(I)=\mathcal{R}_{e_{2}^{*}}(I)$ is given by the red dots in \cref{fig:2-ex2} represents the degrees of outer derivations $\partial\in\Der_{I}(\KK[x,y])$. Finally, we observe that all red dots represent trivial derivations on $\KK[x,y]/I,$ except those of degree $\alpha\in \{(2,-1),(3,-1),(4,-1)\}$.

\begin{figure}[H]
\begin{minipage}[c]{0.45\textwidth}
\centering
\begin{picture}(100,95)
\definecolor{gray1}{gray}{0.7}
\definecolor{gray2}{gray}{0.85}
\definecolor{green}{RGB}{0,124,0}

\textcolor{gray2}{\put(0,20){\vector(1,0){90}}}

\textcolor{gray2}{\put(10,10){\vector(0,1){88}}}

\put(0,10){\textcolor{gray1}{\circle*{3}}}
\put(10,10){\textcolor{gray1}{\circle*{3}}}
\put(20,10){\textcolor{gray1}{\circle*{3}}} 
\put(30,10){\textcolor{gray1}{\circle*{3}}} 
\put(40,10){\textcolor{gray1}{\circle*{3}}} 
\put(50,10){\textcolor{gray1}{\circle*{3}}} 
\put(60,10){\textcolor{gray1}{\circle*{3}}} 
\put(70,10){\textcolor{gray1}{\circle*{3}}} 
\put(80,10){\textcolor{gray1}{\circle*{3}}}

\put(0,20){\textcolor{gray1}{\circle*{3}}}
\put(10,20){\circle{3}} 
\put(20,20){\circle{3}} 
\put(30,20){\circle{3}} 
\put(40,20){\circle{3}}
\put(50,20){\circle{3}}
\put(60,20){\circle*{3}}
\put(70,20){\circle*{3}}
\put(80,20){\circle*{3}}

\put(0,30){\textcolor{gray1}{\circle*{3}}}
\put(10,30){{\circle{3}}}
\put(20,30){\circle{3}} 
\put(30,30){\circle{3}} 
\put(40,30){\circle{3}}
\put(50,30){\circle{3}}
\put(60,30){{\circle*{3}}} 
\put(70,30){\circle*{3}}
\put(80,30){\circle*{3}}

\put(0,40){\textcolor{gray1}{\circle*{3}}}
\put(10,40){{\circle{3}}}
\put(20,40){{\circle{3}}}
\put(30,40){\circle{3}} 
\put(40,40){{\circle*{3}}}
\put(50,40){{\circle*{3}}}
\put(60,40){\circle*{3}} 
\put(70,40){\circle*{3}}
\put(80,40){\circle*{3}}

\put(0,50){\textcolor{gray1}{\circle*{3}}}
\put(10,50){{\circle{3}}}
\put(20,50){{\circle{3}}}
\put(30,50){{\circle{3}}}
\put(40,50){\circle*{3}}
\put(50,50){\circle*{3}}
\put(60,50){\circle*{3}} 
\put(70,50){\circle*{3}} 
\put(80,50){\circle*{3}}

\put(0,60){\textcolor{gray1}{\circle*{3}}}
\put(10,60){{\circle{3}}}
\put(20,60){{\circle{3}}}
\put(30,60){{\circle{3}}}
\put(40,60){\circle*{3}}
\put(50,60){\circle*{3}}
\put(60,60){\circle*{3}} 
\put(70,60){\circle*{3}}
\put(80,60){\circle*{3}}

\put(0,70){\textcolor{gray1}{\circle*{3}}}
\put(10,70){{\circle{3}}}
\put(20,70){{\circle{3}}}
\put(30,70){{\circle*{3}}}
\put(40,70){{\circle*{3}}}
\put(50,70){\circle*{3}}
\put(60,70){\circle*{3}} 
\put(70,70){\circle*{3}} 
\put(80,70){\circle*{3}}

\put(0,80){\textcolor{gray1}{\circle*{3}}}
\put(10,80){{\circle{3}}}
\put(20,80){{\circle{3}}}
\put(30,80){{\circle*{3}}}
\put(40,80){{\circle*{3}}}
\put(50,80){{\circle*{3}}}
\put(60,80){\circle*{3}} 
\put(70,80){\circle*{3}} 
\put(80,80){\circle*{3}}

\put(0,90){\textcolor{gray1}{\circle*{3}}}
\put(10,90){{\circle{3}}}
\put(20,90){{\circle{3}}}
\put(30,90){{\circle*{3}}}
\put(40,90){{\circle*{3}}}
\put(50,90){{\circle*{3}}}
\put(60,90){{\circle*{3}}}
\put(70,90){\circle*{3}} 
\put(80,90){\circle*{3}} 
\end{picture}
\centering
\caption{$I=(x^2y^5,x^3y^2,x^5)$}\label{fig:1-ex2}

\end{minipage}
 \hfill
\begin{minipage}[c]{0.48\textwidth}
\centering
\begin{picture}(100,90)
\definecolor{gray1}{gray}{0.7}
\definecolor{gray2}{gray}{0.85}
\definecolor{green}{RGB}{0,124,0}

\textcolor{gray2}{\put(0,20){\vector(1,0){90}}}

\textcolor{gray2}{\put(10,10){\vector(0,1){88}}}

\put(0,10){\textcolor{gray1}{\circle*{3}}}
\put(10,10){\textcolor{gray1}{\circle*{3}}}
\put(20,10){\textcolor{gray1}{\circle*{3}}} 
\put(30,10){\textcolor{red}{\circle*{3}}} 
\put(40,10){\textcolor{red}{\circle*{3}}} 
\put(50,10){\textcolor{red}{\circle*{3}}} 
\put(60,10){\textcolor{red}{\circle*{3}}} 
\put(70,10){\textcolor{red}{\circle*{3}}} 
\put(80,10){\textcolor{red}{\circle*{3}}}

\put(0,20){\textcolor{gray1}{\circle*{3}}}
\put(10,20){\circle{3}} 
\put(20,20){\circle{3}} 
\put(30,20){\circle{3}} 
\put(40,20){\circle{3}}
\put(50,20){\circle{3}}
\put(60,20){\circle*{3}}
\put(70,20){\circle*{3}}
\put(80,20){\circle*{3}}

\put(0,30){\textcolor{gray1}{\circle*{3}}}
\put(10,30){{\circle{3}}}
\put(20,30){\circle{3}} 
\put(30,30){\circle{3}} 
\put(40,30){\circle{3}}
\put(50,30){\circle{3}}
\put(60,30){{\circle*{3}}} 
\put(70,30){\circle*{3}}
\put(80,30){\circle*{3}}

\put(0,40){\textcolor{gray1}{\circle*{3}}}
\put(10,40){{\circle{3}}}
\put(20,40){{\circle{3}}}
\put(30,40){\circle{3}} 
\put(40,40){{\circle*{3}}}
\put(50,40){{\circle*{3}}}
\put(60,40){\circle*{3}} 
\put(70,40){\circle*{3}}
\put(80,40){\circle*{3}}

\put(0,50){\textcolor{gray1}{\circle*{3}}}
\put(10,50){{\circle{3}}}
\put(20,50){{\circle{3}}}
\put(30,50){{\circle{3}}}
\put(40,50){\circle*{3}}
\put(50,50){\circle*{3}}
\put(60,50){\circle*{3}} 
\put(70,50){\circle*{3}} 
\put(80,50){\circle*{3}}

\put(0,60){\textcolor{gray1}{\circle*{3}}}
\put(10,60){{\circle{3}}}
\put(20,60){{\circle{3}}}
\put(30,60){{\circle{3}}}
\put(40,60){\circle*{3}}
\put(50,60){\circle*{3}}
\put(60,60){\circle*{3}} 
\put(70,60){\circle*{3}}
\put(80,60){\circle*{3}}

\put(0,70){\textcolor{gray1}{\circle*{3}}}
\put(10,70){{\circle{3}}}
\put(20,70){{\circle{3}}}
\put(30,70){{\circle*{3}}}
\put(40,70){{\circle*{3}}}
\put(50,70){\circle*{3}}
\put(60,70){\circle*{3}} 
\put(70,70){\circle*{3}} 
\put(80,70){\circle*{3}}

\put(0,80){\textcolor{gray1}{\circle*{3}}}
\put(10,80){{\circle{3}}}
\put(20,80){{\circle{3}}}
\put(30,80){{\circle*{3}}}
\put(40,80){{\circle*{3}}}
\put(50,80){{\circle*{3}}}
\put(60,80){\circle*{3}} 
\put(70,80){\circle*{3}} 
\put(80,80){\circle*{3}}

\put(0,90){\textcolor{gray1}{\circle*{3}}}
\put(10,90){{\circle{3}}}
\put(20,90){{\circle{3}}}
\put(30,90){{\circle*{3}}}
\put(40,90){{\circle*{3}}}
\put(50,90){{\circle*{3}}}
\put(60,90){{\circle*{3}}}
\put(70,90){\circle*{3}} 
\put(80,90){\circle*{3}}

\end{picture}
\centering

\caption{$\mathcal{R}(I)=\{(\ell,-1)\mid \ell \geq 2\}$}\label{fig:2-ex2}
\end{minipage}

\end{figure}

\end{example}

\section{Derivations and Monomial Ideals}\label{section 2}

Let $S$ be an affine semigroup and let $I$ be a monomial ideal of $\KK[S].$ Consider the natural projection map 
\begin{align} \label{projection}
 \pi\colon \Der_I(\KK[S]) \to \Der(\KK[S]/I),\quad \partial \mapsto \overline{\partial} \quad \text{defined by} \quad \overline{\partial}(\overline{a}) = \overline{\partial(a)}.
\end{align}
This map $\pi$ is not always surjective. Indeed, Kraft \cite[Example~5]{Kr2017vectorfields}, demonstrates non-surjectivity in an example  where $S$ is non-saturated and $\KK[S]$ is isomorphic to the coordinate ring of the affine variety in $\AF^3$ defined by the equation $x^2z - y^2 = 0$, commonly known as Whitney's Umbrella. Below, we provide an example better suited to our context where $S$ is saturated.

\begin{example}
 Let $M = \ZZ^2$ and let $S$ be the semigroup generated by $\mu_1 = (1,0)$, $\mu_2 = (1,1)$, and $\mu_3=(1,2)$. Furthermore, $X = \spec \KK[S]$ is isomorphic to the variety in the affine space $\AF^3$ defined by the equation $xz - y^2 = 0$, by setting $x = \XX^{\mu_1}$, $y = \XX^{\mu_2}$, and $z = \XX^{\mu_3}.$ It follows from \cite[Lemma~2.4]{liendo2010affine} that $\mathcal{R}(S) = \{(\ell, -1) \in \ZZ^2 \mid \ell \in \ZZ_{\geq 0}\}\cup\{(0,1)+\ell(1,2)\in \ZZ^2\mid\ell\in\ZZ_{\geq 0}\},$ represented by the green points in \cref{fig:1-ex3-}. Let $I \subset \KK[S]$ be the monomial ideal generated by $\{\mathbf{x}^{\mathbf{a}_1},\dots,\mathbf{x}^{\mathbf{a}_5}\},$ where $\mathbf{a}_1 = (2,0)$, $\mathbf{a}_2 = (2,1)$, $\mathbf{a}_3 = (2,2)$, $\mathbf{a}_4 = (2,3)$ and $\mathbf{a}_5 = (2,4)$, that is $I = (x^2,x^2y,x^2y^2,x^2y^3,x^2y^4),$ shown in \cref{fig:2-ex3-}. We define the homogeneous derivation $\partial\colon \mathbf{k}[S]/I\to \mathbf{k}[S]/I,$ given by $\partial(\overline{x}\overline{y}^2)=\overline{x}$ and $\partial(\overline{x})=\partial(\overline{xy})=0.$ Note that $\partial$ is a homogeneous derivation on $\KK[S]/I$ of degree $(0,-2).$ Thus, $\pi$ cannot be surjective because $(0,-2) \notin \mathcal{R}(S)$.

\begin{figure}[H]
\begin{minipage}[c]{0.49\textwidth}
\centering
\begin{picture}(100,95)
\definecolor{gray1}{gray}{0.7}
\definecolor{gray2}{gray}{0.85}
\definecolor{green}{RGB}{0,124,0}

\textcolor{gray2}{\put(0,40){\vector(1,0){90}}}

\textcolor{gray2}{\put(10,10){\vector(0,1){88}}}

\put(0,10){\textcolor{gray1}{\circle*{3}}}
\put(10,10){\textcolor{gray1}{\circle*{3}}}
\put(20,10){\textcolor{gray1}{\circle*{3}}} 
\put(30,10){\textcolor{gray1}{\circle*{3}}} 
\put(40,10){\textcolor{gray1}{\circle*{3}}} 
\put(50,10){\textcolor{gray1}{\circle*{3}}} 
\put(60,10){\textcolor{gray1}{\circle*{3}}} 
\put(70,10){\textcolor{gray1}{\circle*{3}}} 
\put(80,10){\textcolor{gray1}{\circle*{3}}}

\put(0,20){\textcolor{gray1}{\circle*{3}}}
\put(10,20){\textcolor{gray1}{\circle*{3}}} 
\put(20,20){\textcolor{gray1}{\circle*{3}}} 
\put(30,20){\textcolor{gray1}{\circle*{3}}} 
\put(40,20){\textcolor{gray1}{\circle*{3}}}
\put(50,20){\textcolor{gray1}{\circle*{3}}}
\put(60,20){\textcolor{gray1}{\circle*{3}}}
\put(70,20){\textcolor{gray1}{\circle*{3}}}
\put(80,20){\textcolor{gray1}{\circle*{3}}}

\put(0,30){\textcolor{gray1}{\circle*{3}}}
\put(10,30){\textcolor{green}{\circle*{3}}}
\put(20,30){\textcolor{green}{\circle*{3}}}
\put(30,30){\textcolor{green}{\circle*{3}}}
\put(40,30){\textcolor{green}{\circle*{3}}}
\put(50,30){\textcolor{green}{\circle*{3}}}
\put(60,30){\textcolor{green}{\circle*{3}}} 
\put(70,30){\textcolor{green}{\circle*{3}}}
\put(80,30){\textcolor{green}{\circle*{3}}}

\put(0,40){\textcolor{gray1}{\circle*{3}}}
\put(10,40){{\circle{3}}}
\put(20,40){{\circle{3}}}
\put(30,40){\circle{3}} 
\put(40,40){{\circle{3}}}
\put(50,40){{\circle{3}}}
\put(60,40){\circle{3}} 
\put(70,40){\circle{3}}
\put(80,40){\circle{3}}

\put(0,50){\textcolor{gray1}{\circle*{3}}}
\put(10,50){\textcolor{green}{\circle*{3}}}
\put(20,50){{\circle{3}}}
\put(30,50){{\circle{3}}}
\put(40,50){\circle{3}}
\put(50,50){\circle{3}}
\put(60,50){\circle{3}} 
\put(70,50){\circle{3}} 
\put(80,50){\circle{3}}

\put(0,60){\textcolor{gray1}{\circle*{3}}}
\put(10,60){\textcolor{gray1}{\circle*{3}}}
\put(20,60){{\circle{3}}}
\put(30,60){{\circle{3}}}
\put(40,60){\circle{3}}
\put(50,60){\circle{3}}
\put(60,60){\circle{3}} 
\put(70,60){\circle{3}}
\put(80,60){\circle{3}}

\put(0,70){\textcolor{gray1}{\circle*{3}}}
\put(10,70){\textcolor{gray1}{\circle*{3}}}
\put(20,70){\textcolor{green}{\circle*{3}}}
\put(30,70){{\circle{3}}}
\put(40,70){{\circle{3}}}
\put(50,70){\circle{3}}
\put(60,70){\circle{3}} 
\put(70,70){\circle{3}} 
\put(80,70){\circle{3}}

\put(0,80){\textcolor{gray1}{\circle*{3}}}
\put(10,80){\textcolor{gray1}{\circle*{3}}}
\put(20,80){\textcolor{gray1}{\circle*{3}}}
\put(30,80){{\circle{3}}}
\put(40,80){{\circle{3}}}
\put(50,80){{\circle{3}}}
\put(60,80){\circle{3}} 
\put(70,80){\circle{3}} 
\put(80,80){\circle{3}}

\put(0,90){\textcolor{gray1}{\circle*{3}}}
\put(10,90){\textcolor{gray1}{\circle*{3}}}
\put(20,90){\textcolor{gray1}{\circle*{3}}}
\put(30,90){\textcolor{green}{\circle*{3}}}
\put(40,90){{\circle{3}}}
\put(50,90){{\circle{3}}}
\put(60,90){{\circle{3}}}
\put(70,90){\circle{3}} 
\put(80,90){\circle{3}}

\end{picture}
\centering
\caption{ The semigroup $S$}\label{fig:1-ex3-}
\end{minipage}
 \hfill
\begin{minipage}[c]{0.49\textwidth}
\centering
\begin{picture}(100,95)
\definecolor{gray1}{gray}{0.7}
\definecolor{gray2}{gray}{0.85}
\definecolor{green}{RGB}{0,124,0}

\textcolor{gray2}{\put(0,40){\vector(1,0){90}}}

\textcolor{gray2}{\put(10,10){\vector(0,1){88}}}

\put(0,10){\textcolor{gray1}{\circle*{3}}}
\put(10,10){\textcolor{gray1}{\circle*{3}}}
\put(20,10){\textcolor{gray1}{\circle*{3}}} 
\put(30,10){\textcolor{gray1}{\circle*{3}}} 
\put(40,10){\textcolor{gray1}{\circle*{3}}} 
\put(50,10){\textcolor{gray1}{\circle*{3}}} 
\put(60,10){\textcolor{gray1}{\circle*{3}}} 
\put(70,10){\textcolor{gray1}{\circle*{3}}} 
\put(80,10){\textcolor{gray1}{\circle*{3}}}

\put(0,20){\textcolor{gray1}{\circle*{3}}}
\put(10,20){\textcolor{green}{\circle*{3}}} 
\put(20,20){\textcolor{gray1}{\circle*{3}}} 
\put(30,20){\textcolor{gray1}{\circle*{3}}} 
\put(40,20){\textcolor{gray1}{\circle*{3}}}
\put(50,20){\textcolor{gray1}{\circle*{3}}}
\put(60,20){\textcolor{gray1}{\circle*{3}}}
\put(70,20){\textcolor{gray1}{\circle*{3}}}
\put(80,20){\textcolor{gray1}{\circle*{3}}}

\put(0,30){\textcolor{gray1}{\circle*{3}}}
\put(10,30){\textcolor{gray1}{\circle*{3}}}
\put(20,30){\textcolor{gray1}{\circle*{3}}}
\put(30,30){\textcolor{gray1}{\circle*{3}}}
\put(40,30){\textcolor{gray1}{\circle*{3}}}
\put(50,30){\textcolor{gray1}{\circle*{3}}}
\put(60,30){\textcolor{gray1}{\circle*{3}}} 
\put(70,30){\textcolor{gray1}{\circle*{3}}}
\put(80,30){\textcolor{gray1}{\circle*{3}}}

\put(0,40){\textcolor{gray1}{\circle*{3}}}
\put(10,40){{\circle{3}}}
\put(20,40){{\circle{3}}}
\put(30,40){\circle*{3}} 
\put(40,40){{\circle*{3}}}
\put(50,40){{\circle*{3}}}
\put(60,40){\circle*{3}} 
\put(70,40){\circle*{3}}
\put(80,40){\circle*{3}}

\put(0,50){\textcolor{gray1}{\circle*{3}}}
\put(10,50){\textcolor{gray1}{\circle*{3}}}
\put(20,50){{\circle{3}}}
\put(30,50){{\circle*{3}}}
\put(40,50){\circle*{3}}
\put(50,50){\circle*{3}}
\put(60,50){\circle*{3}} 
\put(70,50){\circle*{3}} 
\put(80,50){\circle*{3}}

\put(0,60){\textcolor{gray1}{\circle*{3}}}
\put(10,60){\textcolor{gray1}{\circle*{3}}}
\put(20,60){{\circle{3}}}
\put(30,60){{\circle*{3}}}
\put(40,60){\circle*{3}}
\put(50,60){\circle*{3}}
\put(60,60){\circle*{3}} 
\put(70,60){\circle*{3}}
\put(80,60){\circle*{3}}

\put(0,70){\textcolor{gray1}{\circle*{3}}}
\put(10,70){\textcolor{gray1}{\circle*{3}}}
\put(20,70){\textcolor{gray1}{\circle*{3}}}
\put(30,70){{\circle*{3}}}
\put(40,70){{\circle*{3}}}
\put(50,70){\circle*{3}}
\put(60,70){\circle*{3}} 
\put(70,70){\circle*{3}} 
\put(80,70){\circle*{3}}

\put(0,80){\textcolor{gray1}{\circle*{3}}}
\put(10,80){\textcolor{gray1}{\circle*{3}}}
\put(20,80){\textcolor{gray1}{\circle*{3}}}
\put(30,80){{\circle*{3}}}
\put(40,80){{\circle*{3}}}
\put(50,80){{\circle*{3}}}
\put(60,80){\circle*{3}} 
\put(70,80){\circle*{3}} 
\put(80,80){\circle*{3}}

\put(0,90){\textcolor{gray1}{\circle*{3}}}
\put(10,90){\textcolor{gray1}{\circle*{3}}}
\put(20,90){\textcolor{gray1}{\circle*{3}}}
\put(30,90){\textcolor{gray1}{\circle*{3}}}
\put(40,90){{\circle*{3}}}
\put(50,90){{\circle*{3}}}
\put(60,90){{\circle*{3}}}
\put(70,90){\circle*{3}} 
\put(80,90){\circle*{3}}

\end{picture}
\centering
\caption{The ideal $I$}\label{fig:2-ex3-}
\end{minipage}
\end{figure}
 
\end{example}

Inspired by the above example, we present the following theorem, which is the aim of this section. In this theorem, we show that non-surjectivity can occur for any saturated affine semigroup that is not isomorphic to the first octant $\ZZ^n_{\geq0}$. Recall that, unless stated otherwise, all our semigroups are affine, saturated, pointed and minimally embedded in $M$.

\begin{theorem} \label{th:surjective derivatio}
Let $S$ be a pointed saturated affine semigroup. The natural map 
$$\pi\colon \Der_I(\KK[S]) \to \Der(\KK[S]/I)$$ 
is surjective for every monomial ideal $I$ if and only if $S \cong \ZZ^n_{\geq 0}$.
\end{theorem}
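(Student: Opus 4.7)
The plan is to handle both implications. For the easy direction $(\Leftarrow)$, assume $S = \ZZ^n_{\geq 0}$ and let $I \subset \KK[S] = \KK[x_1,\dots,x_n]$ be any monomial ideal. Given $\overline{\partial} \in \Der(\KK[S]/I)$, I would choose arbitrary lifts $f_i \in \KK[S]$ of $\overline{\partial}(\overline{x_i})$ and extend by Leibniz to the derivation $\partial \in \Der(\KK[S])$ with $\partial(x_i) = f_i$. The only condition to verify is $\partial(I) \subseteq I$; this holds for each generator $\XX^{\mathbf{a}_k}$ of $I$ because $\overline{\partial(\XX^{\mathbf{a}_k})} = \overline{\partial}(\overline{\XX^{\mathbf{a}_k}}) = 0$ in $\KK[S]/I$, forcing $\partial(\XX^{\mathbf{a}_k}) \in I$. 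The only ingredient here is the freeness of the polynomial ring.

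For the nontrivial direction $(\Rightarrow)$, suppose $S \not\simeq \ZZ^n_{\geq 0}$. Under the standing hypotheses (saturated, pointed, minimally embedded), this is equivalent to $S$ being non-simplicial, i.e., $|S(1)| > n$. The strategy is to exhibit a monomial ideal $I$ together with a nonzero homogeneous derivation of $\KK[S]/I$ of some degree $\alpha \notin S \cup \mathcal{R}$: by \cref{decomposition-no-I} no homogeneous derivation of $\KK[S]$ has such a degree, so no lift can exist. Concretely, I would pick a ray $v_1 \in S(1)$ (specified by the lemma below), let $H(S)$ denote the Hilbert basis of $S$, and set $I = (\XX^\mu : \mu \in H(S) \setminus \{v_1\})$. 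Because $\ZZ_{\geq 0} v_1$ is a face of $S$ and hence closed under semigroup decomposition, a short check on Hilbert-basis expansions yields $\supp(I) = S \setminus \ZZ_{\geq 0} v_1$, so $\KK[S]/I \cong \KK[\overline{\XX^{v_1}}]$ is a polynomial ring in one variable. The derivation $\overline{\partial} := d/d(\overline{\XX^{v_1}})$ is then nonzero and homogeneous of degree $-v_1$.

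The main obstacle is selecting $v_1$ so that $-v_1 \notin S \cup \mathcal{R}$. Pointedness delivers $-v_1 \notin S$ for free, so the content lies in the following lemma: \emph{if $S$ is non-simplicial, then there exists $v_1 \in S(1)$ with $-v_1 \notin \mathcal{R}$.} To prove it, I would argue by contradiction. If every ray $v \in S(1)$ satisfies $-v \in \mathcal{R}$, then for each $v$ there is a unique $\rho_v \in S^\vee(1)$ with $\rho_v(v) = 1$ and $\rho'(v) = 0$ for every $\rho' \neq \rho_v$. Partition $S(1)$ into blocks $A_\rho = \{v \in S(1) : \rho_v = \rho\}$; I claim $|A_\rho| = 1$. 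Indeed, if $v, w \in A_\rho$ were distinct, then $\rho'(v - w) = 0$ for every $\rho' \in S^\vee(1)$, placing $v - w$ in the lineality space of $\omega_S$, which is trivial by pointedness, forcing $v = w$. It follows that $\rho_{v_i}(v_j) = \delta_{ij}$, so the rays of $S$ are $\QQ$-linearly independent in $M_\QQ$; combined with the fact that $S(1)$ spans $M_\RR$, this yields $|S(1)| = n$, contradicting non-simpliciality.

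With $v_1$ fixed by the lemma, the proof concludes as follows. Any hypothetical lift $\partial \in \Der_I(\KK[S])$ of $\overline{\partial}$ decomposes as $\partial = \sum_\beta \partial_\beta$ into $M$-homogeneous components by \cite[Proposition 2.1]{DL24}. Since $\pi$ respects the $M$-grading, the equality $\overline{\partial} = \pi(\partial) = \sum_\beta \pi(\partial_\beta)$, together with the fact that $\overline{\partial}$ is homogeneous of degree $-v_1$, forces $\pi(\partial_{-v_1}) = \overline{\partial}$ and $\pi(\partial_\beta) = 0$ for $\beta \neq -v_1$. But \cref{decomposition-no-I} forbids any nonzero homogeneous derivation of $\KK[S]$ of degree $-v_1 \notin S \cup \mathcal{R}$, so $\partial_{-v_1} = 0$ and $\overline{\partial} = 0$, a contradiction. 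Hence $\pi$ is not surjective for this choice of $I$, completing the proof.
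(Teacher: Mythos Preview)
Your argument for $(\Leftarrow)$ is correct and matches the paper's \cref{lemma:affine-space}. The gap is in the forward direction: the assertion that, under the standing hypotheses, ``$S\not\simeq\ZZ^n_{\ge 0}$ is equivalent to $S$ being non-simplicial'' is false. A saturated pointed minimally embedded semigroup can be simplicial without being isomorphic to $\ZZ^n_{\ge 0}$; for instance, take $M=\ZZ^2$ and $S=\omega\cap M$ with $\omega=\cone((1,0),(1,2))$. Here $|S(1)|=2=n$ but the Hilbert basis is $\{(1,0),(1,1),(1,2)\}$, so $S\not\simeq\ZZ^2_{\ge 0}$. The paper treats exactly this simplicial case separately (see \cref{lemma proper subset}); your proof as written leaves it uncovered, since your lemma is stated only for non-simplicial $S$ and the contradiction you reach (``$|S(1)|=n$'') is vacuous there.

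The good news is that your own argument nearly closes the gap. Once you obtain $\rho_{v_i}(v_j)=\delta_{ij}$ with $\{v_i\}=S(1)$ and $\{\rho_i\}\subset S^\vee(1)\subset N$, you should not stop at $\QQ$-linear independence: the identity matrix $(\rho_i(v_j))$ forces $\{v_i\}$ to be a $\ZZ$-basis of $M$ (this is precisely the content of \cref{lemma: Det(A)}), and then saturation gives $S=\omega_S\cap M\simeq\ZZ^n_{\ge 0}$. With that stronger conclusion your lemma becomes ``if $S\not\simeq\ZZ^n_{\ge 0}$ then some $v_1\in S(1)$ has $-v_1\notin\mathcal{R}$'', and the rest of your construction (the ideal $I=(\XX^\mu:\mu\in H(S)\setminus\{v_1\})$ and the derivation $d/d\overline{\XX^{v_1}}$) goes through unchanged in both the simplicial and non-simplicial cases. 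This repaired route is genuinely different from the paper's: the paper uses the square-zero ideal $I_{\mathcal H}$ and builds ad hoc derivations of degree $\mu_2-\mu_1$ or $\mu_1-\mu$, splitting into two cases, whereas your single ideal collapses $\KK[S]/I$ to $\KK[t]$ and exploits the standard derivative $d/dt$, which is arguably cleaner.
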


If $\rank M = 1$, then the theorem holds, since every pointed saturated semigroup is isomorphic to $\ZZ_{\ge 0}$ in this case. Hence, in the sequel we assume $\rank M > 1$.

We divide the proof into several lemmas. Let $S$ be a saturated affine semigroup. We say that $m \in S \setminus \{0\}$ is irreducible if $m = m_1 + m_2$ implies $m_1 = 0$ or $m_2 = 0.$ The set $\mathcal{H}$ of all irreducible elements of $S$ is called the Hilbert basis of $S$. Since $S$ is pointed, the Hilbert basis $\mathcal{H}$ is the unique minimal generating set of $S$ as a semigroup. Let $\mathcal{H} = \{\mu_1, \dots, \mu_l\} \subset S$ be the Hilbert basis of $S$. In all cases where $S$ is different from the first octant $\ZZ_{\geq 0}^n$, we will prove that there exists a non-liftable derivation in $\KK[S]/I_{\mathcal{H}}$, where $I_{\mathcal{H}}$ is the monomial ideal 
$$I_{\mathcal{H}} = \bigoplus_{m \in S \setminus \mathcal{H}} \KK \cdot \XX^m\,.$$

Let $\rho \in S^\vee(1)$. Note that there exist $m_1, m_2 \in \mathcal{H}$ such that $\rho(m_1) = 0$ and $\rho(m_2) = 1$, since $\mathcal{H}$ generates $S$.

\begin{remark}
If the natural map $\pi\colon\operatorname{Der}_I(\mathbf{k}[S])\to \operatorname{Der}(\mathbf{k}[S]/I)$ is surjective, then every homogeneous derivation in $\operatorname{Der}(\mathbf{k}[S]/I)$
 has a homogeneous preimage with the same degree in $\operatorname{Der}_I(\mathbf{k}[S])$
\end{remark}

\begin{lemma}\label{lemma: nor sujective l-index}
Let $S$ be a semigroup. If there exists $\rho\in S^\vee(1)$ such that $\{0,1\}$ is a proper subset of $\rho(\mathcal{H})$, then the natural map $\pi\colon \Der_{I_\mathcal{H}}(\KK[S])\to\Der(\KK[S]/I_\mathcal{H})$ is not surjective.
\end{lemma}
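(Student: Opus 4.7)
The plan is to exhibit an explicit derivation of $B:=\KK[S]/I_{\mathcal{H}}$ whose homogeneous degree $\alpha\in M$ lies outside both $S$ and $\mathcal{R}$, so that by \cref{decomposition-derivations} no element of $\Der_{I_{\mathcal{H}}}(\KK[S])$ can project onto it.

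First I would describe $B$ concretely. Writing $\mathcal{H}=\{\mu_1,\dots,\mu_l\}$, the classes $1,\quot{\XX}^{\mu_1},\dots,\quot{\XX}^{\mu_l}$ form a $\KK$-basis of $B$. For any $\mu_i,\mu_j\in\mathcal{H}$, the sum $\mu_i+\mu_j$ is a sum of two nonzero elements of $S$ and so is not irreducible, hence $\mu_i+\mu_j\in S\setminus(\mathcal{H}\cup\{0\})$ and $\quot{\XX}^{\mu_i}\cdot\quot{\XX}^{\mu_j}=0$ in $B$. Consequently $B$ is a square-zero extension of $\KK$ by the subspace $V=\bigoplus_i\KK\cdot\quot{\XX}^{\mu_i}$, and a direct application of the Leibniz rule shows that any $\KK$-linear map $V\to V$, extended by sending $1$ to $0$, defines a derivation of $B$: when both factors lie in $V$, each term of the Leibniz identity already lands in $V\cdot V=0$.

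Using the hypothesis I would next pick $\mu_0,\mu_*\in\mathcal{H}$ with $\rho(\mu_0)=0$ and $\rho(\mu_*)=d$ for some integer $d\geq 2$, and define $\overline{\partial}\in\Der(B)$ by $\overline{\partial}(\quot{\XX}^{\mu_*})=\quot{\XX}^{\mu_0}$ and $\overline{\partial}(\quot{\XX}^{\mu_i})=0$ for every $\mu_i\neq\mu_*$. By the previous paragraph this is a nonzero derivation of $B$, homogeneous of degree $\alpha=\mu_0-\mu_*$, with $\rho(\alpha)=-d\leq -2$. Since $\rho$ is nonnegative on $S$, this already gives $\alpha\notin S$. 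Moreover, by \eqref{eq:R}, any element of $\mathcal{R}$ has value $-1$ on exactly one $\rho'\in S^\vee(1)$ and values $\geq 0$ on all the others; but $\rho(\alpha)\leq -2$ rules out both $\rho(\alpha)=-1$ and $\rho(\alpha)\geq 0$, so $\alpha\notin\mathcal{R}$.

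Finally, I would argue that $\overline{\partial}$ has no preimage under $\pi$. Any candidate lift $\partial\in\Der_{I_{\mathcal{H}}}(\KK[S])$ can be decomposed into its $M$-homogeneous components, each of which again lies in $\Der_{I_{\mathcal{H}}}(\KK[S])$ since $I_{\mathcal{H}}$ is a graded ideal; because $\pi$ respects the $M$-grading, the degree-$\alpha$ component $\partial_\alpha$ must itself project to $\overline{\partial}$. By \cref{decomposition-derivations}, $\Der_{I_{\mathcal{H}}}(\KK[S])=\mathfrak{g}\oplus\mathfrak{s}(I_{\mathcal{H}})$, whose nonzero homogeneous summands only appear in degrees belonging to $S\cup\mathcal{R}(I_{\mathcal{H}})\subseteq S\cup\mathcal{R}$, a set that does not contain $\alpha$. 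The step requiring the most care is the Leibniz check for $\overline{\partial}$, where the square-zero structure of $B$ is essential; the remaining bookkeeping with the grading and the Demazure root conditions is then a direct combinatorial verification.
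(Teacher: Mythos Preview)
Your proof is correct and follows essentially the same route as the paper: both construct the derivation $\overline{\partial}$ sending $\quot{\XX}^{\mu_*}\mapsto\quot{\XX}^{\mu_0}$ with $\rho(\mu_*)\geq 2$ and $\rho(\mu_0)=0$, and both rule out a lift by observing that $\rho(\alpha)\leq -2$ is incompatible with the degree of any homogeneous derivation of $\KK[S]$. Your framing via the square-zero structure of $B$ streamlines the Leibniz verification, and your explicit use of the $M$-grading to reduce to a homogeneous lift is slightly more detailed than the paper's version, but the argument is the same.
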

\begin{proof}
 Let $\mathcal{H}=\{\mu_1,\dots,\mu_l\}$ be the Hilbert basis of $S$. By hypothesis, there exists $\rho\in S^{\vee}(1)$ such that the set $\{0,1\}$ is a proper subset of $\rho(\mathcal{H}).$ Without loss of generality, we assume that $\rho(\mu_1)=\ell>1$ and $\rho(\mu_2)=0$. We define the linear map
$$\overline{\partial}\colon \KK[S]/I_{\mathcal{H}}\to\KK[S]/I_{\mathcal{H}},\quad\mbox{given by}\quad \overline{\partial}(\overline{\XX}^{0})=0,\ \overline{\partial}(\overline{\XX}^{\mu_1})=\overline{\XX}^{\mu_2},\mbox{ and }\overline{\partial}(\overline{\XX}^{\mu_i})=0\mbox{ for all } i>1\,.$$
The map $\overline{\partial}$ is well defined since $\overline{\partial}(I_\mathcal{H})=0$. Moreover, 
it is a derivation. Indeed, the only non-trivial instance of the Leibniz rule is 
$$0=\overline{\partial}(\overline{\XX}^{\mu_1+\mu_i})=\overline{\partial}(\overline{\XX}^{\mu_1})\overline{\XX}^{\mu_i}+\overline{\XX}^{\mu_1}\overline{\partial}(\overline{\XX}^{\mu_i})=\overline{\XX}^{\mu_1}\overline{\partial}(\overline{\XX}^{\mu_i})\,.$$
Note that $\overline{\partial}(\overline{\XX}^{\mu_i})$ is equal to $0$ or $\overline{\XX}^{\mu_2}$. In both cases, the Leibniz rule is verified since $$\overline{\XX}^{\mu_1}\overline{\partial}(\overline{\XX}^{\mu_i})=0\,.$$ 

The derivation $\overline{\partial}$ is homogeneous and its degree is $\deg\overline{\partial}=\alpha=\mu_2-\mu_1$ and $\rho(\alpha)=-\ell<-1$. Assume that there exists $\partial\in \Der(\KK[S])$ homogeneous such that $\pi(\partial)=\overline{\partial}$. The degrees of $\partial$ and $\overline{\partial}$ are both $\alpha$. Since $\rho(\alpha)$ is negative, $\partial$ must be an outer derivation. Finally, by \eqref{eq:R} we have $\rho(\alpha)\geq -1$. This provides a contradiction. Hence, the lifting $\partial$ does not exist. 
\end{proof}

In the proof of \cref{lemma: proper subset general}, we need the following technical lemma.

\begin{lemma}\label{lemma: Det(A)}
 Let $M$ be a free abelian group of rank $n$ and let $N$ be the dual group $N=\homo(M,\ZZ)$. Let $\beta=\{\mu_1,\ldots,\mu_n\}\subset M$ and $\beta'=\{\rho_1,\ldots,\rho_n\}\subset N$ be linearly independent sets. We let $M_\beta$ be the subgroup of $M$ generated by $\beta$ and $N_{\beta'}$ be the subgroup of $N$ generated by $\beta'$. If $A=(a_{ij})$ is the $n\times n$ matrix with $a_{ij}=\rho_i(\mu_j)$, then 
 $$|M/M_\beta|\cdot |N/N_{\beta'}|=|\det(A)|\,.$$

\end{lemma}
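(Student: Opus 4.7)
The plan is to reduce the statement to the well-known fact that, for a free abelian group of rank $n$ and a finite-index sublattice, the index equals the absolute value of the determinant of any change-of-basis matrix between the two bases.

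First I would fix a basis $\{e_1,\dots,e_n\}$ of $M$ and take its dual basis $\{e_1^*,\dots,e_n^*\}$ of $N$, so that $e_i^*(e_j)=\delta_{ij}$. Writing
\[
\mu_j=\sum_{k=1}^n b_{kj}\,e_k \qquad\text{and}\qquad \rho_i=\sum_{k=1}^n c_{ik}\,e_k^*,
\]
I obtain integer matrices $B=(b_{kj})$ and $C=(c_{ik})$ which are the change-of-basis matrices from $\{e_1,\dots,e_n\}$ to $\beta$ and from $\{e_1^*,\dots,e_n^*\}$ to $\beta'$, respectively. Since $\beta$ and $\beta'$ are linearly independent, $B$ and $C$ are invertible over $\QQ$, and by the standard structure theorem for finitely generated abelian groups,
\[
|M/M'|=|\det(B)|\qquad\text{and}\qquad |N/N'|=|\det(C)|.
\]

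Next I would compute the matrix $A$ directly using the duality pairing and bilinearity:
\[
a_{ij}=\rho_i(\mu_j)=\sum_{k,\ell}c_{ik}\,b_{\ell j}\,e_k^*(e_\ell)=\sum_{k}c_{ik}\,b_{kj},
\]
so $A=CB$. Taking determinants and absolute values yields
\[
|\det(A)|=|\det(C)|\cdot|\det(B)|=|N/N'|\cdot|M/M'|,
\]
as desired.

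I do not expect any real obstacle: the only minor point to be careful about is that the identification $|M/M'|=|\det(B)|$ uses that $B$ is the matrix whose columns are the coordinates of the generators of $M'$ in a chosen basis of $M$, and analogously for $C$ and $N$. Everything else is a direct computation of the pairing in coordinates.
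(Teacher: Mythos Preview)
Your proof is correct and follows essentially the same approach as the paper: the paper's proof also fixes a pair of dual bases of $M$ and $N$ and then invokes the standard fact that the index of a finite-index sublattice equals the absolute value of the determinant of the change-of-basis matrix (citing Barvinok's textbook for this). Your write-up simply makes explicit the factorization $A=CB$ that is implicit in the paper's one-line appeal to ``both sides of the duality.''
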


\begin{proof}
 The lemma follows directly by taking dual bases of $M$ and $N$, respectively and applying \cite[Chapter~VII, Section~2, Theorem~2.5]{barvinok} to the subgroups $M_\beta\subset M$ and $N_{\beta'}\subset N$ on both sides of the duality.
\end{proof}

To prove \cref{th:surjective derivatio}, we introduce the following notation that will be convenient in the proof.

\begin{definition}
 Let $S$ be a semigroup minimally embedded in $M$ with $\rank M = n$. We define a complete flag $\mathcal{F}$ of $S$ as a chain of faces $F_i \subseteq S$ such that
 $$\{0\} = F_0 \subset F_1 \subset \ldots \subset F_n = S \quad \text{with} \quad \rank F_i = i\,.$$
 The complete flag $\mathcal{F}$ also defines a chain of reverse inclusions 
 $$S^\vee = F^*_0 \supset F^*_1 \supset \ldots \supset F^*_n = \{0\} \quad \text{with} \quad \rank F^*_i = n - i\,,$$
 where $F^*_i$ is the face of $S^\vee \subset N$ dual to $F_i$, defined as $F^*_i = S^\vee \cap F_i^\perp$.
 
 Given a complete flag, an upper triangular pair of collections of rays is a pair of sets $\beta = \{\mu_1, \ldots, \mu_n\} \subset S(1)$ and $\beta' = \{\rho_1, \ldots, \rho_n\} \subset S^\vee(1)$ such that 
 $$\mu_i \in F_i \setminus F_{i-1} \quad \text{and} \quad \rho_i \in F^*_{i-1} \setminus F^*_i\,,$$
 satisfying 
 \begin{align} \label{eq:matrix-A}
\rho_i(\mu_j) =
\begin{cases}
 0 & \text{if} \quad i > j \\
 * & \text{if} \quad i \leq j
\end{cases} \quad \text{and} \quad \rho_i(\mu_i) > 0, \text{ for all } i\,.
\end{align}
\end{definition}
Where $*$ denotes any element in $\KK$. In the following lemma, we show that an upper triangular pair of collections of rays always exists.

\begin{lemma}\label{lemma: proper subset general}
Let $S$ be a semigroup and let $\mathcal{H}$ be its Hilbert basis. Let $\mathcal{F}$ be a complete flag of $S$. Then, there exists an upper triangular pair of collections of rays $(\beta, \beta')$. Moreover, if $\rho(S(1)) = \{0,1\}$ for all $\rho \in S^\vee(1)$, then $\beta$ and $\beta'$ are $\ZZ$-bases of $M$ and $N$, respectively.
\end{lemma}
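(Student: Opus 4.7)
The plan is to construct $(\beta,\beta')$ inductively along the flag, using the chain of face inclusions to force the upper-triangular pattern. First I pick the $\mu_i$'s: since every face of $S$ is cut out of $S$ by a polyhedral face of $\omega_S$, and polyhedral cones are spanned by their rays, the face $F_i$ (having higher rank than $F_{i-1}$) must contain at least one ray of $S$ not lying in $F_{i-1}$. I let $\mu_i\in S(1)$ be the generator of such a ray, so $\mu_i\in F_i\setminus F_{i-1}$.

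Next I choose $\rho_i\in S^\vee(1)\cap(F^*_{i-1}\setminus F^*_i)$ with $\rho_i(\mu_i)>0$. The face $F^*_{i-1}$ of $S^\vee$ is itself pointed, and its cone is spanned by its rays, which belong to $S^\vee(1)$. If every such ray $\rho$ satisfied $\rho(\mu_i)=0$, the whole face $F^*_{i-1}$ would annihilate $\mu_i$, and biduality for pointed saturated cones would give $\mu_i\in(F^*_{i-1})^*=F_{i-1}$, contradicting the choice of $\mu_i$. So some ray $\rho_i$ of $F^*_{i-1}$ satisfies $\rho_i(\mu_i)>0$; as $\mu_i\in F_i$, this rules out $\rho_i\in F_i^\perp$, hence $\rho_i\notin F^*_i$. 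Upper triangularity is then immediate: for $i>j$ we have $j\leq i-1$, so $\mu_j\in F_j\subseteq F_{i-1}$ while $\rho_i\in F^*_{i-1}\subseteq F_{i-1}^\perp$, whence $\rho_i(\mu_j)=0$. Together with $\rho_i(\mu_i)>0$, this yields \cref{eq:matrix-A}.

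For the moreover part, the hypothesis $\rho(S(1))=\{0,1\}$ applied to $\mu_i\in S(1)$ forces $\rho_i(\mu_i)\in\{0,1\}$, and combined with $\rho_i(\mu_i)>0$ gives $\rho_i(\mu_i)=1$. Hence $A=(\rho_i(\mu_j))$ is upper triangular with unit diagonal, so $\det A=1$. The nonvanishing of $\det A$ also implies that $\beta$ and $\beta'$ are $\QQ$-linearly independent: if $\sum_j c_j\mu_j=0$ then applying each $\rho_i$ gives $Ac=0$, so $c=0$, and dually for $\beta'$. Then \cref{lemma: Det(A)} applies and yields $|M/M'|\cdot|N/N'|=|\det A|=1$, so $M'=M$ and $N'=N$. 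Since $\beta$ and $\beta'$ are linearly independent sets of cardinality $n=\rank M=\rank N$ generating $M$ and $N$ respectively, they are $\ZZ$-bases.

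I expect the main obstacle to be the construction of $\rho_i$: producing, inside the prescribed face $F^*_{i-1}\setminus F^*_i$, a ray of $S^\vee$ that pairs strictly positively with $\mu_i$. This rests on two structural facts about pointed saturated cones — that each face is spanned by its rays, and that the biduality $(F^*)^*=F$ holds at the face level — without which the annihilation argument collapses. Once these are granted, the rest of the proof is just bookkeeping with flag inclusions and a unit-determinant computation.
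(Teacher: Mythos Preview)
Your proof is correct and follows essentially the same approach as the paper: construct $\mu_i$ and $\rho_i$ along the flag, use biduality to guarantee a ray $\rho_i$ in $F^*_{i-1}$ pairing positively with $\mu_i$, read off upper triangularity from the inclusions, and finish with the unit-determinant argument via \cref{lemma: Det(A)}. You actually supply more detail than the paper in two places---explicitly verifying $\rho_i\notin F^*_i$ and spelling out why $\det A\neq 0$ gives linear independence---but the underlying argument is the same.
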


\begin{proof}
Let $S$ be minimally embedded in $M$ with $\rank M = n$. The complete flag $\mathcal{F}$ provides us with the inclusions of faces
$$\{0\} = F_0 \subset F_1 \subset \ldots \subset F_n = S \quad \text{and} \quad S^\vee = F^*_0 \supset F^*_1 \supset \ldots \supset F^*_n = \{0\}\,.$$

We will appropriately choose the sets $\beta = \{\mu_1, \ldots, \mu_n\} \subset S(1)$ and $\beta' = \{\rho_1, \ldots, \rho_n\} \subset S^\vee(1)$ to obtain an upper triangular pair $(\beta, \beta')$. First, we pick $\mu_i$ as any ray in $F_i \setminus F_{i-1}$. Then, we choose $\rho_i$ to be any ray in $F^*_{i-1}$ with $\rho_i(\mu_i) > 0$. Such a ray must exist; otherwise, $\mu_i \in F_{i-1}$, which is a contradiction. This choice satisfies the condition in \eqref{eq:matrix-A}. Indeed, if $i > j$, then $\rho_i$ is a ray in $F^*_{i-1} \subseteq F^*_j$ and $\mu_j$ is a ray in $F_j$, so $\rho_i(\mu_j) = 0$. This proves the first statement of the lemma.

To prove the second statement, we assume, moreover, that $\rho(S(1)) = \{0,1\}$. Under these conditions, $\rho_i(\mu_i) > 0$ in \eqref{eq:matrix-A} implies $\rho_i(\mu_i) = 1$. Let $A = (a_{ij})$ be the $n \times n$ matrix with $a_{ij} = \rho_i(\mu_j)$. By \eqref{eq:matrix-A} and by the fact that $\rho_i(\mu_i) = 1$, we have that $\det(A) = 1$, so $\beta$ and $\beta'$ are linearly independent sets. Moreover, by \cref{lemma: Det(A)}, we have that $|M/M'| = |N/N'| = 1$, where $M'$ and $N'$ are the submodules spanned by $\beta$ and $\beta'$ in $M$ and $N$, respectively. We conclude that $\beta$ and $\beta'$ are $\ZZ$-bases of $M$ and $N$, respectively.
\end{proof}

\begin{corollary}\label{lemma proper subset}
Let $S$ be a simplicial affine semigroup. If $S$ is not isomorphic to $\ZZ_{\geq 0}^n$, then the natural map $\pi\colon \Der_{I_\mathcal{H}}(\KK[S]) \to \Der(\KK[S]/I_\mathcal{H})$ is not surjective.
\end{corollary}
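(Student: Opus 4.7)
The plan is to produce some $\rho \in S^\vee(1)$ satisfying the hypothesis of \cref{lemma: nor sujective l-index}, namely $\{0,1\}\subsetneq \rho(\mathcal{H})$. Since $S$ is simplicial we have $|S(1)|=n$, so in every upper triangular pair $(\beta,\beta')$ produced by \cref{lemma: proper subset general} from a complete flag of $S$ (which exists for simplicial $S$; e.g.\ take $F_i$ to be the face generated by $i$ of the rays) the set $\beta$ must coincide with $S(1)$.

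First, I would argue by contrapositive that some $\rho \in S^\vee(1)$ takes a value $\geq 2$ on a ray. Suppose instead that $\rho(S(1))=\{0,1\}$ for every $\rho \in S^\vee(1)$: the second part of \cref{lemma: proper subset general} then forces $\beta=S(1)$ to be a $\ZZ$-basis of $M$, so by saturation $S=\omega_S\cap M=\ZZ_{\geq 0}\cdot S(1)\simeq \ZZ_{\geq 0}^n$, contradicting our hypothesis. Hence there exists $\rho \in S^\vee(1)$ with $\rho(S(1))=\{0,k\}$ for some $k\geq 2$ (since $\rho$ vanishes on the $n-1$ rays lying on its dual facet and is positive on the remaining one), and already $\{0,k\}\subseteq \rho(\mathcal{H})$.

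It remains to verify $1\in\rho(\mathcal{H})$. Since $\rho\in N$ is primitive, $\rho(M)=\ZZ$, so I would pick $\nu_0\in M$ with $\rho(\nu_0)=1$. Setting $L=\ZZ\mu_1+\cdots+\ZZ\mu_n$, one has $\rho(L)=k\ZZ$, and reducing $\nu_0$ modulo $L$ yields a representative $\nu$ of $\nu_0+L$ in the fundamental parallelotope $P=\{\sum_j t_j\mu_j : 0\leq t_j<1\}$ with $\rho(\nu)\in\{0,\ldots,k-1\}$ and $\rho(\nu)\equiv 1\pmod{k}$, hence $\rho(\nu)=1$. Since $P\subset\omega_S$ and $\nu\in M$, we have $\nu\in\omega_S\cap M=S$. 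Choosing a minimal element $\nu^\ast$ in the (nonempty) set $\{\nu'\in S\mid \rho(\nu')=1\}$ for the well-founded order $\nu'\leq \nu''\iff \nu''-\nu'\in S$, any nontrivial decomposition $\nu^\ast=\nu_1+\nu_2$ in $S\setminus\{0\}$ would satisfy $\rho(\nu_1)=0$ and $\rho(\nu_2)=1$ with $\nu_2<\nu^\ast$, contradicting minimality; hence $\nu^\ast\in\mathcal{H}$ with $\rho(\nu^\ast)=1$. Combined with the previous paragraph this gives $\{0,1\}\subsetneq \rho(\mathcal{H})$, and \cref{lemma: nor sujective l-index} concludes. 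The only delicate step is constructing the irreducible element $\nu^\ast$, which requires both the primitivity of $\rho$ and the well-foundedness of the semigroup order; everything else follows formally from the two preceding lemmas.
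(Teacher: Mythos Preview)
Your proof is correct and follows essentially the same route as the paper: use the contrapositive of the second statement in \cref{lemma: proper subset general} to find $\rho\in S^\vee(1)$ with $\rho(S(1))\neq\{0,1\}$, deduce $\{0,1\}\subsetneq\rho(\mathcal{H})$, and apply \cref{lemma: nor sujective l-index}. The only difference is that you explicitly construct an irreducible element $\nu^\ast$ with $\rho(\nu^\ast)=1$ via the fundamental parallelotope and a minimality argument, whereas the paper simply invokes the remark stated just before \cref{lemma: nor sujective l-index} (that $0,1\in\rho(\mathcal{H})$ for every $\rho\in S^\vee(1)$); your version is thus more self-contained on this point.
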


\begin{proof}
 If $S$ is not isomorphic to $\ZZ_{\geq 0}^n$, then the set $S(1)$ is not a $\ZZ$-basis of $M$. Now, take any complete flag of $S$. According to \cref{lemma: proper subset general}, we can order the sets $S(1)$ and $S^\vee(1)$, both with $n$ elements, to form an upper triangular pair of collections of rays. Furthermore, by the second statement in \cref{lemma: proper subset general}, there exists $\rho \in S^\vee(1)$ such that $\{0,1\}$ is a proper subset of $\rho(S(1)) \subset \rho(\mathcal{H})$, since $S(1)$ is not a $\ZZ$-basis of $M$. Thus, the non-surjectivity of $\pi$ follows from \cref{lemma: nor sujective l-index}.
\end{proof}

In the following lemma, we handle the case where $S$ is not simplicial.

\begin{lemma}\label{lemma: nor sujective non-simplicial}
Let $S$ be a semigroup. If $S$ is not simplicial, then the natural map $\pi\colon \Der_{I_\mathcal{H}}(\KK[S]) \to \Der(\KK[S]/I_\mathcal{H})$ is not surjective.
\end{lemma}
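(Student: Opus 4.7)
My plan is to follow the template of \cref{lemma: nor sujective l-index}: I want to produce two distinct $\mu_1,\mu_2\in\mathcal{H}$ and consider the $M$-homogeneous $\KK$-linear map
\[
\overline{\partial}\colon\KK[S]/I_\mathcal{H}\to\KK[S]/I_\mathcal{H},\qquad \overline{\XX}^{\mu_1}\mapsto\overline{\XX}^{\mu_2},\quad \overline{\XX}^{\mu}\mapsto 0\text{ for }\mu\in\mathcal{H}\setminus\{\mu_1\}.
\]
The Leibniz rule holds automatically because the product of any two Hilbert-basis monomials is zero in $\KK[S]/I_\mathcal{H}$ (if $\mu,\mu'\in\mathcal{H}$, then $\mu+\mu'$ is reducible and hence lies in the support of $I_\mathcal{H}$), so $\overline{\partial}$ is a homogeneous derivation of degree $\alpha=\mu_2-\mu_1$. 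Since $\mu_1,\mu_2$ are distinct irreducible elements, $\alpha\notin S$, and by \cref{decomposition-derivations} any lift must therefore be an outer derivation, which forces $\alpha\in\mathcal{R}$. The whole task accordingly reduces to exhibiting $\mu_1,\mu_2\in\mathcal{H}$ such that $\alpha\notin\mathcal{R}$.

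I first reduce to the regime $\rho(\mathcal{H})\subseteq\{0,1\}$ for every $\rho\in S^\vee(1)$. If instead some $\rho\in S^\vee(1)$ takes a value $\ell\geq 2$ on an element $\mu_1\in\mathcal{H}$, then pairing $\mu_1$ with any $\mu_2\in\mathcal{H}\cap\rho^\perp$ (nonempty because the facet $\rho^\perp\cap\omega_S$ contains at least one ray of $S$) yields $\rho(\alpha)\leq -2$, which precludes $\alpha\in\mathcal{R}$ exactly as in the proof of \cref{lemma: nor sujective l-index}. Under the $\{0,1\}$-assumption one has $\rho(\alpha)\in\{-1,0,1\}$ for every $\rho\in S^\vee(1)$, so $\alpha\notin\mathcal{R}$ becomes the requirement that there exist two distinct rays $\rho,\rho'\in S^\vee(1)$ with $\rho(\alpha)=\rho'(\alpha)=-1$; geometrically, I must find two facets of $\omega_S$ that contain $\mu_2$ but not $\mu_1$.

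The last step is where the non-simpliciality of $S$ enters. By \cref{lemma: proper subset general}, in the $\{0,1\}$-regime I may fix an upper triangular pair $(\beta,\beta')$ with $\beta=\{\widetilde{\mu}_1,\ldots,\widetilde{\mu}_n\}\subset S(1)$ and $\beta'=\{\rho_1,\ldots,\rho_n\}\subset S^\vee(1)$ forming $\ZZ$-bases of $M$ and $N$ respectively. Since $|S(1)|>n$ in the non-simplicial case, there is a ray $\mu^*\in S(1)\setminus\beta$; setting $j^*=\max\{j:\rho_j(\mu^*)=1\}$, both $\mu^*$ and $\widetilde{\mu}_{j^*}$ lie in the flag face $F_{j^*}$ but outside $F_{j^*-1}$, which in particular exhibits $F_{j^*}$ as a non-simplicial face. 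The main obstacle will be to upgrade this local non-simpliciality into the combinatorial requirement of the previous paragraph: by inspecting the binary characteristic vectors $v(\mu)=(\rho_1(\mu),\ldots,\rho_n(\mu))\in\{0,1\}^n$ of the rays belonging to $F_{j^*}$ and exploiting $|S^\vee(1)|>n$ in the non-simplicial setting, a pigeonhole analysis on the incidences between these rays and the facets of $\omega_S$ meeting $F_{j^*}$ should furnish a pair $(\mu_1,\mu_2)$ separated by two facets in the required way, after which the derivation $\overline{\partial}$ constructed above admits no lift and $\pi$ is not surjective.
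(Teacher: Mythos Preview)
Your strategy coincides with the paper's: build a homogeneous derivation $\overline{\partial}$ on $\KK[S]/I_\mathcal{H}$ whose degree $\alpha$ pairs negatively with at least two distinct rays of $S^\vee(1)$, which by \eqref{eq:R} rules out $\alpha\in\mathcal{R}$ and hence any outer lift. But your final paragraph is not a proof. You correctly isolate the combinatorial target---two facet normals $\rho,\rho'\in S^\vee(1)$ and two Hilbert-basis elements with $\mu_2\in\rho^\perp\cap\rho'^\perp$ and $\mu_1\notin\rho^\perp\cup\rho'^\perp$---yet the closing sentence (``a pigeonhole analysis \ldots\ should furnish a pair'') is a hope, not an argument. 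The data you set up (the binary vectors $v(\mu)$, the index $j^*$, the extra ray $\mu^*$) do not obviously produce such a pair: for instance $\rho_{j^*}$ takes the value $1$ on both $\mu^*$ and $\widetilde{\mu}_{j^*}$, so it cannot serve as one of the two separating facet normals, and you give no mechanism for locating two others among $\beta'$ or among $S^\vee(1)\setminus\beta'$. You yourself flag this as ``the main obstacle'', and it remains one.

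The paper's route is shorter and sidesteps both your $\{0,1\}$-reduction and the pigeonhole search. Fix a complete flag; the top of the flag supplies two facets $F_{n-1}=\rho_n^\perp\cap S$ and $G=\rho_{n-1}^\perp\cap S$ meeting in the codimension-two face $F_{n-2}$. Because $S$ is not simplicial, after possibly adjusting the flag one finds a ray $\mu\in S(1)$ lying in neither $F_{n-1}$ nor $G$, so that $\rho_{n-1}(\mu)>0$ and $\rho_n(\mu)>0$. Pair $\mu$ with $\mu_1\in F_1\subset F_{n-2}$, for which $\rho_{n-1}(\mu_1)=\rho_n(\mu_1)=0$; then $\alpha=\mu_1-\mu$ satisfies $\rho_{n-1}(\alpha)<0$ and $\rho_n(\alpha)<0$ simultaneously, and the derivation $\overline{\XX}^\mu\mapsto\overline{\XX}^{\mu_1}$ cannot lift. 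No case analysis on $\rho(\mathcal{H})$ is needed.
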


\begin{proof} 
Let $F$ be a non-simplicial face of $S$ of minimal rank $r$. Note that $ 3 \leq r \leq n $.  We now choose faces $ F_1 $ and $ F_2 $ of $ F $, both of rank $ r-1 $, such that their intersection $ F_1 \cap F_2 $ has rank $r-2$, and such that $ F_1 \cup F_2$ does not contain all the rays in $S(1)$. We claim that such choice is possible. 

Indeed, let $G \subset F$ be a face of rank $r-2$, the dual face of $G$ in $F$ is $G^*\cap F^\bot$ and has rank 2. Hence, $G^*\cap F^\bot$ is generated by exactly two rays, say $ \rho_1 $ and $ \rho_2 $. The faces $ F_1 = \rho_1^\perp \cap F$ and $ F_2 = \rho_2^\perp \cap F $ are facets of $F$ such that $ G = F_1 \cap F_2 $. Since $ F_1 $ and $ F_2 $ have $ r-1 $ rays with $ r-2 $ rays in common, their union $ F_1 \cup F_2 $ has $ r $ rays. But since $ F $ is non-simplicial, there must exist a ray of $F$ that is not contained in $ F_1 \cup F_2 $.

Letting $P\subset S^{\vee}(1)$ be the set
\[
P = \{ \rho \in S^{\vee}(1) \mid \rho \notin F^* \}.
\]
We fix $ \rho_1, \rho_2 \in P $ such that $ \rho_1 \in F_1^* $, $ \rho_2 \in F_2^* $, and $ \rho_1 \neq \rho_2 $. This choice is possible since $ F^*$ is a face of each $ F_i^* $, and hence $ F_i^* \cap P \neq \emptyset $. Moreover, we have $ F_1^* \cap P \neq F_2^* \cap P $; otherwise, if $ F_1^* \cap P = F_2^* \cap P $, then for every $ \rho \in S^\vee(1) $, we would have $ \rho \in F_1^* $ if and only if $ \rho \in F_2^* $, implying $ F_1 = F_2 $, which contradicts the assumption that $ F_1 \neq F_2 $. Finally, observe that since $ \rho_i^* $ and $ F $ are both faces of $ S $, their intersection $ \rho_i^* \cap F $ is also a face of $ S $. Moreover, the rank of $ \rho_i^* \cap F $ is $r-1$. Hence, $\rho_i^* \cap F = F_i$.

For $ \mu_1 \in F \cap S$ with $ \mu_1 \notin F_1 \cup F_2 $, and $ \mu_2 \in F_1 \cap F_2 $, we define a derivation 
$$\overline{\partial}\colon \mathbf{k}[S]/I_\mathcal{H}\to \mathbf{k}[S]/I_\mathcal{H}\quad \mbox{given by}\quad 
\overline{\partial}(\overline{\mathbf{x}}^{\mu_1}) = \overline{\mathbf{x}}^{\mu_2}, \quad \text{and} \quad \overline{\partial}(\overline{\mathbf{x}}^{\mu}) = 0 \quad \text{for all } \mu \neq \mu_1.
$$
This defines a homogeneous linear map of degree $ \alpha = \mu_2 - \mu_1 $, which satisfies
\begin{align*} 
\rho_1(\alpha) = -\rho_1(\mu_1) < 0 \quad \text{and} \quad \rho_2(\alpha) = -\rho_2(\mu_1) < 0.
\end{align*}
Moreover, the linear map $\overline{\partial}$ is indeed a derivation where the only non-trivial instance of the Leibniz rule is 
$$0=\overline{\partial}(0)=\overline{\partial}(\overline{\mathbf{x}}^{\mu_i+\mu_j}) \quad \mbox{and}\quad\overline{\partial}(\overline{\mathbf{x}}^{\mu_i})\overline{\mathbf{x}}^{\mu_j}+\overline{\mathbf{x}}^{\mu_i}\overline{\partial}(\overline{\mathbf{x}}^{\mu_j})=0\,.$$

Indeed, $ \rho_i(\mu_1) > 0 $, since otherwise $ \rho_i(\mu_1) = 0 $ would imply that $\mu_1 \in \rho_i^* \cap F = F_i$, contradicting the assumption that $ \mu_1 \notin F_1 \cup F_2$.

Assume that there exists $\partial\in \Der(\KK[S])$ homogeneous such that $\pi(\partial)=\overline{\partial}$. Since $\rho_i(\alpha)$ is negative, $\partial$ must be an outer derivation. However, by \eqref{eq:R}, for every outer derivation, there exists a unique $ \rho \in S^\vee(1) $ such that $ \rho(\alpha) < 0 $, which contradicts the fact that both $ \rho_1(\alpha)$ and $ \rho_2(\alpha)$ are negative. This contradiction concludes the proof.
\end{proof}

\cref{lemma: nor sujective l-index} and \cref{lemma: nor sujective non-simplicial} handle all the cases other than the case where $\KK[S]$ is the polynomial ring. This case is contained in \cite[Lemma 2.1.2]{B95}. For the convenience of the reader, we provide a short argument. 

\begin{lemma}\label{lemma:affine-space}
Let $S = \ZZ_{\geq 0}^n$ and let $I$ be a monomial ideal of $\KK[S]$. Then the natural map $\pi\colon \Der_I(\KK[S]) \to \Der(\KK[S]/I)$ is surjective.
\end{lemma}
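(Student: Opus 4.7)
The plan is to reduce to the case of a single $M$-homogeneous derivation and then split according to whether its degree is inner or outer, following the dichotomy of Proposition~\ref{decomposition-no-I} (specialized to the polynomial ring as in Example~\ref{example: homogeneous derivations of the polynomial ring}).

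Given $\overline{\partial}\in\Der(\KK[S]/I)$, each image $\overline{\partial}(\overline{x}_i)$ is a finite $\KK$-linear combination of classes $\overline{\XX}^m$ with $m\notin\supp(I)$, so only finitely many degrees $\alpha\in M$ appear across the $n$ generators $\overline{x}_1,\ldots,\overline{x}_n$. Gathering contributions by degree produces a finite decomposition $\overline{\partial}=\sum_\alpha\overline{\partial}_\alpha$ into $M$-homogeneous derivations, and since the lifting problem is $\KK$-linear it suffices to lift each homogeneous piece. A nonzero homogeneous $\overline{\partial}_\alpha$ necessarily sends $\overline{x}_i$ to a scalar multiple $c_i\overline{\XX}^{e_i+\alpha}$ with $c_i=0$ unless $e_i+\alpha\in S\setminus\supp(I)$; since some $c_i$ is nonzero, the vector $e_i+\alpha$ lies in $S$, which forces $\alpha$ to lie either in $S$ (inner case) or in $\mathcal{R}_{e_j^*}$ for a unique index $j$ (outer case).

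In the inner case, let $p\in N_\KK$ be defined by $p(e_i)=c_i$ and set $\partial:=\partial_{\alpha,p}$ as in \eqref{equation:homogeneous}. Then $\pi(\partial)$ agrees with $\overline{\partial}_\alpha$ on every generator $\overline{x}_i$, hence on all of $\KK[S]/I$ by the Leibniz rule; and $\partial(I)\subset I$ is automatic, since $\partial(\XX^m)=p(m)\XX^{m+\alpha}$ and $\supp(I)+\alpha\subset\supp(I)$ whenever $\alpha\in S$. In the outer case, the conditions $\alpha_j=-1$ and $\alpha_k\geq 0$ for $k\neq j$ imply $e_i+\alpha\notin S$ for $i\neq j$, so $c_i=0$ for $i\neq j$, and $\overline{\partial}_\alpha$ is the class of a scalar multiple of the outer derivation $c_j\,x_1^{\alpha_1}\cdots\widehat{x_j^{\alpha_j}}\cdots x_n^{\alpha_n}\frac{d}{dx_j}$ from Example~\ref{example: homogeneous derivations of the polynomial ring}; I take $\partial$ to be this derivation on $\KK[S]$, which matches $\overline{\partial}_\alpha$ on generators. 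The only point that requires verification is the containment $\partial(I)\subset I$, which by Proposition~\ref{decomposition-derivations} amounts to showing $\alpha\in\mathcal{R}_{e_j^*}(I)$. This can be extracted from the hypothesis that $\overline{\partial}$ is well defined on the quotient: for each generator $\XX^{\mathbf{a}_i}$ of $I$ with $(\mathbf{a}_i)_j>0$, applying Leibniz to $\overline{\partial}_\alpha$ at $\overline{\XX}^{\mathbf{a}_i}=0$ yields $c_j(\mathbf{a}_i)_j\overline{\XX}^{\mathbf{a}_i+\alpha}=0$, and since $(\mathbf{a}_i)_j\geq 1$ guarantees $\mathbf{a}_i+\alpha\in S$, this forces $\mathbf{a}_i+\alpha\in\supp(I)$, as required.

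The only step I expect to require genuine care is the outer case, where one must translate the abstract well-definedness of $\overline{\partial}$ on the quotient into the concrete combinatorial condition $\alpha\in\mathcal{R}_{e_j^*}(I)$ via the Leibniz computation above. The inner case is automatic from $\alpha\in S$, and the reduction to homogeneous pieces is routine since there are only finitely many generators to decompose.
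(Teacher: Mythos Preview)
Your proof is correct but takes a genuinely different route from the paper's. The paper's argument is essentially two lines: lift each $\overline{\partial}(\overline{x}_i)$ arbitrarily to some $f_i\in\KK[S]$, set $\partial=\sum_i f_i\,\dfrac{d}{dx_i}$ (using that $\Der(\KK[x_1,\ldots,x_n])$ is a free module on the partial derivatives), and observe that $\varphi\circ\partial=\overline{\partial}\circ\varphi$ holds on the generators $x_i$ and hence everywhere, which immediately forces $\partial(I)\subset\ker\varphi=I$. There is no decomposition into homogeneous pieces, no inner/outer dichotomy, and no combinatorial verification. Your approach instead splits $\overline{\partial}$ into its $M$-graded components and lifts each one through the structural classification of Propositions~\ref{decomposition-no-I} and~\ref{decomposition-derivations}. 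The payoff is that you obtain a \emph{homogeneous} lift of each homogeneous piece, a refinement the paper's argument does not provide; the cost is the outer-case Leibniz computation extracting $\alpha\in\mathcal{R}_{e_j^*}(I)$ from well-definedness of $\overline{\partial}_\alpha$, which the paper avoids entirely by getting $\partial(I)\subset I$ for free from the commutative square.
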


\begin{proof}
Recall that $\KK[S] = \KK[x_1, \dots, x_n]$ as in \cref{ex:polynomial-ring}. Let $\varphi\colon \KK[S] \to \KK[S]/I$ be the quotient map. Let $\partial \in \Der(\KK[S]/I)$. We choose $f_i \in \varphi^{-1}(\overline{\partial}(\overline{x}_i))$ for all $1\leq i\leq n$. Since $\Der(\KK[S])$ is freely generated by the partial derivatives $\left\{\dfrac{d}{dx_1}, \dots, \dfrac{d}{dx_n}\right\}$ as a $\KK[S]$-module, see \cite[Section~3.2.2]{F17}, the map $\partial\colon \KK[S] \to \KK[S],$ defined by 
$$\partial = \sum_{i=1}^n f_i \dfrac{d}{dx_i} \quad \text{is a derivation on } \KK[S] \text{ such that } \partial(x_i) = f_i\,.$$
 Since $\overline{\partial} \circ \varphi = \varphi \circ \partial$, we have that $\partial(I) \subset I$. Hence $\pi(\partial) = \overline{\partial},$ which implies that $\pi$ is surjective.
\end{proof}

We can finally proceed with the proof of \cref{th:surjective derivatio}.

\begin{proof}[Proof of \cref{th:surjective derivatio}]
 The direct implication follows from \cref{lemma proper subset} if $S$ is simplicial and from \cref{lemma: nor sujective non-simplicial} if $S$ is not simplicial. On the other hand, the converse implication follows from \cref{lemma:affine-space}.
\end{proof}

\section{Liftable locally nilpotent derivation on monomial algebras}\label{section 3} 

Let $S$ be a semigroup and let $I \subset \KK[S]$ be a monomial ideal. In this section, we compute the homogeneous locally nilpotent derivations on the algebra $\KK[S]/I$, which are obtained as the images of derivations on $\KK[S]$ via $\pi$ in \eqref{projection}. We formalize this concept in the following definition.

\begin{definition}
 Let $S$ be a semigroup and let $I$ be a monomial ideal of $\KK[S]$. We say that a derivation $\overline{\partial}$ on $\KK[S]/I$ is \textit{liftable} if there exists a derivation $\partial \in \Der_I(\KK[S])$ such that $\overline{\partial} = \pi(\partial)$.
\end{definition}

\begin{remark}
  In view of \cref{th:surjective derivatio}, if $S$ is isomorphic to $\ZZ_{\geq 0}^n$, then all derivations on $\KK[S]/I$ are liftable.
\end{remark}

If $\overline{\partial}\colon\KK[S]/I\to \KK[S]/I$ is a liftable homogeneous derivation, then there exists a homogeneous derivation $\partial$ of $\KK[S]$ such that $\overline{\partial}=\pi(\partial)$. According to \cref{decomposition-derivations}, we have $\partial=\partial_{\alpha,p}.$ Therefore, we conclude that $\overline{\partial}=\pi(\partial_{\alpha,p})$, and we denote this derivation by $\overline{\partial}_{\alpha,p}$. Homogeneous derivations in $\KK[S]$ come in two types: inner and outer. The case of outer derivations on $\KK[S]$ is straightforward. Indeed, outer derivations are all locally nilpotent and so any such derivation that descends to the quotient $\KK[S]/I$ remains locally nilpotent. We formalize this in the following lemma for future reference.

\begin{lemma}\label{lemma: outlnd}
  Let $S$ be a saturated affine semigroup and let $I$ be a monomial ideal of $\KK[S].$ Then, any homogeneous derivation $\partial \in \mathfrak{s}(I)$ induces a locally nilpotent derivation on $\KK[S]/I$.
\end{lemma}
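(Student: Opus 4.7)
The plan is to compute the iterates $\partial^k(\XX^m)$ explicitly for a single monomial, show the result vanishes for $k$ sufficiently large, and then transfer this to the quotient by $\KK$-linearity, using that $\KK[S]/I$ has a basis of (residue classes of) monomials.

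By assumption $\partial \in \mathfrak{s}(I) \subset \mathfrak{s}$, so $\partial = \partial_{\alpha,p}$ with $\alpha \in \mathcal{R}$ and $p = \lambda \rho$ for some $\lambda \in \KK$, where $\rho \in S^\vee(1)$ is the unique ray with $\rho(\alpha) = -1$. First, I would compute by induction on $k$ that
\[
\partial^k(\XX^m) \;=\; \lambda^k \, \rho(m)\bigl(\rho(m)-1\bigr)\cdots\bigl(\rho(m)-k+1\bigr)\, \XX^{m+k\alpha},
\]
using the identity $\rho(m+j\alpha) = \rho(m) - j$ that follows from $\rho(\alpha) = -1$. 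Since $m \in S$ and $\rho \in S^\vee$, the integer $\rho(m)$ is a nonnegative integer, so the falling factorial on the right-hand side vanishes as soon as $k > \rho(m)$. Thus $\partial$ is locally nilpotent on each monomial of $\KK[S]$, with nilpotency index depending only on $\rho(m)$.

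Next, I would descend to the quotient. The derivation $\overline{\partial} = \pi(\partial)$ satisfies $\overline{\partial}^k(\overline{\XX}^m) = \overline{\partial^k(\XX^m)}$, so by the computation above, $\overline{\partial}^{\rho(m)+1}(\overline{\XX}^m) = 0$ for every $m \in S$. Since $\KK[S]/I$ has a $\KK$-basis consisting of the classes $\{\overline{\XX}^m \mid m \in S \setminus \supp(I)\}$, an arbitrary element $\overline{f} \in \KK[S]/I$ is a finite $\KK$-linear combination $\overline{f} = \sum_{i=1}^r c_i \overline{\XX}^{m_i}$. Taking $\ell = 1 + \max_{1 \le i \le r} \rho(m_i)$, linearity of $\overline{\partial}$ gives $\overline{\partial}^{\ell}(\overline{f}) = 0$, which proves local nilpotency.

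There is no real obstacle here: the whole argument reduces to the standard fact that homogeneous derivations of negative degree $-1$ along a single ray $\rho$ are locally nilpotent on monomials, combined with the observation that the quotient map is $\KK$-linear and sends monomials to monomials (or to zero). The only thing to double-check is that passing to the quotient does not destroy nilpotency, which is immediate because $\pi$ is a homomorphism of $\KK$-algebras compatible with $\partial$ and $\overline{\partial}$.
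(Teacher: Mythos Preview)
Your proof is correct and follows the same logic the paper sketches in the sentence preceding the lemma: outer derivations in $\mathfrak{s}$ are already locally nilpotent on $\KK[S]$ (your falling-factorial computation makes this explicit), and local nilpotency is preserved under the quotient map. The paper states this without a written-out proof, so your argument simply fills in the details the authors left as ``straightforward.''
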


We now turn our attention to inner derivations. Inner derivations are never locally nilpotent on $\KK[S]$, but they may become so in the quotient $\KK[S]/I$. Recall that all inner derivations on $\KK[S]$ leave $I$ invariant and, therefore, pass to the quotient $\KK[S]/I$.

\begin{lemma}\label{lemma hlnd no empty}
  Let $I$ be a monomial ideal of $\KK[S]$ and let $\alpha \in S$. If $(\ZZ_{\geq 0} \cdot \alpha) \cap \supp(I) \neq \emptyset$, then the derivation $\quot{\partial}_{\alpha,p} \colon \KK[S]/I \to \KK[S]/I$ is locally nilpotent for all $p \in N_\KK$.
\end{lemma}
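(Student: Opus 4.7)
The plan is to compute iterates of $\partial_{\alpha,p}$ directly on monomials and exploit the ideal structure of $\supp(I)$. Since $\partial_{\alpha,p}$ is homogeneous of degree $\alpha$, a straightforward induction on $k$ using \eqref{equation:homogeneous} yields
$$
\partial_{\alpha,p}^{k}(\XX^{m}) \;=\; \Big(\prod_{j=0}^{k-1} p(m+j\alpha)\Big)\cdot \XX^{m+k\alpha}
$$
for every $m\in S$ and every $k\geq 1$. In particular, the iterate $\partial_{\alpha,p}^{k}(\XX^{m})$ is always a scalar multiple of $\XX^{m+k\alpha}$.

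Next, I would use the hypothesis to extract a uniform bound. By assumption there exists $N\in\ZZ_{\geq 1}$ with $N\alpha \in \supp(I)$. Since $I$ is an ideal of $\KK[S]$, the set $\supp(I)$ is closed under addition by arbitrary elements of $S$; hence for every $m\in S$,
$$
m + N\alpha \;=\; m + N\alpha \;\in\; \supp(I),
$$
so $\XX^{m+N\alpha}\in I$. Consequently $\quot{\partial}_{\alpha,p}^{\,N}(\quot{\XX}^{m})=0$ for every $m\in S$. Since $\KK[S]/I$ is spanned as a $\KK$-vector space by the classes $\quot{\XX}^{m}$ with $m\in S\setminus\supp(I)$, the operator $\quot{\partial}_{\alpha,p}^{\,N}$ vanishes on all of $\KK[S]/I$. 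This is in fact the stronger statement that $\quot{\partial}_{\alpha,p}$ is nilpotent, and in particular locally nilpotent, finishing the proof.

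There is essentially no obstacle here: the only subtlety is to observe that the hypothesis $(\ZZ_{\geq 0}\cdot\alpha)\cap\supp(I)\neq\emptyset$, combined with the ideal property $\supp(I)+S\subseteq \supp(I)$, promotes the naive pointwise nilpotency into a uniform bound independent of $m$. One should also note that the liftability of $\quot{\partial}_{\alpha,p}$ is automatic for inner derivations since $\mathfrak{g}\subset\Der_{I}(\KK[S])$, so the statement is well posed without further hypotheses on the ideal $I$.
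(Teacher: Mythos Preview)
Your proof is correct and follows essentially the same approach as the paper: both compute the iterate $\partial_{\alpha,p}^{k}(\XX^m)$ as a scalar multiple of $\XX^{m+k\alpha}$ and then use the ideal property (your $\supp(I)+S\subseteq\supp(I)$, the paper's $\XX^{m+\ell\alpha}=\XX^m\cdot\XX^{\ell\alpha}\in I$) to conclude. Your additional remarks that this in fact gives uniform nilpotency and that liftability is automatic for inner derivations are accurate and worth keeping.
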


\begin{proof}
  By hypothesis, there exists $\ell \in \ZZ_{\geq 0}$ such that $\ell \cdot \alpha \in \supp(I)$. Let $m \in S$. A straightforward computation shows that
  \begin{align} \label{iteration}
    \partial_{\alpha,p}^\ell(\XX^m) = r \cdot \XX^{m+\ell \cdot \alpha} \quad \mbox{where} \quad r\in \KK\,.
  \end{align}


  Now, $\partial_{\alpha,p}^\ell(\XX^m) = r \cdot \XX^{m+\ell \cdot \alpha} = r \cdot \XX^m \cdot \XX^{\ell \cdot \alpha}$ and $\XX^{\ell \cdot \alpha} \in I$ since $\ell \cdot \alpha \in \supp(I)$. This yields $\partial_{\alpha,p}^\ell(\XX^m) \in I$ and so $\quot{\partial}_{\alpha,p}^\ell(\XX^m) = 0$, proving that $\quot{\partial}_{\alpha,p}$ is locally nilpotent.
\end{proof}

\begin{lemma}\label{proposition hlnd0}
Let $I$ be a monomial ideal of $\KK[S]$ and let $\alpha \in S$. Assume that $(\ZZ_{\geq 0} \cdot \alpha) \cap \supp(I) = \emptyset$. Then the derivation $\quot{\partial}_{\alpha,p} \colon \KK[S]/I \to \KK[S]/I$ is locally nilpotent if and only if
  \begin{align}\label{eq:cond-rara}
    (m + \ZZ_{\geq 0} \cdot \alpha) \cap \supp(I) \neq \emptyset \quad \mbox{for all} \quad m \in S \setminus p^\bot\,.
  \end{align}
\end{lemma}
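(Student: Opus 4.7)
The plan is to leverage the iteration formula \eqref{iteration} already derived in the proof of \cref{lemma hlnd no empty}: namely,
$$\partial_{\alpha,p}^\ell(\XX^m) = r_\ell(m)\cdot \XX^{m+\ell\alpha}\quad\text{where}\quad r_\ell(m)=\prod_{j=1}^{\ell}\bigl(p(m)+(j-1)p(\alpha)\bigr).$$
Under the standing hypothesis $(\ZZ_{\geq 0}\cdot\alpha)\cap\supp(I)=\emptyset$, the only ways for $\quot{\partial}_{\alpha,p}^\ell(\quot{\XX}^m)$ to vanish are either that the coefficient $r_\ell(m)$ is zero or that the exponent $m+\ell\alpha$ has landed in $\supp(I)$. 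The whole proof is a bookkeeping exercise on these two mechanisms.

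For the ($\Leftarrow$) direction I would pick an arbitrary $m\in S\setminus\supp(I)$. If $m\in p^\bot$ then $\partial_{\alpha,p}(\XX^m)=0$, so $\quot{\partial}_{\alpha,p}(\quot{\XX}^m)=0$ already. Otherwise the assumption \eqref{eq:cond-rara} hands me an $\ell$ with $m+\ell\alpha\in\supp(I)$, so $\quot{\XX}^{m+\ell\alpha}=0$ and hence $\quot{\partial}_{\alpha,p}^{\ell}(\quot{\XX}^m)=0$. This direction is essentially immediate.

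For the ($\Rightarrow$) direction I would argue by contrapositive: assume there exists $m\in S\setminus p^\bot$ with $(m+\ZZ_{\geq 0}\cdot\alpha)\cap\supp(I)=\emptyset$, and exhibit a concrete element on which $\quot{\partial}_{\alpha,p}$ fails to be nilpotent. The naive candidate $\quot{\XX}^m$ does not work directly, because although $m+\ell\alpha\notin\supp(I)$ for every $\ell$, the coefficient $r_\ell(m)$ might happen to vanish if $p(m)+(j-1)p(\alpha)=0$ for some $j\geq 1$. The key observation is that this can happen for at most one value of $j$: if $p(\alpha)=0$, then $r_\ell(m)=p(m)^{\ell}\ne 0$ and $\quot{\XX}^m$ already does the job; if $p(\alpha)\ne 0$, then $p(m)+(j-1)p(\alpha)=0$ has at most one integer solution $j_0\geq 1$, and I can shift to $m'=m+j_0\cdot\alpha$. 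This new element still lies in $S\setminus\supp(I)$, still satisfies $(m'+\ZZ_{\geq 0}\cdot\alpha)\cap\supp(I)=\emptyset$, and now all the factors $p(m')+(j-1)p(\alpha)=j\cdot p(\alpha)$ (for $j\geq 1$) are nonzero. Therefore $\quot{\partial}_{\alpha,p}^\ell(\quot{\XX}^{m'})=r_\ell(m')\quot{\XX}^{m'+\ell\alpha}\ne 0$ for every $\ell\geq 1$, proving that $\quot{\partial}_{\alpha,p}$ is not locally nilpotent.

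The only subtle point, and hence the main obstacle, is this shifting trick in the contrapositive: one must not mistake a transient vanishing of the coefficient for genuine nilpotency. Once one notices that the linear function $j\mapsto p(m)+(j-1)p(\alpha)$ has at most one zero, the argument goes through cleanly and both cases $p(\alpha)=0$ and $p(\alpha)\ne 0$ can be treated uniformly by choosing $m'$ large enough along the orbit $m+\ZZ_{\geq 0}\cdot\alpha$.
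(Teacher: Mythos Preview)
Your proof is correct, but it takes a different route from the paper's. The paper begins by observing that \emph{each} side of the equivalence already forces $p(\alpha)=0$: applying local nilpotency to $\overline{\XX}^\alpha$ gives $\partial^\ell_{\alpha,p}(\XX^\alpha)=\ell!\,p(\alpha)^\ell\,\XX^{(\ell+1)\alpha}$, which can only lie in $I$ if $p(\alpha)=0$; and applying \eqref{eq:cond-rara} to $m=\alpha$ contradicts the standing hypothesis $(\ZZ_{\geq 0}\cdot\alpha)\cap\supp(I)=\emptyset$ unless $\alpha\in p^\bot$. Once $p(\alpha)=0$ is known, the iteration formula collapses to $\partial^\ell_{\alpha,p}(\XX^m)=p(m)^\ell\,\XX^{m+\ell\alpha}$, and both directions become a one-line check with no case split and no shifting.

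Your argument bypasses this preliminary reduction and instead confronts the case $p(\alpha)\neq 0$ head-on in the contrapositive, using the shift $m'=m+j_0\alpha$ to move past the unique zero of $j\mapsto p(m)+(j-1)p(\alpha)$. This works, and has the mild advantage that the $(\Leftarrow)$ direction goes through without ever invoking $p(\alpha)=0$. The paper's approach is more economical overall: the single observation that $\alpha$ itself is the right test element eliminates the shifting trick and the $p(\alpha)\neq 0$ case entirely.
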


\begin{proof}
  We claim that both propositions of the ``if and only if'' imply that $p(\alpha) = 0$. Indeed, condition \eqref{eq:cond-rara} applied to $\alpha$ implies $\alpha \in p^\bot$. On the other hand, if $\quot{\partial}_{\alpha,p}$ is locally nilpotent, then the iteration $\partial_{\alpha,p}^\ell(\XX^\alpha) = \ell! p(\alpha)^\ell \XX^{(\ell+1) \cdot \alpha}$ by \eqref{iteration} and this expression belongs to the ideal only if $p(\alpha) = 0$. 

  Now, since $p(\alpha) = 0$, we have that 
  \begin{align}\label{iteration2} 
    \partial_{\alpha,p}^\ell(\XX^m) = p(m)^\ell \cdot \XX^{m + \ell \cdot \alpha}\quad \mbox{for all} \quad m\in S\,.
  \end{align}
  The derivation $\quot{\partial}_{\alpha,p}$ is locally nilpotent if and only if for every $m \in S$, there exists $\ell\in \ZZ_{\geq 0}$ such that $\partial_{\alpha,p}^\ell(\XX^m) \in I$. By \eqref{iteration2} and the same argument in \cref{lemma hlnd no empty}, this is the case if and only if
  \begin{align*}
    m + \ell \cdot \alpha \in \supp(I) \mbox{ for some } \ell \in \ZZ_{\geq 0} \quad \mbox{or} \quad p(m) = 0\,,
  \end{align*}
  and this last condition is equivalent to \eqref{eq:cond-rara}.
\end{proof}

\begin{remark}
  It is sufficient to verify the condition in \cref{proposition hlnd0} on a set of generators of $S$.
\end{remark}

We now state our main classification result for liftable homogeneous locally nilpotent derivations on $\KK[S]/I$. It amounts to the recollection of the three previous lemmas.

\begin{theorem} \label{main-classification}
  Let $S$ be a semigroup, and let $I$ be a monomial ideal on $\KK[S]$. A liftable homogeneous derivation $\overline{\partial}_{\alpha,p} \colon \KK[S]/I \to \KK[S]/I$ with $p \in N_{\KK}$ and $\alpha \in M$ is locally nilpotent if and only if
  \begin{enumerate}
    \item [$(i)$] $\alpha \in \mathcal{R}_\rho(I)$ and $p = \lambda \cdot \rho$ with $\rho\in S^\vee(1)$ and $\lambda \in \KK$; or
    \item [$(ii)$] $\alpha \in S$ and $(\ZZ_{\geq 0} \cdot \alpha) \cap \supp(I) \neq \emptyset$; or
    \item [$(iii)$] $\alpha \in S$, $(\ZZ_{\geq 0} \cdot \alpha) \cap \supp(I) = \emptyset$, and $(m + \ZZ_{\geq 0} \cdot \alpha) \cap \supp(I) \neq \emptyset$ for all $m \in S \setminus p^\bot$.
  \end{enumerate}
\end{theorem}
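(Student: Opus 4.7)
The plan is to recognize that \cref{main-classification} is essentially a synthesis of the three preceding lemmas together with \cref{decomposition-derivations}, and to use these to handle each of the three cases exhaustively.

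First I would unpack what it means for $\overline{\partial}_{\alpha,p}$ to be liftable: by definition there exists $\partial \in \Der_I(\KK[S])$ with $\pi(\partial) = \overline{\partial}_{\alpha,p}$, and since both source and target are $M$-graded and $\overline{\partial}_{\alpha,p}$ is homogeneous of degree $\alpha$, we can arrange $\partial$ to be homogeneous of degree $\alpha$ as well (by taking the degree-$\alpha$ component of $\partial$, using the finite decomposition into homogeneous components from the discussion preceding \eqref{equation:homogeneous}). So we may write $\partial = \partial_{\alpha,p}$ as in \eqref{equation:homogeneous}. By \cref{decomposition-derivations}, there are exactly two possibilities: either $\partial_{\alpha,p} \in \mathfrak{s}(I)$, i.e., $\alpha \in \mathcal{R}(I)$ and $p = \lambda \rho$ for some $\rho \in S^\vee(1)$ with $\alpha \in \mathcal{R}_\rho(I)$; or $\partial_{\alpha,p} \in \mathfrak{g}$, i.e., $\alpha \in S$ and $p \in N_\KK$ is arbitrary.

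Next, I would handle each case. In the outer case ($\alpha \in \mathcal{R}(I)$, $p = \lambda \rho$), \cref{lemma: outlnd} says the induced derivation on the quotient is automatically locally nilpotent, so condition $(i)$ is both necessary and sufficient for local nilpotency within this subfamily. In the inner case ($\alpha \in S$), I would split further according to whether $(\ZZ_{\geq 0} \cdot \alpha) \cap \supp(I)$ is empty or not. If it is nonempty, \cref{lemma hlnd no empty} gives local nilpotency unconditionally, yielding case $(ii)$. If it is empty, \cref{proposition hlnd0} gives local nilpotency if and only if the set-theoretic condition $(m + \ZZ_{\geq 0}\cdot\alpha) \cap \supp(I) \neq \emptyset$ holds for every $m \in S \setminus p^\perp$, which is exactly case $(iii)$.

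Finally I would point out that the three cases are exhaustive: cases $(ii)$ and $(iii)$ together cover all inner liftable homogeneous derivations (they are mutually exclusive according to whether $(\ZZ_{\geq 0}\cdot\alpha)\cap\supp(I)$ is empty), and case $(i)$ covers all outer ones. Conversely, each of $(i)$, $(ii)$, $(iii)$ produces a liftable derivation that is locally nilpotent by the corresponding lemma, so the forward implication in each direction reduces to citing the appropriate lemma. The only small subtlety I foresee is explaining why a liftable homogeneous derivation can always be lifted to a homogeneous one of the same degree; this is handled by the $M$-graded decomposition of $\Der(\KK[S])$ combined with the fact that $\Der_I(\KK[S])$ is itself $M$-graded (so the degree-$\alpha$ part of any lift is again a valid lift), which I would state briefly before invoking the classification.
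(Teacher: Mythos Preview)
Your proposal is correct and follows essentially the same approach as the paper: the paper's proof is the single sentence ``The result follows directly from Lemmas \ref{lemma: outlnd}, \ref{lemma hlnd no empty}, and \ref{proposition hlnd0},'' and your write-up is simply an expanded version of that citation, making explicit the inner/outer dichotomy coming from \cref{decomposition-derivations} and the further split of the inner case according to whether $(\ZZ_{\geq 0}\cdot\alpha)\cap\supp(I)$ is empty. The ``subtlety'' you flag about lifting to a homogeneous derivation of the same degree is already handled in the paper's paragraph preceding \cref{lemma: outlnd}, so you need not dwell on it.
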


\begin{proof}
  The result follows directly from Lemmas \cref{lemma: outlnd}, \cref{lemma hlnd no empty}, and \cref{proposition hlnd0}.

\end{proof}

\begin{example}\label{Example:degree} 
With the notation in \cref{ex:polynomial-ring} for $n=2$, we present here examples of all the degrees of homogeneous derivations whose images in the quotient are locally nilpotent derivation for different monomial ideals $I$. Red points indicate the degrees described in \cref{main-classification}~$(i)$. Green points indicate the degrees described in \cref{main-classification}~$(ii)$. Finally, blue points indicate the degrees described in \cref{main-classification}~$(iii)$.
\begin{figure}[H]
\begin{minipage}[c]{0.3\textwidth}
\centering
\begin{picture}(100,95)
\definecolor{gray1}{gray}{0.7}
\definecolor{gray2}{gray}{0.85}
\definecolor{green}{RGB}{0,124,0}

\textcolor{gray2}{\put(0,20){\vector(1,0){90}}}
\textcolor{gray2}{\put(10,10){\vector(0,1){88}}}

\put(0,10){\textcolor{gray1}{\circle*{3}}}
\put(10,10){\textcolor{gray1}{\circle*{3}}}
\put(20,10){\textcolor{gray1}{\circle*{3}}} 
\put(30,10){\textcolor{gray1}{\circle*{3}}} 
\put(40,10){\textcolor{gray1}{\circle*{3}}} 
\put(50,10){\textcolor{gray1}{\circle*{3}}} 
\put(60,10){\textcolor{gray1}{\circle*{3}}} 
\put(70,10){\textcolor{gray1}{\circle*{3}}} 
\put(80,10){\textcolor{gray1}{\circle*{3}}}

\put(0,20){\textcolor{gray1}{\circle*{3}}}
\put(10,20){\circle{3}}
\put(20,20){\textcolor{blue}{\circle*{3}}}
\put(30,20){\textcolor{blue}{\circle*{3}}}
\put(40,20){\textcolor{blue}{\circle*{3}}}
\put(50,20){\textcolor{blue}{\circle*{3}}}
\put(60,20){\textcolor{blue}{\circle*{3}}}
\put(70,20){\textcolor{blue}{\circle*{3}}}
\put(80,20){\textcolor{blue}{\circle*{3}}}

\put(0,30){\textcolor{gray1}{\circle*{3}}}
\put(10,30){{\circle{3}}}
\put(20,30){\textcolor{green}{\circle*{3}}} 
\put(30,30){\textcolor{green}{\circle*{3}}} 
\put(40,30){\textcolor{green}{\circle*{3}}} 
\put(50,30){\textcolor{green}{\circle*{3}}} 
\put(60,30){{\circle*{3}}} 
\put(70,30){\circle*{3}}
\put(80,30){\circle*{3}}

\put(0,40){\textcolor{gray1}{\circle*{3}}}
\put(10,40){{\circle{3}}}
\put(20,40){\textcolor{green}{\circle*{3}}} 
\put(30,40){\textcolor{green}{\circle*{3}}} 
\put(40,40){{\circle*{3}}}
\put(50,40){{\circle*{3}}}
\put(60,40){\circle*{3}} 
\put(70,40){\circle*{3}}
\put(80,40){\circle*{3}}

\put(0,50){\textcolor{gray1}{\circle*{3}}}
\put(10,50){{\circle{3}}}
\put(20,50){\textcolor{green}{\circle*{3}}} 
\put(30,50){\textcolor{green}{\circle*{3}}} 
\put(40,50){\circle*{3}}
\put(50,50){\circle*{3}}
\put(60,50){\circle*{3}} 
\put(70,50){\circle*{3}} 
\put(80,50){\circle*{3}}

\put(0,60){\textcolor{gray1}{\circle*{3}}}
\put(10,60){{\circle{3}}}
\put(20,60){\textcolor{green}{\circle*{3}}} 
\put(30,60){\textcolor{green}{\circle*{3}}} 
\put(40,60){\circle*{3}}
\put(50,60){\circle*{3}}
\put(60,60){\circle*{3}} 
\put(70,60){\circle*{3}}
\put(80,60){\circle*{3}}

\put(0,70){\textcolor{gray1}{\circle*{3}}}
\put(10,70){{\circle{3}}}
\put(20,70){\textcolor{green}{\circle*{3}}} 
\put(30,70){{\circle*{3}}}
\put(40,70){{\circle*{3}}}
\put(50,70){\circle*{3}}
\put(60,70){\circle*{3}} 
\put(70,70){\circle*{3}} 
\put(80,70){\circle*{3}}

\put(0,80){\textcolor{gray1}{\circle*{3}}}
\put(10,80){{\circle{3}}}
\put(20,80){\textcolor{green}{\circle*{3}}} 
\put(30,80){{\circle*{3}}}
\put(40,80){{\circle*{3}}}
\put(50,80){{\circle*{3}}}
\put(60,80){\circle*{3}} 
\put(70,80){\circle*{3}} 
\put(80,80){\circle*{3}}

\put(0,90){\textcolor{gray1}{\circle*{3}}}
\put(10,90){{\circle{3}}}
\put(20,90){\textcolor{green}{\circle*{3}}} 
\put(30,90){{\circle*{3}}}
\put(40,90){{\circle*{3}}}
\put(50,90){{\circle*{3}}}
\put(60,90){{\circle*{3}}}
\put(70,90){\circle*{3}} 
\put(80,90){\circle*{3}}

\end{picture}
\centering
\caption{$ $}
{$I_1=(x^2y^5,x^3y^2,x^5y)$}\label{fig:1-ex4}
\end{minipage}
 \hfill
\begin{minipage}[c]{0.3\textwidth}
\centering
\begin{picture}(100,95)
\definecolor{gray1}{gray}{0.7}
\definecolor{gray2}{gray}{0.85}
\definecolor{green}{RGB}{0,124,0}

\textcolor{gray2}{\put(0,20){\vector(1,0){90}}}
\textcolor{gray2}{\put(10,10){\vector(0,1){88}}}

\put(0,10){\textcolor{gray1}{\circle*{3}}}
\put(10,10){\textcolor{gray1}{\circle*{3}}}
\put(20,10){\textcolor{gray1}{\circle*{3}}} 
\put(30,10){\textcolor{red}{\circle*{3}}} 
\put(40,10){\textcolor{red}{\circle*{3}}} 
\put(50,10){\textcolor{red}{\circle*{3}}} 
\put(60,10){\textcolor{red}{\circle*{3}}} 
\put(70,10){\textcolor{red}{\circle*{3}}} 
\put(80,10){\textcolor{red}{\circle*{3}}}

\put(0,20){\textcolor{gray1}{\circle*{3}}}
\put(10,20){\circle{3}} 
\put(20,20){\textcolor{green}{\circle*{3}}} 
\put(30,20){\textcolor{green}{\circle*{3}}} 
\put(40,20){\textcolor{green}{\circle*{3}}} 
\put(50,20){\textcolor{green}{\circle*{3}}} 
\put(60,20){\circle*{3}}
\put(70,20){\circle*{3}}
\put(80,20){\circle*{3}}

\put(0,30){\textcolor{gray1}{\circle*{3}}}
\put(10,30){\textcolor{blue}{\circle*{3}}} 
\put(20,30){\textcolor{green}{\circle*{3}}} 
\put(30,30){\textcolor{green}{\circle*{3}}} 
\put(40,30){\textcolor{green}{\circle*{3}}} 
\put(50,30){\textcolor{green}{\circle*{3}}} 
\put(60,30){{\circle*{3}}} 
\put(70,30){\circle*{3}}
\put(80,30){\circle*{3}}

\put(0,40){\textcolor{gray1}{\circle*{3}}}
\put(10,40){\textcolor{blue}{\circle*{3}}} 
\put(20,40){\textcolor{green}{\circle*{3}}} 
\put(30,40){\textcolor{green}{\circle*{3}}} 
\put(40,40){{\circle*{3}}}
\put(50,40){{\circle*{3}}}
\put(60,40){\circle*{3}} 
\put(70,40){\circle*{3}}
\put(80,40){\circle*{3}}

\put(0,50){\textcolor{gray1}{\circle*{3}}}
\put(10,50){\textcolor{blue}{\circle*{3}}} 
\put(20,50){\textcolor{green}{\circle*{3}}} 
\put(30,50){\textcolor{green}{\circle*{3}}} 
\put(40,50){\circle*{3}}
\put(50,50){\circle*{3}}
\put(60,50){\circle*{3}} 
\put(70,50){\circle*{3}} 
\put(80,50){\circle*{3}}

\put(0,60){\textcolor{gray1}{\circle*{3}}}
\put(10,60){\textcolor{blue}{\circle*{3}}} 
\put(20,60){\textcolor{green}{\circle*{3}}} 
\put(30,60){\textcolor{green}{\circle*{3}}} 
\put(40,60){\circle*{3}}
\put(50,60){\circle*{3}}
\put(60,60){\circle*{3}} 
\put(70,60){\circle*{3}}
\put(80,60){\circle*{3}}

\put(0,70){\textcolor{gray1}{\circle*{3}}}
\put(10,70){\textcolor{blue}{\circle*{3}}} 
\put(20,70){{\circle*{3}}}
\put(30,70){{\circle*{3}}}
\put(40,70){{\circle*{3}}}
\put(50,70){\circle*{3}}
\put(60,70){\circle*{3}} 
\put(70,70){\circle*{3}} 
\put(80,70){\circle*{3}}

\put(0,80){\textcolor{gray1}{\circle*{3}}}
\put(10,80){\textcolor{blue}{\circle*{3}}} 
\put(20,80){{\circle*{3}}}
\put(30,80){{\circle*{3}}}
\put(40,80){{\circle*{3}}}
\put(50,80){{\circle*{3}}}
\put(60,80){\circle*{3}} 
\put(70,80){\circle*{3}} 
\put(80,80){\circle*{3}}

\put(0,90){\textcolor{gray1}{\circle*{3}}}
\put(10,90){\textcolor{blue}{\circle*{3}}} 
\put(20,90){{\circle*{3}}}
\put(30,90){{\circle*{3}}}
\put(40,90){{\circle*{3}}}
\put(50,90){{\circle*{3}}}
\put(60,90){{\circle*{3}}}
\put(70,90){\circle*{3}} 
\put(80,90){\circle*{3}}

\end{picture}
\centering
\caption{$ $}
{$I_2=(xy^5,x^3y^2,x^5)$}\label{fig:2-ex4}
\end{minipage}
 \hfill
\begin{minipage}[c]{0.3\textwidth}
\centering
\begin{picture}(100,95)
\definecolor{gray1}{gray}{0.7}
\definecolor{gray2}{gray}{0.85}
\definecolor{green}{RGB}{0,124,0}

\textcolor{gray2}{\put(0,20){\vector(1,0){90}}}

\textcolor{gray2}{\put(10,10){\vector(0,1){88}}}

\put(0,10){\textcolor{gray1}{\circle*{3}}}
\put(10,10){\textcolor{gray1}{\circle*{3}}}
\put(20,10){\textcolor{gray1}{\circle*{3}}} 
\put(30,10){\textcolor{gray1}{\circle*{3}}} 
\put(40,10){\textcolor{red}{\circle*{3}}} 
\put(50,10){\textcolor{red}{\circle*{3}}} 
\put(60,10){\textcolor{red}{\circle*{3}}} 
\put(70,10){\textcolor{red}{\circle*{3}}} 
\put(80,10){\textcolor{red}{\circle*{3}}}

\put(0,20){\textcolor{gray1}{\circle*{3}}}
\put(10,20){\circle{3}} 
\put(20,20){\textcolor{green}{\circle*{3}}} 
\put(30,20){\textcolor{green}{\circle*{3}}} 
\put(40,20){\textcolor{green}{\circle*{3}}} 
\put(50,20){\textcolor{green}{\circle*{3}}} 
\put(60,20){\circle*{3}}
\put(70,20){\circle*{3}}
\put(80,20){\circle*{3}}

\put(0,30){\textcolor{gray1}{\circle*{3}}}
\put(10,30){\textcolor{green}{\circle*{3}}} 
\put(20,30){\textcolor{green}{\circle*{3}}} 
\put(30,30){\textcolor{green}{\circle*{3}}} 
\put(40,30){\textcolor{green}{\circle*{3}}} 
\put(50,30){\textcolor{green}{\circle*{3}}} 
\put(60,30){{\circle*{3}}} 
\put(70,30){\circle*{3}}
\put(80,30){\circle*{3}}

\put(0,40){\textcolor{gray1}{\circle*{3}}}
\put(10,40){\textcolor{green}{\circle*{3}}} 
\put(20,40){\textcolor{green}{\circle*{3}}} 
\put(30,40){\textcolor{green}{\circle*{3}}} 
\put(40,40){{\circle*{3}}}
\put(50,40){{\circle*{3}}}
\put(60,40){\circle*{3}} 
\put(70,40){\circle*{3}}
\put(80,40){\circle*{3}}

\put(0,50){\textcolor{red}{\circle*{3}}}
\put(10,50){\textcolor{green}{\circle*{3}}} 
\put(20,50){\textcolor{green}{\circle*{3}}} 
\put(30,50){\textcolor{green}{\circle*{3}}} 
\put(40,50){\circle*{3}}
\put(50,50){\circle*{3}}
\put(60,50){\circle*{3}} 
\put(70,50){\circle*{3}} 
\put(80,50){\circle*{3}}

\put(0,60){\textcolor{red}{\circle*{3}}}
\put(10,60){\textcolor{green}{\circle*{3}}} 
\put(20,60){\textcolor{green}{\circle*{3}}} 
\put(30,60){\textcolor{green}{\circle*{3}}} 
\put(40,60){\circle*{3}}
\put(50,60){\circle*{3}}
\put(60,60){\circle*{3}} 
\put(70,60){\circle*{3}}
\put(80,60){\circle*{3}}

\put(0,70){\textcolor{red}{\circle*{3}}}
\put(10,70){{\circle*{3}}}
\put(20,70){{\circle*{3}}}
\put(30,70){{\circle*{3}}}
\put(40,70){{\circle*{3}}}
\put(50,70){\circle*{3}}
\put(60,70){\circle*{3}} 
\put(70,70){\circle*{3}} 
\put(80,70){\circle*{3}}

\put(0,80){\textcolor{red}{\circle*{3}}}
\put(10,80){{\circle*{3}}}
\put(20,80){{\circle*{3}}}
\put(30,80){{\circle*{3}}}
\put(40,80){{\circle*{3}}}
\put(50,80){{\circle*{3}}}
\put(60,80){\circle*{3}} 
\put(70,80){\circle*{3}} 
\put(80,80){\circle*{3}}

\put(0,90){\textcolor{red}{\circle*{3}}}
\put(10,90){{\circle*{3}}}
\put(20,90){{\circle*{3}}}
\put(30,90){{\circle*{3}}}
\put(40,90){{\circle*{3}}}
\put(50,90){{\circle*{3}}}
\put(60,90){{\circle*{3}}}
\put(70,90){\circle*{3}} 
\put(80,90){\circle*{3}}

\end{picture} 
\centering
\caption{$ $}
{$I_3=(y^5,x^3y^2,x^5)$}\label{fig:3-ex2}
\end{minipage}

\end{figure}
\end{example}

Some derivations appearing in \cref{main-classification} are zero derivations. In the following lemma, we provide a criterion to determine when a homogeneous derivation on $\KK[S]$ is the trivial derivation on $\KK[S]/I$.

\begin{lemma} \label{lemma: trivial-derivation}
 A homogeneous derivation $\overline{\partial}_{\alpha,p}\colon \KK[S]/I\to \KK[S]/I$ with $p\in N_{\KK}$ and $\alpha\in M$ is trivial if and only if 
 $$\big((S\setminus p^\bot)+\alpha\big)\subset \supp(I)\,.$$
 \end{lemma}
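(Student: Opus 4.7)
The plan is to work directly with the formula $\overline{\partial}_{\alpha,p}(\overline{\XX}^m)=p(m)\,\overline{\XX}^{m+\alpha}$ for $m\in S$ that comes from \eqref{equation:homogeneous}. Since the monomials $\{\overline{\XX}^m\mid m\in S\setminus\supp(I)\}$ form a $\KK$-basis of $\KK[S]/I$, the derivation $\overline{\partial}_{\alpha,p}$ is trivial if and only if it vanishes on every such $\overline{\XX}^m$. For $m\in \supp(I)$ the identity $\overline{\partial}_{\alpha,p}(\overline{\XX}^m)=0$ is automatic, since $\overline{\XX}^m=0$. So the condition to check reduces to: for every $m\in S\setminus\supp(I)$, either $p(m)=0$ or $m+\alpha\in\supp(I)$.

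For the $\Leftarrow$ direction, assume $(S\setminus p^\bot)+\alpha\subset\supp(I)$ and take any $m\in S$. If $m\in p^\bot$ then $p(m)\overline{\XX}^{m+\alpha}=0$ directly; otherwise $m\in S\setminus p^\bot$ and the hypothesis gives $m+\alpha\in\supp(I)$, so $\overline{\XX}^{m+\alpha}=0$ in $\KK[S]/I$. In either case $\overline{\partial}_{\alpha,p}(\overline{\XX}^m)=0$, proving that $\overline{\partial}_{\alpha,p}$ is the zero derivation.

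For the $\Rightarrow$ direction, assume $\overline{\partial}_{\alpha,p}=0$ and let $m\in S\setminus p^\bot$; we must show $m+\alpha\in\supp(I)$. If $m\in S\setminus \supp(I)$, the initial reduction together with $p(m)\ne 0$ immediately forces $m+\alpha\in\supp(I)$. The remaining case is $m\in\supp(I)\setminus p^\bot$, which is the only nontrivial point: here the vanishing of $\overline{\partial}_{\alpha,p}(\overline{\XX}^m)$ is vacuous because $\overline{\XX}^m=0$, yet we still need the conclusion. To bridge this gap, we invoke the fact that $\overline{\partial}_{\alpha,p}=\pi(\partial_{\alpha,p})$ comes from a lift $\partial_{\alpha,p}\in\Der_I(\KK[S])$, so $\partial_{\alpha,p}(I)\subset I$. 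Applying this to $\XX^m\in I$ yields $p(m)\XX^{m+\alpha}\in I$, and since $p(m)\ne 0$ by assumption, we conclude $m+\alpha\in\supp(I)$, as desired.

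The main conceptual obstacle is precisely this mismatch: the straightforward vanishing test on the quotient only constrains $m\in S\setminus\supp(I)$, whereas the lemma's condition $(S\setminus p^\bot)+\alpha\subset\supp(I)$ is uniform over all of $S\setminus p^\bot$. The liftability assumption built into the notation $\overline{\partial}_{\alpha,p}$ (which is exactly $\partial_{\alpha,p}(I)\subset I$) is what closes this gap. Note that in the inner case $\alpha\in S$ this would also follow directly from $\supp(I)+S\subset\supp(I)$, but the argument via the lift covers the outer case uniformly.
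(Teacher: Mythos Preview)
Your proof is correct and ultimately coincides with the paper's argument. The only difference is organizational: the paper works with the lift $\partial_{\alpha,p}$ from the outset, observing that $\pi(\partial_{\alpha,p})=0$ is equivalent to $\partial_{\alpha,p}(\XX^m)\in I$ for every $m\in S$, which immediately gives the stated condition; you instead begin on the quotient basis $\{\overline{\XX}^m:m\in S\setminus\supp(I)\}$ and then invoke the lift $\partial_{\alpha,p}(I)\subset I$ to handle the remaining case $m\in\supp(I)\setminus p^\bot$. Both routes use exactly the same ingredient (liftability), so there is no substantive difference.
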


\begin{proof}
 The derivation $\partial=\partial_{\alpha,p}$ maps to the trivial derivation via $\pi$ if and only if for every $m\in S$ we have $\partial(\XX^m)\in I$. This holds if and only if for every $m\in S$ with $p(m)\neq 0$, we have $m+\alpha$ contained in $\supp(I)$, so $\XX^{m+\alpha}\in I$. This last statement is equivalent to the one given in the lemma. 
\end{proof}

\begin{remark}
 It is sufficient to verify the condition in \cref{lemma: trivial-derivation} on a set of generators of $S$.
\end{remark}

\section{Automorphism group of monomial algebras}\label{section 4}

In this section, we apply our classification of homogeneous locally nilpotent derivations on $\KK[S]/I$ to describe $\operatorname{Aut}_\KK(\KK[S]/I)$ in the case where $I$ is a monomial ideal with cofinite support and $S$ is the first octant. Our approach is inspired by Demazure's description of $\Aut(X)$ for a complete smooth toric variety $X,$ see \cite{demazure1970sous}, which was revisited in modern terms by several of the authors \cite{liendo2010affine,LL21, DL24}, see also \cite{C95}. In the following proposition, we prove that the automorphism group $\Aut_\KK(\KK[S]/I)$ is linear algebraic. The proposition is well known \cite[Section~7.6, Exercise~3]{H75}, for the convenience of the reader, we provide a short proof.

\begin{proposition}\label{propositio: generating}
Let $A$ be a $\KK$-algebra that is finite-dimensional as $\KK$-vector space. Then the automorphism group of $A$ is linear algebraic.
\end{proposition}

\begin{proof}
Since $A$ is finite-dimensional as vector space over $\KK$ and every automorphism $\varphi\colon A \to A$ is also an invertible linear map, we have that $\Aut_\KK(A)$ embeds in $\operatorname{GL}(A)$. Moreover, $\varphi \in \operatorname{GL}(A)$ belongs to $\Aut_\KK(A)$ if and only if $\varphi(fg)=\varphi(f) \cdot \varphi(g)$ for all $f,g$ in a $\KK$-basis of $A$, which is a closed condition. We conclude that $\Aut_\KK(A)$ is a closed subgroup of $\operatorname{GL}(A)$ and thus is linear algebraic.
\end{proof}

Let now $T=\spec \KK[M]$. Then, $T$ acts faithfully on $\KK[S]$ via the usual toric action given by the comorphism
$$\KK[S] \to \KK[M] \otimes \KK[S] \quad \text{defined by} \quad \XX^m \mapsto \XX^m \otimes \XX^m.$$
A closed point $t \in T$ corresponds to a maximal ideal of $\KK[M]$ and, in turn, corresponds to a homomorphism $t\colon M \rightarrow \KK^*$. Indeed, $t$ induces a homomorphism $\KK[M] \rightarrow \KK$ via $\XX^m \mapsto t(m)$, and the maximal ideal defining the point $t$ is the kernel of this homomorphism. With this notation, the action of a closed point $t \in T$ on $\KK[S]$ is given by 
\begin{align} \label{eq:torus-action}
\KK[S] \to \KK[S] \quad \text{defined by} \quad \XX^m \mapsto t \cdot \XX^m = t(m)\XX^m.
\end{align} 
In particular, the $T$-action preserves every monomial ideal $I$ and thus defines a $T$-action on $\KK[S]/I$.

\begin{definition} \label{def:full}
Let $S$ be a semigroup and let $I$ be a monomial ideal of $\KK[S]$. We say that $I$ is \emph{full} if no ray generator of $S$ is contained in $\supp(I)$. 
\end{definition}

In the case that a monomial ideal $I$ in $\KK[S]$ is not full, let $S'$ be the smallest subsemigroup of $S$ containing $S \setminus \supp(I)$, and let $I' = I \cap \KK[S']$. Then, we have that $\KK[S]/I = \KK[S']/I'$ and $I'$ is full.

\begin{example}
If $S$ is the first octant as in \cref{ex:polynomial-ring}, then the action of $T$ on $\KK[S]/I$ is given by component-wise multiplication. Furthermore, a monomial ideal $I \subset \KK[S]$ is full if and only if no variable $x_i$ belongs to $I$.
\end{example}

\begin{lemma} \label{lem:torus-faithful}
Let $S$ be a semigroup and let $I$ be a monomial ideal of $\KK[S].$ If $I$ is full, then $T$ acts faithfully on $\KK[S]/I$.
\end{lemma}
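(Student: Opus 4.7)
The plan is to translate faithfulness of the $T$-action on $\KK[S]/I$ into the purely lattice-theoretic statement that $S\setminus\supp(I)$ generates $M$ as a group, and then use fullness to verify this. A closed point $t\in T$ corresponds to a group homomorphism $t\colon M\to\KK^{*}$, and by \eqref{eq:torus-action} the induced action on $\KK[S]/I$ sends each nonzero monomial $\overline{\XX}^{m}$ (with $m\in S\setminus\supp(I)$) to $t(m)\overline{\XX}^{m}$. Since those monomials form a $\KK$-basis of $\KK[S]/I$, the element $t$ acts as the identity if and only if $t(m)=1$ for every $m\in S\setminus\supp(I)$.

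Let $\Lambda\subseteq M$ denote the subgroup generated by $S\setminus\supp(I)$. The condition $t|_{S\setminus\supp(I)}=1$ is equivalent to $t|_{\Lambda}=1$, so the kernel of the $T$-action on $\KK[S]/I$ is exactly $\operatorname{Hom}(M/\Lambda,\KK^{*})\subseteq T$. Because $\KK$ has characteristic zero, this kernel is trivial precisely when $\Lambda=M$. Hence the lemma reduces to showing that $S\setminus\supp(I)$ generates $M$ as a group.

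By definition of fullness, $S(1)\subseteq S\setminus\supp(I)\subseteq\Lambda$. For the polynomial ring $S=\ZZ^{n}_{\geq 0}$, the ray generators $S(1)=\{e_{1},\dots,e_{n}\}$ form a $\ZZ$-basis of $M=\ZZ^{n}$, so $\Lambda=M$ is immediate. For a general semigroup, I would appeal to the canonical reduction stated just before the lemma: after replacing $(S,I)$ by $(S',I')$, where $S'$ is the smallest subsemigroup of $S$ containing $S\setminus\supp(I)$, the set $S\setminus\supp(I)=S'\setminus\supp(I')$ generates $S'$ as a semigroup, and minimal embedding of $S'$ in its ambient group then forces $\Lambda=M$.

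The only delicate point is this final step in the general setting, since fullness by itself does not force the ray generators to $\ZZ$-span $M$. Once one either restricts to the polynomial ring (as in the main application) or uses the canonical normalization of $(S,I)$, the argument is a direct unwinding of the $T$-action and the definition of $\supp(I)$.
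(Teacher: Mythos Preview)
Your reduction to the lattice-theoretic statement $\Lambda = M$, where $\Lambda$ is the subgroup generated by $S \setminus \supp(I)$, is exactly right, and for $S = \ZZ_{\geq 0}^n$ your argument via $S(1) = \{e_1, \ldots, e_n\}$ is correct and complete. This is also the only case in which the paper actually uses the lemma.

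For a general semigroup your hesitation is well-founded; in fact the lemma as stated is \emph{false}. Take $M = \ZZ^2$, let $S$ be the saturated semigroup $\omega_S \cap M$ with $\omega_S = \cone((1,0),(1,2))$, and let $I = (\XX^{(1,1)})$. Then $S(1) = \{(1,0),(1,2)\}$ and neither ray generator lies in $\supp(I) = (1,1) + S$, so $I$ is full; yet one checks that $S \setminus \supp(I) = \{(c,0) : c \geq 0\} \cup \{(c,2c) : c \geq 0\}$, which generates only the index-$2$ sublattice $\{(a,b) \in \ZZ^2 : b \in 2\ZZ\}$. The character $t(a,b) = (-1)^b$ is then a nontrivial element of $T$ acting trivially on $\KK[S]/I$.

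The paper's own proof argues that fullness makes $S \setminus \supp(I)$ span $M_\RR$ and then concludes $t|_M = 1$; that step conflates $\RR$-spanning with $\ZZ$-generating and is precisely the gap you flagged. Your proposed workaround via the canonical reduction to $(S',I')$ does not repair the general statement either: passing to $S'$ replaces $M$ by the group $M'$ generated by $S'$, which in the example above is the proper sublattice $\Lambda$, so the torus itself shrinks and one is no longer proving faithfulness of the original $T = \spec\KK[M]$.
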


\begin{proof}
Let $t \in T$ act as the identity on $\KK[S]/I$. By \eqref{eq:torus-action}, we have $t\cdot\XX^m = t(m)\XX^m = \XX^m$ for all $m \in S \setminus \supp(I)$. Thus, $t(m) = 1$ for all $m \in S \setminus \supp(I)$. If $I$ is full, then $S \setminus \supp(I)$ spans $M_\RR$. Therefore, $t(m) = 1$ for all $m \in M$, and hence $t$ is the identity in $T$.
\end{proof}

Under the condition of \cref{lem:torus-faithful}, we denote the image of $T$ inside $\Aut_\KK(\KK[S]/I)$ also by $T$. In the next lemma, we show that $T$ is a maximal torus in $\Aut_\KK(\KK[S]/I)$ whenever $S$ is the first octant.

\begin{lemma}\label{lemma: torus}
Let $S$ be the first octant and let $I$ be a full monomial ideal of $\KK[S]$ with cofinite support. We also let $T=\spec \KK[M]\subset \Aut_\KK(\KK[S]/I)$. Then, the centralizer of $T$ equals $T$. In particular, $T$ is a maximal torus of $\Aut_\KK(\KK[S]/I)$.
\end{lemma}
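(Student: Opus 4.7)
The plan is to identify the centralizer of $T$ with the group of graded automorphisms of $B = \KK[S]/I$ and then use that $I$ is full to show every graded automorphism is a torus element.

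First, I observe that an automorphism $\varphi \in \Aut_\KK(B)$ commutes with every $t \in T$ if and only if $\varphi$ preserves the $M$-graded decomposition $B = \bigoplus_{m \in M} B_m$. Indeed, for $b \in B_m$ one has $t \cdot b = t(m) b$, so writing $\varphi(b) = \sum_{m'} b_{m'}$ with $b_{m'} \in B_{m'}$, the equality $\varphi(t \cdot b) = t \cdot \varphi(b)$ for all $t \in T$ forces $b_{m'} = 0$ whenever $m' \neq m$. Conversely, any grading-preserving automorphism clearly commutes with $T$.

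Next, since $S = \ZZ_{\geq 0}^n$ and $I$ is a monomial ideal, each piece $B_m$ is one-dimensional, spanned by $\overline{\XX}^m$, for $m \in S \setminus \supp(I)$, and zero otherwise. So any $\varphi \in Z(T)$ acts on each nonzero $B_m$ by a scalar: $\varphi(\overline{\XX}^m) = \lambda_m \overline{\XX}^m$ with $\lambda_m \in \KK^*$. Because $I$ is full, none of the standard basis vectors $e_1,\dots,e_n$ lies in $\supp(I)$, so each $\overline{x_i} = \overline{\XX}^{e_i}$ is nonzero in $B$ and $\lambda_{e_i} \in \KK^*$ is well defined.

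Now I define $t \in T$ to be the unique character of $M$ with $t(e_i) = \lambda_{e_i}$ for $1 \leq i \leq n$; this is well defined because $\{e_1,\dots,e_n\}$ is a $\ZZ$-basis of $M$. For any $m = (m_1,\ldots,m_n) \in S \setminus \supp(I)$, using that $\varphi$ is a ring homomorphism and $\overline{\XX}^m = \overline{x_1}^{m_1}\cdots \overline{x_n}^{m_n}$, I compute
$$\varphi(\overline{\XX}^m) = \varphi(\overline{x_1})^{m_1}\cdots \varphi(\overline{x_n})^{m_n} = \lambda_{e_1}^{m_1}\cdots \lambda_{e_n}^{m_n}\,\overline{\XX}^m = t(m)\,\overline{\XX}^m,$$
which is exactly the action of $t$ on $\overline{\XX}^m$. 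Since the $\overline{\XX}^m$ with $m \in S \setminus \supp(I)$ form a $\KK$-basis of $B$, this gives $\varphi = t \in T$, proving $Z(T) \subseteq T$; the reverse inclusion is automatic as $T$ is abelian.

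For the ``in particular'' claim, let $T' \subseteq \Aut_\KK(B)$ be any torus containing $T$. Since $T'$ is abelian, $T' \subseteq Z(T) = T$, so $T' = T$. Hence $T$ is a maximal torus of $\Aut_\KK(B)$. The main subtlety in this argument is using fullness at exactly the right moment: without it the $\lambda_{e_i}$ need not exist, and there would be no canonical candidate for the torus element realizing $\varphi$; the cofinite support hypothesis is only used implicitly via \cref{propositio: generating} to ensure $\Aut_\KK(B)$ is linear algebraic so that ``maximal torus'' makes sense.
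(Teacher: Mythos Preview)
Your proof is correct and follows essentially the same approach as the paper's: both arguments show that commuting with $T$ forces $\varphi(\overline{\XX}^{m}) = \lambda_m \overline{\XX}^{m}$, then invoke fullness to ensure the scalars $\lambda_{e_i}$ exist and determine a torus element $t$ with $\varphi = t$. Your framing of ``centralizer of $T$ equals graded automorphisms'' is a clean way to package the first step, but the underlying computation and the use of fullness are identical to the paper.
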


\begin{proof}
Let $g\in \Aut_\KK(\KK[S]/I)$ be defined by 
$$g(\overline{\XX}^{m'})=\sum_{m\in S\setminus \supp(I)} a_{m',m}\cdot\overline{\XX}^m\quad\mbox{for all}\quad m'\in S\setminus \supp(I)\,.$$ 
Assume that $g$ belongs to the centralizer of $T$, that is, $t\circ g(\overline{\XX}^{m'})=g\circ t(\overline{\XX}^{m'})$, for all $t\in T$. This implies 
$$\sum_{m\in S\setminus \supp(I)} t(m)\cdot a_{m',m}\cdot\overline{\XX}^m= \sum_{m\in S\setminus \supp(I)} t(m')\cdot a_{m',m}\cdot\overline{\XX}^m, \quad\mbox{for all }t\in T\,.$$
This equality can only hold if $a_{m',m}=0$ for all $m\neq m'$. Letting $a_{m,m}=a_{m}$, we conclude 
\begin{align} \label{eq:no-bigger}
g(\overline{\XX}^{m})=a_{m}\overline{\XX}^{m}\quad\mbox{for all}\quad m\in S\setminus \supp(I)\,.
\end{align} 
Finally, since the basis vector $e_i$ belongs to $S\setminus \supp(I)$ as $S$ is the first octant and $I$ is full, for every $m=(m_1,\ldots,m_n)\in M$, we have that $a_m=\prod a_{e_i}^{m_i}$ and so $g\in T$ by \eqref{eq:torus-action}. This proves, in particular, that the centralizer of $T$ is $T$ itself, thus $T$ is a maximal torus of $\Aut_\KK(\KK[S]/I)$.
\end{proof}

The hypothesis that $S$ is the first octant in the above lemma is essential, as \cref{example: non maximal torus} below shows. Due to this, in what follows we will assume that $S$ is the first octant, unless otherwise specified.

\begin{example}\label{example: non maximal torus}
Let $S$ be the semigroup $\mathbb{Z}_{\geq 0}\{(1,0),(1,1),(1,2)\}$. We consider the monomial ideal $I \subset \KK[S]$ given by $I=(\XX^{(2,0)},\XX^{(2,1)},\XX^{(2,2)},\XX^{(2,3)},\XX^{(2,4)})$. Thus, $\KK[S]/I \cong \KK \cdot \overline{\XX}^{(0,0)} \oplus \KK \cdot \overline{\XX}^{(1,0)} \oplus \KK \cdot \overline{\XX}^{(1,1)} \oplus \KK \cdot \overline{\XX}^{(1,2)}$. If $g$ is an element of the centralizer of $(\KK^*)^2$, then, using the same argument as in the proof of \cref{lemma: torus} up to \eqref{eq:no-bigger}, we conclude that
$$ g(\overline{\XX}^{(1,0)}) = a_{(1,0)}\overline{\XX}^{(1,0)}, \quad g(\overline{\XX}^{(1,1)}) = a_{(1,1)}\overline{\XX}^{(1,1)}, \quad \text{and} \quad g(\overline{\XX}^{(1,2)}) = a_{(1,2)}\overline{\XX}^{(1,2)}. $$
However, these three coefficients $a_{(1,0)}$, $a_{(1,1)}$, and $a_{(1,2)}$ are algebraically independent since the relation $ a_{(1,0)} \cdot a_{(1,2)} = a^2_{(1,1)} $ does not hold in the quotient. This implies that there is a 3-dimensional torus acting faithfully on $\KK[S]/I$ that is an extension of $T$. In particular, the centralizer of $T$ is larger than $T$.

Furthermore, a straightforward verification shows that $\KK[S]/I$ is isomorphic to $\KK[S']/I'$ with $S'$ being the first octant in $M' = \mathbb{Z}^3$ and $I' = (\XX^{(2,0,0)}, \XX^{(0,2,0)}, \XX^{(0,0,2)}, \XX^{(1,1,0)}, \XX^{(1,0,1)}, \XX^{(0,1,1)})$. Now, \cref{lemma: torus} shows that a maximal torus of $\Aut_\KK(\KK[S']/I') \cong \Aut_\KK(\KK[S]/I)$ is indeed 3-dimensional.
\end{example}

\begin{definition}\label{def:root-subgroup}
Let $G$ be a linear algebraic group, and let $T \subset G$ be a maximal torus. An algebraic subgroup $U \subset G$, isomorphic to $\mathbb{G}_a$, is called a root subgroup with respect to $T$ if there exists a character $\XX^\alpha$ of $T$, referred to as the weight of $U$, such that
$$
t \circ \varepsilon(s) \circ t^{-1} = \varepsilon(\XX^\alpha(t) \cdot s) \quad \mbox{for all } t \in T \mbox{ and all } s \in \mathbb{G}_a\,,
$$
where $\varepsilon \colon \mathbb{G}_a \to U$ is a fixed isomorphism. The weight of a root subgroup $U$ does not depend on the choice of the isomorphism $\varepsilon$.
\end{definition}

With this definition, we arrive at the following well-known result. For the convenience of the reader, we provide a short proof.

\begin{proposition}
 Let $G$ be a connected linear algebraic group, and let $T$ be a maximal torus of $G$. Then $G$ is generated as a group by $T$ and all root subgroups.
\end{proposition}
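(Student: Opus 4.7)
The plan is to let $H \subseteq G$ be the closed connected subgroup generated by $T$ together with all maximal generalized root subgroups with respect to $T$. The subgroup generated by any family of connected closed subgroups of an algebraic group is itself closed and connected, so $H$ is a closed connected subgroup of $G$. Since both $G$ and $H$ are connected, it suffices to show that $\Lie(H) = \mathfrak{g}$, where $\mathfrak{g} = \Lie(G)$.

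The approach is to use the weight-space decomposition of $\mathfrak{g}$ under the adjoint action of $T$:
$$\mathfrak{g} = \mathfrak{g}_0 \oplus \bigoplus_{\alpha \in X(T)\setminus\{0\}} \mathfrak{g}_\alpha, \quad \text{where} \quad \mathfrak{g}_0 = \Lie(C_G(T)).$$
The first key step is to show that every $X \in \mathfrak{g}_\alpha$ with $\alpha \neq 0$ is ad-nilpotent. Arguing via the Jordan decomposition $X = X_s + X_n$ in $\mathfrak{g}$, both $X_s$ and $X_n$ lie in $\mathfrak{g}_\alpha$; if $X_s$ were non-zero, it would integrate to a one-dimensional torus that is normalized but not centralized by $T$, and analyzing the resulting connected solvable subgroup $T \cdot \langle \exp(sX_s) \rangle$ contradicts the maximality of $T$. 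Once ad-nilpotency is established, the one-parameter subgroup $\varepsilon_X \colon \GA \to G$ defined by $s \mapsto \exp(sX)$ is well-defined, and the identity $\operatorname{Ad}(t)X = \alpha(t)X$ yields $t\,\varepsilon_X(s)\,t^{-1} = \varepsilon_X(\alpha(t)\,s)$, exhibiting $\varepsilon_X(\GA)$ as a generalized root subgroup of weight $\alpha$. A standard dimension argument places it inside a maximal such subgroup, so $X \in \Lie(H)$, whence $\bigoplus_{\alpha \neq 0} \mathfrak{g}_\alpha \subseteq \Lie(H)$.

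For the weight-zero component, I would invoke Chevalley's theorem that $C_G(T)$ is connected together with the fact that $T$ is a central maximal torus of $C_G(T)$. This forces $C_G(T) = T \cdot U$, where $U$ is a connected unipotent normal subgroup (the unipotent radical of $C_G(T)$). Since $U$ is generated by its one-parameter subgroups and each such subgroup commutes with $T$, these are one-dimensional generalized root subgroups of weight $0$, and each is contained in a maximal one; hence $\Lie(U) \subseteq \Lie(H)$. Combined with $\mathfrak{t} = \Lie(T) \subseteq \Lie(H)$, this yields $\mathfrak{g}_0 = \mathfrak{t} \oplus \Lie(U) \subseteq \Lie(H)$, and therefore $\Lie(H) = \mathfrak{g}$, which gives $H = G$.

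The main obstacle will be the ad-nilpotency step. Although standard, a clean self-contained derivation requires careful analysis of a non-abelian connected solvable subgroup of $G$ and uses the maximality of $T$ in an essential way to rule out a transversal one-dimensional torus; in a more polished exposition one might prefer to invoke the structure theorem for connected solvable algebraic groups directly. A secondary but minor point is to justify that every generalized root subgroup is contained in a maximal one, which follows from the dimension bound on increasing chains of algebraic subgroups of $G$.
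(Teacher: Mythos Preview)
Your approach via the Lie-algebra weight decomposition is genuinely different from the paper's. The paper invokes the Levi decomposition $G = L \ltimes R$: the reductive Levi factor $L \supset T$ is generated by $T$ and its one-dimensional root subgroups by a standard citation to Springer, while the unipotent radical $R$ decomposes $T$-equivariantly as a product of root subgroups, each of which lies in a maximal generalized root subgroup of the corresponding weight. Your route avoids the Levi machinery and works infinitesimally, at the price of having to integrate individual weight vectors back into $G$.

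There is, however, a genuine gap in your nilpotency step. The claim that a nonzero semisimple $X_s \in \mathfrak{g}_\alpha$ ``would integrate to a one-dimensional torus'' is not correct as stated: the map $s \mapsto \exp(sX_s)$ is in general transcendental, and a semisimple element of $\mathfrak{g}$ need not span the Lie algebra of any one-dimensional algebraic subtorus (take $X_s = \operatorname{diag}(1,\sqrt{2}) \in \mathfrak{gl}_2$). Hence the solvable subgroup $T \cdot \langle \exp(sX_s)\rangle$ on which your contradiction relies is simply not available in the algebraic category. The conclusion you need---that every $X \in \mathfrak{g}_\alpha$ with $\alpha \neq 0$ is nilpotent (not merely ad-nilpotent, which is what is required for $s\mapsto\exp(sX)$ to be an algebraic morphism $\GA\to G$)---does hold, but by a different argument: in any faithful representation the relation $tXt^{-1} = \alpha(t)X$ forces the spectrum of $X$ to be stable under scaling by every value of $\alpha$, hence to be $\{0\}$. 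With this repair the rest of your outline, including the treatment of $\mathfrak{g}_0$ via $C_G(T) = T \cdot R_u(C_G(T))$, goes through.
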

\begin{proof}
Since the characteristic of $\KK$ is zero, there exists a Levi subgroup $L \subset G$, that is, a reductive subgroup such that $G = L \ltimes R$, where $R$ is the unipotent radical of $G$ \cite[Definition~11.22]{Bor91}. Furthermore, we can assume that $T \subset L$. Moreover, all root subgroups of $L$ with respect to $T$, together with $T$, generate $L$ \cite[Proposition~8.1.1.]{Spr09}. Since $G = L \ltimes R$ and $T \subset L$, we have that $T$ acts on the Lie algebra of $R$, which then decomposes as a direct sum of root subspaces. The corresponding subgroups of $R$ are root subgroups that generate $R$. This completes the proof of the proposition.
\end{proof}

In the next lemma, we show that elements in root subgroups correspond to homogeneous locally nilpotent derivations.

\begin{lemma} \label{lem:unipotent-lnd}
Let $S$ be the first octant and let $I$ be a full monomial ideal of $\KK[S]$ with cofinite support. Suppose $g \in \Aut^0_\KK(\KK[S]/I)$ and that $g$ is contained in a root subgroup with weight $\XX^\alpha$ with respect to $T$. Then $g = \exp \overline{\partial}$, where $\overline{\partial}$ is a locally nilpotent derivation of degree $\alpha$.
\end{lemma}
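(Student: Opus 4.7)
The plan is to exhibit $\overline{\partial}$ as the infinitesimal generator of a one-parameter subgroup of $U$ passing through $g$. The case $g=\operatorname{id}$ is trivial (take $\overline{\partial}=0$), so assume $g=\varepsilon(s_0)$ with $s_0\in\GA^m\setminus\{0\}$. Since $U$ is isomorphic to $(\GA)^m$ as an algebraic group, the map $\psi\colon\GA\to U$ defined by $\psi(r)=\varepsilon(rs_0)$ is a morphism of algebraic groups, hence a $\GA$-action on $\KK[S]/I$, with $\psi(1)=g$. By the standard correspondence between $\GA$-actions and locally nilpotent derivations recalled in \eqref{eq:exponential}, there exists a locally nilpotent derivation $\overline{\partial}$ on $\KK[S]/I$ with $\psi(r)=\exp(r\overline{\partial})$ for every $r\in\GA$. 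In particular, $g=\exp(\overline{\partial})$.

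It remains to verify that $\overline{\partial}$ is homogeneous of degree $\alpha$. Under the $T$-action on $\KK[S]/I$ described in \eqref{eq:torus-action}, the $M$-grading coincides with the $T$-weight decomposition, so a derivation $\partial$ is homogeneous of degree $\alpha$ if and only if $t\circ\partial\circ t^{-1}=\XX^\alpha(t)\cdot\partial$ for every $t\in T$. Using the defining conjugation relation of \cref{def:root-subgroup}, for every $t\in T$ and every $r\in\KK$ we compute
$$t\circ\exp(r\overline{\partial})\circ t^{-1}=t\circ\psi(r)\circ t^{-1}=\varepsilon\bigl(\XX^\alpha(t)\cdot rs_0\bigr)=\psi\bigl(\XX^\alpha(t)\cdot r\bigr)=\exp\bigl(\XX^\alpha(t)\cdot r\overline{\partial}\bigr).$$
Since $\KK[S]/I$ is finite-dimensional and $\overline{\partial}$ is locally nilpotent, $\exp(r\overline{\partial})$ is polynomial in $r$; differentiating both sides of the identity above with respect to $r$ and evaluating at $r=0$ yields $t\circ\overline{\partial}\circ t^{-1}=\XX^\alpha(t)\cdot\overline{\partial}$, which is exactly the desired homogeneity.

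The argument is essentially formal once one invokes the correspondence between unipotent one-parameter subgroups and locally nilpotent derivations, together with the defining property of generalized root subgroups. I do not anticipate any substantive obstacle; the only point requiring care is that $\psi$ is a genuine morphism of algebraic groups, which is ensured by the fact that $U\cong(\GA)^m$ is given as an isomorphism of algebraic groups in \cref{def:root-subgroup}, so scalar multiplication by $s_0$ in $\GA^m$ transports to a regular one-parameter subgroup of $U$.
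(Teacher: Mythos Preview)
Your proof is correct and follows essentially the same line as the paper's: construct a one-parameter unipotent subgroup through $g$ (the paper takes the smallest algebraic subgroup containing $g$, you scale $s_0$ inside $\GA^m$---these coincide), invoke the exponential correspondence to obtain $\overline{\partial}$, and deduce homogeneity of degree $\alpha$ from the conjugation relation defining the generalized root subgroup. The only cosmetic difference is that the paper verifies the implication ``$t\circ\overline{\partial}\circ t^{-1}=\XX^\alpha(t)\,\overline{\partial}$ for all $t\in T$ $\Rightarrow$ $\overline{\partial}$ is homogeneous of degree $\alpha$'' by an explicit computation on monomials, whereas you invoke directly the identification of the $M$-grading with the $T$-weight decomposition.
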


\begin{proof}
Since $g$ is contained in a root subgroup of $\Aut^0_\KK(\KK[S]/I)$, $g$ is unipotent, so the smallest algebraic subgroup $U$ containing $g$ is a root subgroup with weight $\XX^\alpha$. The injection $U \hookrightarrow \Aut^0_\KK(\KK[S]/I)$ induces a $\GA$-action on $\KK[S]/I$. By \eqref{eq:exponential}, we have $g = \exp(s \overline{\partial})$ for some locally nilpotent derivation $\overline{\partial} \colon \KK[S]/I \to \KK[S]/I$ and some $s \in \GA$. Since $s \overline{\partial}$ is again locally nilpotent, we can assume $g = \exp \overline{\partial}$ by replacing $\overline{\partial}$ with $s \overline{\partial}$. According to \cref{def:root-subgroup},
$$t \circ \exp(\overline{\partial}) \circ t^{-1} = \exp(\XX^\alpha(t) \overline{\partial}) \quad \text{for all} \quad t \in T \text{ and all } s \in \mathbb{G}_a.$$
Since conjugation commutes with the exponential, we obtain
\begin{align} \label{element-in-root}
 t \circ \overline{\partial} \circ t^{-1} = \XX^\alpha(t) \cdot \overline{\partial} \quad \text{for all} \quad t \in T.
\end{align}
Now, letting $m' \in S \setminus \supp(I)$, we write
$$\partial(\overline{\XX}^{m'}) = \sum_{m \in S \setminus \supp(I)} a_m \overline{\XX}^m.$$
Applying \eqref{element-in-root} to $\overline{\XX}^{m'}$, we get
$$t^{-1}(m') \sum_{m \in S \setminus \supp(I)} t(m) \cdot a_m \overline{\XX}^m = t(\alpha) \sum_{m \in S \setminus \supp(I)} a_m \overline{\XX}^m.$$
This equality can only hold if $a_m = 0$ for all but possibly one value of $m$. This implies that $\partial(\overline{\XX}^{m'}) = a_m \overline{\XX}^m$, and thus $\overline{\partial}$ is homogeneous.
\end{proof}

\begin{remark}
The full strength of \cref{main-classification} is best appreciated in the infinite-dimensional setting, where all three types of degrees described therein can appear. In the finite-dimensional case with cofinite support, only conditions (i) and (ii) can hold in \cref{main-classification}, since $(\ZZ_{\geq 0}\cdot\alpha)\cap \supp(I)$ is never empty for $\alpha \in S$, as any non-zero homogeneous derivation of non-zero degree is automatically nilpotent. Furthermore, when $S$ is the first octant, the liftable homogeneous locally nilpotent derivations classified in \cref{main-classification} and \cref{lemma: trivial-derivation} account for all homogeneous locally nilpotent derivations on $\KK[S]/I$ by \cref{th:surjective derivatio}.
\end{remark}

\begin{remark}
Non-trivial liftable homogeneous locally nilpotent derivations on $\KK[S]/I$ were classified in \cref{main-classification} and \cref{lemma: trivial-derivation}. Furthermore, when $S$ is the first octant, these account for all homogeneous locally nilpotent derivations on $\KK[S]/I$ by \cref{th:surjective derivatio}. Note that in the case where $I$ has cofinite support, only conditions $(i)$ and $(ii)$ can hold in \cref{main-classification} since $(\ZZ_{\geq 0}\cdot\alpha)\cap \supp(I)$ is never empty for $\alpha \in S$. 
\end{remark}

Let $S$ be the first octant and let $I$ be a full monomial ideal with cofinite support. We denote by $\mathcal{R}(S,I)$ the subset of $M$ consisting of $\alpha$ for which there exists a non-zero liftable homogeneous locally nilpotent derivation $\overline{\partial}_{\alpha,p}$. According to \cref{lem:unipotent-lnd}, these are precisely the weights of the non-trivial root subgroups of $\Aut^0_\KK(\KK[S]/I)$. Moreover, for each $\alpha \in \mathcal{R}(S,I)$, we define the $\KK$-vector space
$$G(\alpha) = \{ p \in N_\KK \mid \overline{\partial}_{\alpha,p}\in \Der(\KK[S]) \}\,,$$ 
and we let 
\begin{align} \label{eq:iota}
\iota_{\alpha} \colon G(\alpha) \to \Aut_\KK(\KK[S]/I) \quad \text{by} \quad p \mapsto \exp \overline{\partial}_{\alpha,p},\quad \mbox{for all } \alpha \in \mathcal{R}(S,I)\,.
\end{align}

Now, we can present our main result regarding the connected component of $\Aut_\KK(\KK[S]/I)$, which directly follows from \cref{lem:unipotent-lnd} and the aforementioned considerations.

\begin{proposition} \label{prop:aut0}
Let $S$ be the first octant and let $I$ be a full monomial ideal of $\KK[S]$ with cofinite support. Also, let $T = \spec \KK[M] \subset \Aut_\KK(\KK[S]/I)$. Then, the neutral component $\Aut^0_\KK(\KK[S]/I)$ is spanned by $T$ and the images of $\iota_{\alpha}$ for all $\alpha \in \mathcal{R}(S,I)$. 
\end{proposition}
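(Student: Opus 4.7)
The plan is to combine the structure theorem for connected linear algebraic groups with the root-subgroup/locally nilpotent derivation dictionary established in \cref{lem:unipotent-lnd}.

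First, I would invoke \cref{propositio: generating} to know that $G=\Aut_\KK(\KK[S]/I)$ is a linear algebraic group, and \cref{lemma: torus} to know that $T$ is a maximal torus of $G$. Since $T\subset \Aut_\KK^0(\KK[S]/I)$ (being connected and lying in $G$), $T$ is in particular a maximal torus of the neutral component. Then I would apply the general proposition proved just above (Levi decomposition plus the standard description of reductive groups by $T$ and their one-dimensional root subgroups) to $\Aut_\KK^0(\KK[S]/I)$, concluding that this neutral component is generated by $T$ together with all its maximal generalized root subgroups with respect to $T$.

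Next, the task is to identify these maximal generalized root subgroups with the images of the maps $\iota_\alpha$ defined in \eqref{eq:iota}. Let $U\subset \Aut_\KK^0(\KK[S]/I)$ be a maximal generalized root subgroup with weight $\XX^\alpha$. For any $g\in U$, \cref{lem:unipotent-lnd} produces a homogeneous locally nilpotent derivation $\overline{\partial}$ of degree $\alpha$ such that $g=\exp\overline{\partial}$. If $U$ is non-trivial then such a derivation is non-zero, so $\alpha\in\mathcal{R}(S,I)$ by the very definition of $\mathcal{R}(S,I)$. Conversely, by \cref{main-classification} (only conditions $(i)$ and $(ii)$ survive, since $I$ has cofinite support, as noted in the remark preceding the statement), every element of $G(\alpha)$ yields a homogeneous locally nilpotent derivation $\overline{\partial}_{\alpha,p}$, whose exponential is a well-defined automorphism of $\KK[S]/I$. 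Thus $\iota_\alpha(G(\alpha))$ is a $T$-invariant unipotent subgroup on which $T$ acts with the single weight $\XX^\alpha$ by the relation $t\circ \overline{\partial}_{\alpha,p}\circ t^{-1}=\XX^\alpha(t)\cdot \overline{\partial}_{\alpha,p}$; exponentiating, it is a generalized root subgroup of weight $\XX^\alpha$. Because the argument of \cref{lem:unipotent-lnd} shows that \emph{every} element of any generalized root subgroup of weight $\XX^\alpha$ is of the form $\exp\overline{\partial}_{\alpha,p}$ with $p\in G(\alpha)$, the image of $\iota_\alpha$ contains every such root subgroup; hence $\iota_\alpha(G(\alpha))$ is the unique maximal generalized root subgroup of weight $\XX^\alpha$.

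Putting these two steps together, the collection of all maximal generalized root subgroups of $\Aut_\KK^0(\KK[S]/I)$ with respect to $T$ is exactly $\{\iota_\alpha(G(\alpha))\mid \alpha\in\mathcal{R}(S,I)\}$, and the generating proposition then gives the stated conclusion. The only real subtlety is the identification between maximal generalized root subgroups and the images of the $\iota_\alpha$; this requires both directions (every $g$ in a root subgroup arises as $\exp\overline{\partial}_{\alpha,p}$, and every $\exp\overline{\partial}_{\alpha,p}$ lies in a root subgroup of the correct weight). Both are already essentially delivered by \cref{lem:unipotent-lnd} together with the $T$-weight computation on homogeneous derivations coming from the $M$-grading, so no new calculation is needed beyond assembling these ingredients.
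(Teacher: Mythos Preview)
Your proposal is correct and follows essentially the same approach as the paper: both use the general proposition that a connected linear algebraic group is generated by a maximal torus together with its maximal generalized root subgroups, and then identify those root subgroups with the images of $\iota_\alpha$ via \cref{lem:unipotent-lnd}. The paper's own proof is simply a pointer to \cref{lem:unipotent-lnd} and the preceding discussion, so your version is, if anything, more explicit about why the images of $\iota_\alpha$ exhaust the maximal generalized root subgroups.
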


\begin{remark}
For each $\alpha\in \mathcal{R}(S,I)$, the set $L(\alpha) = \{\overline{\partial}_{\alpha,p}\mid p\in G(\alpha)\}$ is the Lie subalgebra of locally nilpotent derivations in $\operatorname{Der}(\KK[S]/I)$ with degree $\alpha$. The dimension of $L(\alpha)$ may be any integer from $1$ to $n$. Indeed, in \cref{ex:dim_geq1} below, we provide an example where $\dim L(\alpha)=n$. This is in stark contrast with the case of complete toric variety, where all $ L(\alpha) $ are 1-dimensional \cite{demazure1970sous} (see also \cite[Theorem~2~$(i)$]{LL21}). 
\end{remark}

\begin{example} \label{ex:dim_geq1}
Let $M = \mathbb{Z}^n$ and $S = \mathbb{Z}^n_{\geq 0}$. Consider the ideal $I = (x_1^3,x_2^3\dots , x_n^3)$. By \cref{main-classification} and \cref{lemma: trivial-derivation}, we have that $\alpha = (1, 1,\dots ,1) \in \mathcal{R}(S, I)$ and $G(\alpha) = N_\KK$. Furthermore, every $p = (p_1, p_2,\dots,p_n) \in G(\alpha)$ defines the following derivation and corresponding exponential automorphism
$$
\begin{array}{ccc}
 \begin{array}{rcl}\overline{\partial}_{\alpha,p} \colon \KK[S]/I& \to & \KK[S]/I \\ 
 \x_i & \mapsto & p_i\x_1\dots \x_i^2\dots \x_n  \\ 

 \end{array}
&\quad\mbox{so that}\quad&
 \begin{array}{rcl}
 \operatorname{exp}(\overline{\partial}_{\alpha,p}) \colon \KK[S]/I& \to & \KK[S]/I \\ 
 \x_i & \mapsto & \x_i+p_i\x_1\dots \x_i^2\dots \x_n \\ 

 \end{array}\\
 \end{array}\,.
$$
The Lie subalgebra $L(\alpha)=\{\overline{\partial}_{\alpha,p}\mid p\in N_\KK\}$ is $n$-dimensional in this case. 
\end{example}
\begin{definition} 
Let $S$ and $S'$ be semigroups and let $I$ and $I'$ be monomial ideals of $\KK[S]$ and $\KK[S']$, respectively. A $\KK$-algebra homomorphism $g\colon \KK[S]/I \to \KK[S']/I'$ is called a toric morphism if there exists a semigroup homomorphism $\widehat{g}\colon S \to S'$ with $\widehat{g}(\supp(I)) \subset \supp(I')$ such that $g(\overline{\XX}^m) = \overline{\XX}^{\widehat{g}(m)}$. A toric automorphism of $\KK[S]/I$ is a toric endomorphism that admits an inverse which is also toric. We denote the set of toric automorphisms of $\KK[S]/I$ by $\Aut(S,I)$. An automorphism $g \in \Aut(S,I)$ corresponds to an automorphism $\widehat{g}\colon S \to S$ such that $\widehat{g}(\supp(I)) = \supp(I)$.
\end{definition}

\begin{lemma}\label{lemma:generator}
Let $S$ be a semigroup and let $I$ be a full monomial ideal of $\KK[S]$ with cofinite support. The group $\Aut(S,I)$ is finite.
\end{lemma}
\begin{proof}
Consider $g \in \Aut(S,I)$, which corresponds to the semigroup automorphism $\widehat{g}\colon S \to S$. The map $\widehat{g}$ induces a permutation of the set of rays $S(1)$. Consequently, we obtain a homomorphism from $\Aut(S,I)$ to the symmetric group on $S(1)$. Furthermore, since $S(1)$ spans $M_\RR$ as a vector space, the action of $\widehat{g}$ on $S(1)$ uniquely determines $\widehat{g}$, implying that the homomorphism is injective.
\end{proof}

\begin{proposition} \label{prop:finite}
Let $S$ be the first octant and let $I$ be a full monomial ideal of $\KK[S]$ with cofinite support. Then $\Aut_\KK(\KK[S]/I)/\Aut_\KK^0(\KK[S]/I)$ is a finite group generated by the images of toric morphisms.
\end{proposition}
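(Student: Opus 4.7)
The plan is as follows. Finiteness of $G/G^0$, with $G := \Aut_\KK(\KK[S]/I)$ and $G^0 := \Aut_\KK^0(\KK[S]/I)$, is immediate from \cref{propositio: generating}: $G$ is a linear algebraic group and therefore has only finitely many connected components. The substantive part is to realize every coset of $G^0$ by a toric automorphism. For this I would combine the fact that $T := \spec \KK[M]$ is a maximal torus of $G$ (\cref{lemma: torus}) with the standard Weyl-group-type conjugation trick, and then invoke fullness of $I$ to force the relevant permutation structure.

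In detail, take $g \in G$. Since $G^0$ is normal in $G$, the conjugate $gTg^{-1}$ is again a maximal torus of $G^0$, and conjugacy of maximal tori in the connected group $G^0$ produces $h \in G^0$ with $h^{-1}gTg^{-1}h = T$, so $n := h^{-1}g$ lies in the normalizer $N_G(T)$. It therefore suffices to show that every $n \in N_G(T)$ differs from a toric automorphism by an element of $T \subset G^0$. Conjugation by $n$ defines an algebraic automorphism of $T$, hence an element $\varphi \in \operatorname{GL}(M)$. Because the $T$-weight spaces of $\KK[S]/I$ are precisely the lines $\KK \cdot \overline{\XX}^m$ indexed by $m \in S \setminus \supp(I)$, the automorphism $n$ permutes these lines via $\varphi$, i.e.\ $n(\overline{\XX}^m) = c_m \overline{\XX}^{\varphi(m)}$ for some $c_m \in \KK^*$, with $\varphi$ restricting to a bijection of $S \setminus \supp(I)$; multiplicativity of $n$ forces $c_{m+m'} = c_m c_{m'}$ whenever $m, m', m+m' \in S \setminus \supp(I)$.

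Fullness of $I$ ensures each standard basis vector $e_i$ lies in $S \setminus \supp(I)$, so both $\varphi(e_i) \in S$ and $\varphi^{-1}(e_i) \in S$. The matrix of $\varphi$ in the basis $\{e_i\}$ is thus an element of $\operatorname{GL}_n(\ZZ)$ whose entries, together with those of its inverse, are non-negative integers; an elementary argument forces such a matrix to be a permutation matrix. Consequently $\varphi$ defines a semigroup automorphism of $S$ preserving $\supp(I)$, i.e.\ an element of $\Aut(S,I)$. Since $\supp(I)$ is upward-closed, any coordinate-wise sub-point of a point in $S \setminus \supp(I)$ also lies there, so iterating $c_{m+m'} = c_m c_{m'}$ along the decomposition $m = \sum m_i e_i$ gives $c_m = \prod c_{e_i}^{m_i}$ for all $m \in S \setminus \supp(I)$. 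Defining $t \in T$ by the character $m' \mapsto \prod c_{e_i}^{(\varphi^{-1}(m'))_i}$ then yields $t^{-1} n \in \Aut(S,I)$, and hence $g \in G^0 \cdot \Aut(S,I)$.

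The main obstacle is forcing $\varphi$ to be a permutation of the basis. This step relies essentially on the fullness hypothesis, which places $\varphi(e_i)$ and $\varphi^{-1}(e_i)$ inside $S$, together with the elementary observation that an invertible integer matrix with non-negative entries whose inverse also has non-negative entries must be a permutation matrix. The remaining ingredients are standard: the reduction to the normalizer $N_G(T)$ via conjugacy of maximal tori, and the absorption of the scaling coefficients $c_m$ by a single torus element, which itself depends on the upward-closedness of $\supp(I)$ to propagate multiplicativity of $c$ from the basis vectors to all of $S \setminus \supp(I)$.
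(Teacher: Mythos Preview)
Your proof is correct and follows essentially the same approach as the paper: reduce to the normalizer of $T$ via conjugacy of maximal tori, use the $T$-weight decomposition to see that such an element permutes the monomial lines up to scalars, and then absorb the scalars with a torus element so that the remaining map is toric. Your explicit argument that the induced lattice automorphism $\varphi$ must be a permutation matrix (an element of $\operatorname{GL}_n(\ZZ)$ with non-negative entries and non-negative inverse) makes precise a step the paper leaves implicit in the sentence ``Since $g$ is an automorphism, $\widehat{g}$ is also an automorphism.''
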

\begin{proof}
Since $\Aut_\KK(\KK[S]/I)$ is a linear algebraic group by \cref{propositio: generating}, it follows that the quotient group $F=\Aut_\KK(\KK[S]/I)/\Aut_\KK^0(\KK[S]/I)$ is finite. Let $g \in \Aut_\KK(\KK[S]/I)$ and let $[g]$ be its image in $F$. As all maximal tori are conjugate in a linear algebraic group \cite[Theorem~6.3.5]{Spr09}, there exists $h \in \Aut_\KK^0(\KK[S]/I)$ such that 
$$(g \circ h)^{-1} \circ T \circ (g \circ h) = h^{-1} \circ (g^{-1} \circ T \circ g) \circ h = T\,.$$
Since $[g] = [g \circ h]$, we can replace $g$ with $g \circ h$, assuming that $g$ normalizes $T$.

Now, for $m' \in S \setminus \supp(I)$, we have $$g(\overline{\XX}^{m'}) = \sum_{m \in S \setminus \supp(I)} a_m \overline{\XX}^m\,.$$ 
The set $V = \KK \cdot \overline{\XX}^{m'} \subset \KK[S]/I$ is a 1-dimensional vector subspace of $\KK[S]/I$. Applying $g \circ T = T \circ g$ to $V$, we obtain
\begin{align*} 
 \KK \cdot \sum_{m \in S \setminus \supp(I)} a_m \overline{\XX}^m = \KK \cdot \left\{ \sum_{m \in S \setminus \supp(I)} t(m) a_m \overline{\XX}^m \mid t \in T \right\}\,.
\end{align*}
The left-hand side is a $\KK$-vector space, while the right-hand side is a $\KK$-vector space if and only if $a_m = 0$ for all but at most one $m \in S \setminus \supp(I)$. This implies that $g(\overline{\XX}^{m'}) = a_{m} \overline{\XX}^{m}$ for some $m \in S \setminus \supp(I)$.  Let now $m'= (m'_1, \ldots, m'_n)\neq 0$ and choose $1 \leq k \leq n$ such that $m'_k \neq 0$. For $t = (\lambda_1, \ldots, \lambda_n) \in T$ with $\lambda_i = 1$ for $i \neq k$ and $\lambda_k \in \KK^*\subset \KK$ a root of $p(r) = a_m r^{m'_k} - 1 \in \KK[r]$,
replacing $g$ by $g \circ t$, we obtain $[g] = [g \circ t]$ and $g(\overline{\XX}^{m'}) = \overline{\XX}^{m}$. Now, using the previous construction and the fact that $I$ is full, for each $e_i$ we fix $t_i \in T$ such that $g \circ t_i(\overline{\XX}^{e_i}) = \overline{\XX}^{m_i}$. Replacing $g$ by $g \circ t_n \circ \cdots \circ t_1$, we obtain $[g] = [g \circ t_n \circ \cdots \circ t_1]$ and $g(\overline{\XX}^{e_i}) = \overline{\XX}^{m_i}$ for all $i$. We can then define $\widehat{g} \colon S \to S$ by $\widehat{g}(e_i) = m_i$.  Since $g$ is an automorphism, $\widehat{g}$ is also an automorphism, and thus $g$ is a toric automorphism.
\end{proof}

For each $\alpha \in \mathcal{R}(S,I)$ and $p \in G(\alpha)$ such that $\overline{\partial}_{\alpha,p} \neq 0$, we denote by $U_{\alpha,p}$ the root subgroup $$U_{\alpha,p} = \{\exp(t\overline{\partial}_{\alpha,p}) \mid t \in \KK\} \subset \Aut^0_\KK(\KK[S]/I),.$$ 
If $\alpha \in M \setminus S$, then the space $G(\alpha)$ is 1-dimensional by \cref{th:surjective derivatio}, so for any two nonzero $p, q \in G(\alpha)$ we have $U_{\alpha,p} = U_{\alpha,q}$. In this case, we simply write $U_\alpha$.

\begin{lemma}\label{lemma: finite}
Let $S$ be the first octant and let $I\subset \KK[S]$ be a monomial ideal with $I$ full. Letting $i\neq j$, assume that the semigroup automorphism permuting $e_i$ with $e_j$ and leaving fixed $e_k$ for $k\neq i,j$ induces a toric automorphism of $\tau_{ij}\colon\KK[S]/I\to\KK[S]/I$.
Then $\tau_{ij}$ belongs to $\Aut^0_\KK(\KK[S]/I)$ if and only if $e_i-e_j$ and $e_j-e_i$ belong to $\mathcal{R}(S,I)$.
\end{lemma}

\begin{proof}

We first prove the forward implication. Assume $e_i-e_j, e_j-e_i\in \mathcal{R}(S,I)$. Let $\alpha = e_i - e_j$, $\beta = e_j - e_i$, $p = e_j^*$, and $q = e_i^*$. We define the automorphisms $g = \exp(\overline{\partial}_{\alpha,p})$ and $h = \exp(\overline{\partial}_{\beta,q})$. By the definition of homogeneous derivations, we have $\overline{\partial}_{\alpha,p}(\x_j) = \x_i$ and $\overline{\partial}_{\alpha,p}(\x_{j'}) = 0$ for $j' \neq j$, which gives $g(\x_j) = \x_j + \x_i$ and $g(\x_{j'}) = \x_{j'}$ for $j' \neq j$. Similarly, $\overline{\partial}_{\beta,q}(\x_i) = \x_j$ and $\overline{\partial}_{\beta,q}(\x_{i'}) = 0$ for $i' \neq i$, which gives $h(\x_i) = \x_i + \x_j$ and $h(\x_{i'}) = \x_{i'}$ for $i' \neq i$. 

For $t\in T$, our fixed diagonal torus, we define $f_t$ to be the automorphism determined by $t$. With $t=(t_1,\dots,t_n)$ where $t_j=-1$ and $t_k=1$ for $k\neq j$, we set $\varphi=f_t\circ g$ and $\psi=f_t\circ h$. By this construction, we have $\varphi(\x_i)=\x_i$ and $\varphi(\x_j)=\x_i-\x_j$, and $\psi(\x_i)=\x_i-\x_j$ and $\psi(\x_j)=-\x_j$. The transposition $\tau_{ij}$ that permutes $\x_i$ with $\x_j$ is given by $\tau_{ij}=\varphi^{-1}\circ\psi\circ\varphi$. This shows $\tau_{ij}\in\Aut^0_\KK(\KK[S]/I)$ and we derive a contradiction.

To prove the converse implication, We assume that  $\tau_{ij}\in \Aut_\KK(\KK[S]/I)$  and that either $e_i-e_j$ or $e_j-e_i$ do not lie in $\mathcal{R}(S,I)$.

Since $\tau_{ij}\in \Aut_\KK(\KK[S]/I)$ is a toric automorphism, let $\widehat\tau_{ij}\colon S\to S$ be the respective semigroup automorphism. Assume that $e_i-e_j$ lie in $\mathcal{R}(S,I)$. Then necessarily $e_j-e_i\in\mathcal{R}(S,I)$ by \eqref{eq:R_I}. Indeed, with the notation therein, if we take $\mathbf{a}_k\notin (e_i^*)^\perp$ then $\widehat{\tau}_{ij}(\mathbf{a}_k)\in \{\mathbf{a}_1,\dots,\mathbf{a}_\ell\}$ then $e_i-e_j+\widehat{\tau}_{ij}(\mathbf{a}_k)\in \supp(I)$ then $\widehat{\tau}_{ij}(e_i-e_j+\widehat{\tau}_{ij}(\mathbf{a}_k))=e_j-e_i+\mathbf{a}_k\in \supp(I)$ proving that $e_j-e_i\in \mathcal{R}(S,I)$. By this argument, since $e_i-e_j$ or $e_j-e_i$ do not lie in $\mathcal{R}(S,I)$, we conclude that $e_i-e_j$ and $e_j-e_i$ do not lie in $\mathcal{R}(S,I)$.

We now define a map
\begin{equation}\label{eq:Phi}
    \Phi\colon \Aut^{0}_\KK(\KK[S]/I)\to \operatorname{GL}(\overline{\mathfrak{m}}/\overline{\mathfrak{m}}^2)\cong \operatorname{GL}_n(\KK)
\end{equation}
as follows: given $g\in \Aut^{0}_\KK(\KK[S]/I)$, write $g(\overline{x}_i) = \sum_{k=1}^n a_{ik}\overline{x}_k + h_i$ with $h_i\in \overline{\mathfrak{m}}^2$, and define $\Phi(g)$ to be the matrix $(a_{ik})$. This map is well-defined because the monomial ideal is full and contains no linear monomials, and the algebra $\KK[S]/I$ is local. 

Since $\operatorname{Im}\Phi$ is a connected group with maximal torus $\Phi(T)$, it is generated by $\Phi(T)$ and the images $\Phi(U_\alpha)$ of the root subgroups $U_\alpha$ for $\alpha\in\mathcal{R}(S,I)$. Moreover, it is enough to consider root subgroups with $\sum_{i=1}^n \alpha_i = 0$, that is, $\alpha = e_{i'} - e_{j'}$ with $i' \neq j'$. Indeed, if $\sum_{i=1}^n \alpha_i > 0$, then $\Phi(U_\alpha) = {\operatorname{Id}}$.

Without loss of generality, assume $i=1$ and $j=2$ and consider the projection homomorphism
$$\pi\colon \operatorname{GL}_n(\KK)\to \operatorname{GL}_2(\KK)$$
defined by $\pi((a_{kl})_{1\leq k,l\leq n}) = (a_{kl})_{1\leq k,l\leq 2}$, which sends a matrix to its upper-left $2\times 2$ block. Suppose for contradiction that $\tau_{12}\in\Aut^0_\KK(\KK[S]/I)$. Then $\Phi(\tau_{12})$ belongs to $\operatorname{Im}\Phi$, which is generated by $\Phi(T)$ and the root subgroups $\Phi(U_\alpha)$ for $\alpha\in\mathcal{R}(S,I)$. It follows that $\pi(\Phi(\tau_{12}))$ lies in the subgroup generated by $\pi(\Phi(T))$ and $\pi(\Phi(U_\alpha))$ for $\alpha\in\mathcal{R}(S,I)$.

Observe that $\pi(\Phi(T))$ is the diagonal torus in $\operatorname{GL}_2(\KK)$. Moreover, for any root $\alpha\in\mathcal{R}(S,I)$ with $\alpha\neq e_1-e_2, e_2-e_1$, the projection $\pi(\Phi(U_\alpha))$ consists only of diagonal matrices. Since by hypothesis neither $e_1-e_2$ nor $e_2-e_1$ belongs to $\mathcal{R}(S,I)$, we conclude that $\pi(\Phi(\tau_{12}))$ must lie in the diagonal torus of $\operatorname{GL}_2(\KK)$. However, $\Phi(\tau_{12})$ is a permutation matrix that exchanges the first two coordinates, so $\pi(\Phi(\tau_{12}))$ is the $2\times 2$ permutation matrix $\begin{pmatrix} 0 & 1 \\ 1 & 0 \end{pmatrix}$, which does not lie in the diagonal torus. This contradiction shows that $\tau_{12}\notin\Aut^0_\KK(\KK[S]/I)$, completing the proof.
\end{proof}

\begin{proposition}\label{proposition: finite}
Let $S$ be the first octant and let $I\subset \KK[S]$ be a monomial ideal with $I$ full. Let $\tau\in \Aut(S,I)$ be a toric automorphism. Then $\tau\in \Aut^0_\KK(\KK[S]/I)$ if and only if $\tau$ is a product of toric transpositions $\tau_{i_1j_1}\cdots\tau_{i_kj_k}$ with $\tau_{i_\ell j_\ell}\in\Aut^0_\KK(\KK[S]/I)$ for all $\ell=1,\ldots,k$.
\end{proposition}
\begin{proof}
The converse implication is immediate, so we only prove the direct implication. Let $\tau\in \Aut(S,I)$ be a toric automorphism such that $\tau\in \Aut^{0}_\KK(\KK[S]/I)$. By the Levi decomposition, we have $\Aut^{0}_\KK(\KK[S]/I) = L\ltimes R$, where $R$ is the unipotent radical and $L$ is a maximal reductive subgroup containing our fixed maximal torus $T = \spec \KK[M]$ that acts by scaling coordinates as in \eqref{eq:torus-action}. 

Since toric automorphisms act by permuting coordinates, they are semisimple. By the Jordan decomposition, every semisimple element in $\Aut^0_\KK(\KK[S]/I)$ belongs to the reductive part $L$, hence $\tau \in L$. Moreover, a straightforward verification shows that $\tau$ normalizes $T$. Therefore $\tau \in N_L(T)$, the normalizer of $T$ in $L$. Note that $\tau \notin T$ unless $\tau$ is the identity. Consider the Weyl group $W = N_L(T)/T$. The image $[\tau]$ of $\tau$ in $W$ is non-trivial permutation.

By \cref{prop:aut0}, the group $\Aut^0_\KK(\KK[S]/I)$ is generated by $T$ and root subgroups $U_\alpha$ for $\alpha\in\mathcal{R}(S,I)$. Consider the map
$$\Phi\colon \Aut^{0}_\KK(\KK[S]/I)\to \operatorname{GL}(\overline{\mathfrak{m}}/\overline{\mathfrak{m}}^2)\cong \operatorname{GL}_n(\KK)$$
defined in \eqref{eq:Phi}.The kernel $\ker\Phi$ is contained in the unipotent radical $R$. 
The root subgroups $U_\alpha$ that can contribute to the reductive 
part $L$ are those with $\alpha = e_i-e_j$ for some $i\neq j$, as 
these act non-trivially on $\mathfrak{m}/\mathfrak{m}^2$ via $\Phi$.
Root subgroups in $\ker\Phi$ belong to $R$. Note that not all root 
subgroups $U_{e_i-e_j}$ with $e_i-e_j \in \mathcal{R}(S,I)$ necessarily 
contribute to $L$; some may belong to $R$. By \cite[Theorem~7.1.9]{Spr09}, 
the Weyl group $W$ is generated by elements of order 2 corresponding to 
roots that contribute to $L$. The kernel $\ker\Phi$ is contained in the unipotent radical $R$. The root subgroups $U_\alpha$ that can contribute to the reductive part $L$ are those with $\alpha = e_i-e_j$ for some $i\neq j$, as these act non-trivially on $\operatorname{GL}_n(\KK)$ via $\Phi$. Root subgroups in $\ker\Phi$ belong to $R$. Note that not all root subgroups $U_{e_i-e_j}$ with $e_i-e_j \in \mathcal{R}(S,I)$ necessarily contribute to $L$; some may belong to $R$. By \cite[Theorem~7.1.9]{Spr09}, the Weyl group $W$ is generated by elements of order 2 corresponding to roots that contribute to $L$. Therefore, we can write
$$[\tau] = s_{\beta_1} \cdots s_{\beta_k}\quad \mbox{where} \quad  \beta_\ell = e_{i_\ell}-e_{j_\ell} \in \mathcal{R}(S,I) \quad\mbox{for some}\quad i_\ell \neq j_\ell\,.$$

For each root $\beta_\ell = e_{i_\ell}-e_{j_\ell}$ in the decomposition, we choose a representative $n_{\beta_\ell} \in N_L(T)$ of $s_{\beta_\ell}$. Following \cite[Section~7.1.6]{Spr09}, this representative is chosen as an element in the normalizer of $T$ within $L$ that projects to $s_{\beta_\ell}$ in the Weyl group $W = N_L(T)/T$. Specifically, $n_{\beta_\ell}$ normalizes $T$ (i.e., $n_{\beta_\ell} \circ t \circ n_{\beta_\ell}^{-1} \in T$ for all $t \in T$) but does not centralize it (i.e., $n_{\beta_\ell} \circ t \circ n_{\beta_\ell}^{-1} \neq t$ for some $t \in T$). The choice of such a representative is not unique, as any element of the coset $n_{\beta_\ell} \circ T$ would serve equally well; we fix one such choice for each $\beta_\ell$.

The element $s_{\beta_\ell}$ acts on the character group\footnote{Our notation for the character group is $M$, but in this proof we keep the standard notation from \cite{Spr09} for ease of reading.} $X(T)$ of the torus via the reflection formula \cite[Section~7.1.8]{Spr09}
$$s_{\beta_\ell}(x) = x - \langle x, \beta_\ell^\vee \rangle \beta_\ell,$$
where $x \in X(T)$ is a character of $T$, $\beta_\ell^\vee$ is the coroot, and $\langle \cdot, \cdot \rangle$ is the pairing between characters and cocharacters. In our setting, $T = (\KK^*)^n$ acts by scaling coordinates, and characters correspond to monomials: the basic characters $e_i^* \in X(T)$ are given by $e_i^*(t_1, \ldots, t_n) = t_i$. For $\beta_\ell = e_{i_\ell}-e_{j_\ell}$, the reflection formula shows that $s_{\beta_\ell}$ exchanges $e_{i_\ell}^*$ with $e_{j_\ell}^*$ while fixing the other basic characters.

Since $n_{\beta_\ell} \in N_L(T)$ is a representative of $s_{\beta_\ell}$ in the normalizer, it acts on $T$ by conjugation: for $t \in T$, we have $n_{\beta_\ell} \circ t \circ n_{\beta_\ell}^{-1} = t'$ where the character $x \in X(T)$ evaluated at $t'$ equals $(s_{\beta_\ell}(x))$ evaluated at $t$. This means $n_{\beta_\ell}$ permutes the coordinates of $T$, exchanging the $i_\ell$-th and $j_\ell$-th components.
When $n_{\beta_\ell}$ acts on $\KK[S]/I$, it must permute the variables $\overline{x}_{i_\ell}$ and $\overline{x}_{j_\ell}$ accordingly, possibly with diagonal scaling. Since $\tau \in \Aut(S,I)$ is a toric automorphism, we can write 
$$n_{\beta_\ell} = \tau_{i_\ell j_\ell} \circ f_{s_\ell},\quad\mbox{or equivalently} \quad \tau_{i_\ell j_\ell} = n_{\beta_\ell} \circ f_{s_\ell}^{-1}\,.$$
for some $f_{s_\ell} \in T$, where $\tau_{i_\ell j_\ell}$ is the toric transposition that exchanges $\overline{x}_{i_\ell}$ with $\overline{x}_{j_\ell}$ and fixes all other variables.

Since $n_{\beta_\ell} \in N_L(T) \subset L \subset \Aut^0_\KK(\KK[S]/I)$ and $f_{s_\ell}^{-1} \in T \subset \Aut^0_\KK(\KK[S]/I)$ (by \cref{lemma: torus}), we conclude that $\tau_{i_\ell j_\ell} \in \Aut^0_\KK(\KK[S]/I)$ for all $\ell=1,\ldots,k$, completing the proof.
\end{proof}

\begin{remark}\label{remark: finite}
The previous result proves that to study when a toric automorphism lies in the identity component, we need to determine when $e_i-e_j$ and $e_j-e_i$ are Demazure roots. Equivalently, this amounts to determining when $x_i\frac{\partial}{\partial x_j}$ and $x_j\frac{\partial}{\partial x_i}$ both descend to derivations of the quotient ring $\KK[S]/I$. That is, if $\XX^{\alpha}$ is a generator of $I$ with $\XX^{e_i}$ as a factor, then $\XX^{\alpha+e_j-e_i}\in I$, and if $\XX^{\beta}$ is a generator of $I$ with $\XX^{e_j}$ as a factor, then $\XX^{\beta+e_i-e_j}\in I$.
\end{remark}
\begin{example}\label{ex:non-connected}
Let $S = \ZZ^3_{\geq 0}$ and consider the monomial ideal
$$I = (x_1^2x_2, x_2^2x_3, x_3^2x_1, x_1^3, x_2^3, x_3^3) \subset \KK[x_1, x_2, x_3].$$
The ideal $I$ is invariant under the cyclic permutations $(123)$ and $(132)$, but no transposition preserves $I$. By \cref{proposition: finite}, any toric automorphism in $\Aut^0_\KK(\KK[S]/I)$ must be a product of transpositions in $\Aut^0_\KK(\KK[S]/I)$. Since no transpositions preserve $I$, the cyclic permutations are not in $\Aut^0_\KK(\KK[S]/I)$, and therefore $\Aut_\KK(\KK[S]/I)$ is not connected. Moreover, the component group $\Aut_\KK(\KK[S]/I)/\Aut^0_\KK(\KK[S]/I)$ contains a subgroup isomorphic to $\ZZ/3\ZZ$ generated by the image of $(123)$.
\end{example}

We can now present our main theorem, which summarizes all the results in this section and provides a description of the automorphism group of monomial algebras $\KK[S]/I$ in the case were $S$ is the first octant and $I$ has cofinite support. Recall that the map $\iota_\alpha$ is defined in \eqref{eq:iota}.

\begin{theorem} \label{th:aut}
Let $S$ be the first octant and let $I$ be a full monomial ideal of $\KK[S]$ with cofinite support. Also, let $T = \spec \KK[M] \subset \Aut_\KK(\KK[S]/I)$.
\begin{enumerate}[$(i)$]
 \item The automorphism group $\Aut_\KK(\KK[S]/I)$ is linear algebraic, and $T$ is a maximal torus of $\Aut_\KK(\KK[S]/I)$.
 \item The neutral component $\Aut^0_\KK(\KK[S]/I)$ is generated by $T$ and the images of $\iota_\alpha$ for all $\alpha \in \mathcal{R}(S, I)$.
 \item The quotient group $\Aut_\KK(\KK[S]/I) / \Aut^0_\KK(\KK[S]/I)$ is generated by the images of toric automorphisms.
 \end{enumerate}
\end{theorem}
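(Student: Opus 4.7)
The plan is to recognize that Theorem \ref{th:aut} is essentially an assembly of the structural results proved earlier in this section, so the proof consists of checking that the hypotheses match and invoking the relevant propositions in turn.

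Part $(i)$ follows from two previous results. The fact that $\Aut_\KK(\KK[S]/I)$ is linear algebraic is Proposition \ref{propositio: generating}, which only uses that $\KK[S]/I$ is a finite-dimensional $\KK$-vector space, a consequence of $I$ having cofinite support. The claim that $T$ is a maximal torus is Lemma \ref{lemma: torus}, and this is where both remaining hypotheses enter crucially: full-ness is needed, via Lemma \ref{lem:torus-faithful}, to guarantee that $T$ embeds faithfully in $\Aut_\KK(\KK[S]/I)$, while $S$ being the first octant is used in the centralizer computation through the basis vectors $e_i\notin\supp(I)$.

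Part $(ii)$ is directly Proposition \ref{prop:aut0}, which in turn rests on Lemma \ref{lem:unipotent-lnd} identifying elements of generalized root subgroups with one-parameter groups $\exp(s\overline{\partial}_{\alpha,p})$ generated by homogeneous locally nilpotent derivations; the latter being classified in Theorem \ref{main-classification}. Part $(iii)$ is Proposition \ref{prop:finite}. The substantial content there, which I would single out as the technical heart of the theorem, is the normalization argument: given a class $[g]\in\Aut_\KK(\KK[S]/I)/\Aut^0_\KK(\KK[S]/I)$, conjugacy of maximal tori in a connected linear algebraic group allows one to adjust $g$ within its coset so that it normalizes $T$; the $T$-weight decomposition of $\KK[S]/I$ then forces $g$ to permute the one-dimensional weight spaces $\KK\cdot\overline{\XX}^m$, and a further twist by an element of $T$ yields a strict monomial permutation, which defines a toric automorphism via the semigroup map $\widehat{g}(e_i)=m_i$. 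Full-ness of $I$ is once again what guarantees that this semigroup map is well defined. Since all the work is already done, no genuine obstacle remains in the assembly itself; the subtlest point was earlier in Lemma \ref{lem:unipotent-lnd} and Proposition \ref{prop:finite}, where the first-octant hypothesis is essential for $T$ to have one-dimensional weight spaces on the monomial basis.
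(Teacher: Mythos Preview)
Your proposal is correct and mirrors the paper's own proof, which simply cites Proposition~\ref{propositio: generating} and Lemma~\ref{lemma: torus} for $(i)$, Proposition~\ref{prop:aut0} for $(ii)$, and Proposition~\ref{prop:finite} for $(iii)$. Your added commentary on where each hypothesis enters is accurate, with the small caveat that the one-dimensionality of the $T$-weight spaces holds for any pointed $S$; the first-octant hypothesis is rather used to ensure the $e_i$ lie outside $\supp(I)$ and freely generate $S$, which is what drives the centralizer computation in Lemma~\ref{lemma: torus} and the construction of $\widehat{g}$ in Proposition~\ref{prop:finite}.
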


\begin{proof}
Statement $(i)$ is contained in \cref{propositio: generating} and \cref{lemma: torus}. Statement $(ii)$ is in \cref{prop:aut0}. Finally, statement $(iii)$ is in \cref{prop:finite}.
\end{proof}

\begin{remark}
Using the Levi decomposition of $\Aut^0_\KK(\KK[S]/I)$, it is natural to ask under which conditions the reductive part is exactly the torus $T$. The answer is: if and only if for no pair $i\neq j$ are both $e_i-e_j$ and $e_j-e_i$ simultaneously in $\mathcal{R}(S,I)$. This follows from the following observations: in the reductive part $L$, root subgroups come in pairs corresponding to roots $\alpha$ and $-\alpha$; via the map $\Phi$ defined in \cref{lemma: finite}, the reductive part embeds into $\operatorname{Im}\Phi$, where only roots of the form $\alpha = e_i - e_j$ with $i \neq j$ can appear. Therefore, if both $e_i-e_j$ and $e_j-e_i$ belong to $\mathcal{R}(S,I)$, the corresponding root subgroups contribute to the reductive part $L$, making it strictly larger than $T$. Conversely, if no such pair exists, every root subgroup lies in the unipotent radical $R$, and by \cref{lemma: finite} no toric transposition can lie in $\Aut^0_\KK(\KK[S]/I)$, forcing $L = T$.
\end{remark}

We finish the paper providing several examples illustrating our results. Let $d = \dim_\KK(\KK[S]/I)$. Note that since automorphisms fix the unit, one could represent the action using $(d-1) \times (d-1)$-matrices by omitting the coordinate corresponding to the unit. However, for clarity and ease of presentation, we display the full $d \times d$-matrices showing the complete action on $\KK[S]/I$.

\begin{example}\label{example: x^4}

Letting $M=\ZZ$ and $S=\ZZ_{\geq0}$, we let $I=(x^4)$. The group of toric automorphisms is trivial so $$\Aut_\KK(\KK[S]/I)=\Aut^0_\KK(\KK[S]/I)=\Aut_\KK(\KK[x]/(x^4))\,.$$
By \cref{main-classification} and \cref{lemma: trivial-derivation}, we have that only
$\alpha_1=1$ and $\alpha_2=2$ give rise to non-trivial homogeneous locally nilpotent derivations different from the zero derivation on $\KK[S]/I$. For $p\in N_\KK=G(\alpha_i)=\KK$, for $i=1,2$, we obtain the derivations
$$
\begin{array}{ccc}
 \begin{array}{rcl}\overline{\partial}_{\alpha_1,p} \colon \KK[S]/I& \to & \KK[S]/I \\ 
 \x & \mapsto & p\x^2 \\ 
 \end{array}
&
\quad \mbox{and}\quad 
&\begin{array}{rcl}\overline{\partial}_{\alpha_2,p} \colon \KK[S]/I& \to & \KK[S]/I \\ 
 \x & \mapsto & p\x^3 \\ 
 \end{array}\\

\end{array}\,.
$$
Applying the exponential map, we obtain 
$$
\begin{array}{ccc}
 \begin{array}{rcl}\exp(\overline{\partial}_{\alpha_1,p}) \colon \KK[S]/I& \to & \KK[S]/I \\ 
 \x & \mapsto & \x+p\x^2+p^2\x^3 \\ 
 \end{array}
&
\quad \mbox{and}\quad 
&\begin{array}{rcl}\exp(\overline{\partial}_{\alpha_2,p}) \colon \KK[S]/I& \to & \KK[S]/I \\ 
 \x & \mapsto & \x+p\x^3 \\ 
 \end{array}\\
\end{array}\,.
$$

Letting $e_1=1, e_2=\overline{x},e_3=\overline{x}^2, e_4=\overline{x}^3$ we have that $\KK[S]/I=\KK\cdot e_1\oplus\KK\cdot e_2\oplus\KK\cdot e_3\oplus\KK\cdot e_4$. Moreover, $\Aut_\KK(\KK[S]/I)\subset\operatorname{GL}_4(\KK)$ by the proof of \cref{propositio: generating}. And by \cref{th:aut}, the automorphism group $\Aut_\KK(\KK[S]/I)$ is generated by the matrices
$$
\begin{array}{cccc}
 
\begin{pmatrix}
1 & 0 & 0&0\\
0 & t & 0&0\\
0 & 0 & t^2&0\\
0 & 0 & 0&t^3\\
\end{pmatrix},
&\begin{pmatrix}
1 & 0 & 0&0\\
0 & 1 & 0&0\\
0 & r & 1&0\\
0 & r^2 & 2r&1\\
\end{pmatrix},&\mbox{and}&\begin{pmatrix}
1 & 0 & 0&0\\
0 & 1 & 0&0\\
0 & 0 & 1&0\\
0 & s & 0&1\\
\end{pmatrix},
\end{array}
$$
 with $t\in \GM$, $r\in \GA$ and $s\in \GA$.
\end{example}

\begin{example}\label{example: x^2y^2}

Letting $M=\ZZ^2$ and $S=\ZZ_{\geq0}^2$ we let $I=(x^2,y^2)$. The group of toric automorphisms is $F\cong \ZZ/2\ZZ$ with generator given by the permutation $\overline{x}\mapsto \overline{y}$ and $\overline{y}\to \overline{x}$. Hence, $\Aut_\KK(\KK[S]/I)$ is generated by $\Aut_\KK^0(\KK[S]/I)$ and $F$. 

\begin{figure}[H]
\begin{picture}(100,95)
\definecolor{gray1}{gray}{0.7}
\definecolor{gray2}{gray}{0.85}
\definecolor{green}{RGB}{0,124,0}
\textcolor{gray2}{\put(0,20){\vector(1,0){90}}}
\textcolor{gray2}{\put(10,10){\vector(0,1){88}}}

\put(0,10){\textcolor{gray1}{\circle*{3}}}
\put(10,10){\textcolor{gray1}{\circle*{3}}}
\put(20,10){\textcolor{gray1}{\circle*{3}}} 
\put(30,10){\textcolor{red}{\circle*{3}}} 
\put(40,10){\textcolor{red}{\circle*{3}}} 
\put(50,10){\textcolor{red}{\circle*{3}}} 
\put(60,10){\textcolor{red}{\circle*{3}}} 
\put(70,10){\textcolor{red}{\circle*{3}}} 
\put(80,10){\textcolor{red}{\circle*{3}}}

\put(0,20){\textcolor{gray1}{\circle*{3}}}
\put(10,20){\circle{3}} 
\put(20,20){\textcolor{green}{\circle*{3}}} 
\put(30,20){{\circle*{3}}} 
\put(40,20){{\circle*{3}}} 
\put(50,20){{\circle*{3}}} 
\put(60,20){\circle*{3}}
\put(70,20){\circle*{3}}
\put(80,20){\circle*{3}}

\put(0,30){\textcolor{gray1}{\circle*{3}}}
\put(10,30){\textcolor{green}{\circle*{3}}} 
\put(20,30){\textcolor{green}{\circle*{3}}} 
\put(30,30){{\circle*{3}}} 
\put(40,30){{\circle*{3}}} 
\put(50,30){{\circle*{3}}} 
\put(60,30){{\circle*{3}}} 
\put(70,30){\circle*{3}}
\put(80,30){\circle*{3}}

\put(0,40){\textcolor{red}{\circle*{3}}}
\put(10,40){{\circle*{3}}} 
\put(20,40){{\circle*{3}}} 
\put(30,40){{\circle*{3}}} 
\put(40,40){{\circle*{3}}}
\put(50,40){{\circle*{3}}}
\put(60,40){\circle*{3}} 
\put(70,40){\circle*{3}}
\put(80,40){\circle*{3}}

\put(0,50){\textcolor{red}{\circle*{3}}}
\put(10,50){{\circle*{3}}} 
\put(20,50){{\circle*{3}}} 
\put(30,50){{\circle*{3}}} 
\put(40,50){\circle*{3}}
\put(50,50){\circle*{3}}
\put(60,50){\circle*{3}} 
\put(70,50){\circle*{3}} 
\put(80,50){\circle*{3}}

\put(0,60){\textcolor{red}{\circle*{3}}}
\put(10,60){{\circle*{3}}} 
\put(20,60){{\circle*{3}}} 
\put(30,60){{\circle*{3}}} 
\put(40,60){\circle*{3}}
\put(50,60){\circle*{3}}
\put(60,60){\circle*{3}} 
\put(70,60){\circle*{3}}
\put(80,60){\circle*{3}}

\put(0,70){\textcolor{red}{\circle*{3}}}
\put(10,70){{\circle*{3}}}
\put(20,70){{\circle*{3}}}
\put(30,70){{\circle*{3}}}
\put(40,70){{\circle*{3}}}
\put(50,70){\circle*{3}}
\put(60,70){\circle*{3}} 
\put(70,70){\circle*{3}} 
\put(80,70){\circle*{3}}

\put(0,80){\textcolor{red}{\circle*{3}}}
\put(10,80){{\circle*{3}}}
\put(20,80){{\circle*{3}}}
\put(30,80){{\circle*{3}}}
\put(40,80){{\circle*{3}}}
\put(50,80){{\circle*{3}}}
\put(60,80){\circle*{3}} 
\put(70,80){\circle*{3}} 
\put(80,80){\circle*{3}}

\put(0,90){\textcolor{red}{\circle*{3}}}
\put(10,90){{\circle*{3}}}
\put(20,90){{\circle*{3}}}
\put(30,90){{\circle*{3}}}
\put(40,90){{\circle*{3}}}
\put(50,90){{\circle*{3}}}
\put(60,90){{\circle*{3}}}
\put(70,90){\circle*{3}} 
\put(80,90){\circle*{3}}

\end{picture}
\caption{$I=(x^2,y^2)$}\label{fig:1-ex5}
\end{figure}

By \cref{main-classification} and \cref{lemma: trivial-derivation}, we have that only $\alpha_1=(1,0)$ and $\alpha_2=(0,1)$ give rise to non-trivial homogeneous locally nilpotent derivations with $G(\alpha_1)/\ker\iota_{\alpha_1}\cong\KK\cdot (0,1)$ and $G(\alpha_2)/\ker\iota_{\alpha_2}\cong\KK\cdot (1,0)$. Letting $p=(p_1,p_2)\in N_\KK$, this yields 
$$
\begin{array}{cc}
 \begin{array}{rcl}\overline{\partial}_{\alpha_1,p} \colon \KK[x,y]/I& \to & \KK[x,y]/I \\ 
 \x & \mapsto & 0 \\ 
 \y & \mapsto &p_2\x\y \\ 
 \end{array}

\quad \mbox{and}\quad 
&\begin{array}{rcl}\overline{\partial}_{\alpha_2,p} \colon \KK[x,y]/I& \to & \KK[x,y]/I \\ 
 \x & \mapsto & p_1\x\y \\ 
 \y & \mapsto & 0 \\ 
 \end{array}\\

\end{array}\,.
$$
Applying the exponential map, we obtain 
$$
\begin{array}{cc}
 \begin{array}{rcl}\exp(\overline{\partial}_{\alpha_1,p}) \colon \KK[x,y]/I& \to & \KK[x,y]/I \\ 
 \x & \mapsto & \x \\ 
 \y & \mapsto & \y+p_2\x\y \\ 
 \end{array}

\quad \mbox{and}\quad 
&\begin{array}{rcl}\exp(\overline{\partial}_{\alpha_2,p}) \colon \KK[x,y]/I& \to & \KK[x,y]/I \\ 
 \x & \mapsto & \x+p_1\x\y \\ 
 \y & \mapsto & \y \\ 
 \end{array}\\

\end{array}\,.
$$

Letting $e_1=1, e_2=\overline{y},e_3=\overline{x}, e_4=\overline{x}\overline{y}$ we have that $\KK[S]/I=\KK\cdot e_1\oplus\KK\cdot e_2\oplus\KK\cdot e_3\oplus\KK\cdot e_4$. Moreover $\Aut_\KK(\KK[x,y]/I)\subset\operatorname{GL}_4(\KK)$ by the proof of \cref{propositio: generating}, and by \cref{th:aut}, $\Aut_\KK^0(\KK[S]/I)$ is generated by the matrices
$$
\begin{array}{ccc}
 
\begin{pmatrix}
1 & 0 & 0&0\\
0 & t_2 & 0&0\\
0 & 0 & t_1&0\\
0 & 0 & 0&t_1t_2\\
\end{pmatrix},
&\begin{pmatrix}
1 & 0 & 0&0\\
0 & 1 & 0&0\\
0 & 0 & 1&0\\
0 & r & 0&1\\
\end{pmatrix},\quad \mbox{and}\quad &\begin{pmatrix}
1 & 0 & 0&0\\
0 & 1 & 0&0\\
0 & 0 & 1&0\\
0 & 0 & s&1\\
\end{pmatrix},
\end{array}
$$
 with $(t_1,t_2)\in T=\GM^2$, $r\in \GA$ and $s\in \GA$.

Finally, all matrices are lower triangular, so the generator of $F$ is not contained in $\Aut_\KK^0(\KK[S]/I)$. We conclude that 
$$\Aut_\KK(\KK[S]/I)\cong \Aut_\KK^0(\KK[S]/I)\rtimes F\,. $$
\end{example}

\begin{example}\label{example:non-solv}
Letting $M=\ZZ^2$ and $S=\ZZ_{\geq0}^2$ we let $I=(x^2,xy,y^2)$.

\begin{figure}[H]
\begin{picture}(100,95)
\definecolor{gray1}{gray}{0.7}
\definecolor{gray2}{gray}{0.85}
\definecolor{green}{RGB}{0,124,0}

\textcolor{gray2}{\put(0,20){\vector(1,0){90}}}

\textcolor{gray2}{\put(10,10){\vector(0,1){88}}}

\put(0,10){\textcolor{gray1}{\circle*{3}}}
\put(10,10){\textcolor{gray1}{\circle*{3}}}
\put(20,10){\textcolor{red}{\circle*{3}}} 
\put(30,10){\textcolor{red}{\circle*{3}}} 
\put(40,10){\textcolor{red}{\circle*{3}}} 
\put(50,10){\textcolor{red}{\circle*{3}}} 
\put(60,10){\textcolor{red}{\circle*{3}}} 
\put(70,10){\textcolor{red}{\circle*{3}}} 
\put(80,10){\textcolor{red}{\circle*{3}}}

\put(0,20){\textcolor{gray1}{\circle*{3}}}
\put(10,20){\circle{3}} 
\put(20,20){\textcolor{green}{\circle*{3}}} 
\put(30,20){{\circle*{3}}} 
\put(40,20){{\circle*{3}}} 
\put(50,20){{\circle*{3}}} 
\put(60,20){\circle*{3}}
\put(70,20){\circle*{3}}
\put(80,20){\circle*{3}}

\put(0,30){\textcolor{red}{\circle*{3}}}
\put(10,30){\textcolor{green}{\circle*{3}}} 
\put(20,30){{\circle*{3}}} 
\put(30,30){{\circle*{3}}} 
\put(40,30){{\circle*{3}}} 
\put(50,30){{\circle*{3}}} 
\put(60,30){{\circle*{3}}} 
\put(70,30){\circle*{3}}
\put(80,30){\circle*{3}}

\put(0,40){\textcolor{red}{\circle*{3}}}
\put(10,40){{\circle*{3}}} 
\put(20,40){{\circle*{3}}} 
\put(30,40){{\circle*{3}}} 
\put(40,40){{\circle*{3}}}
\put(50,40){{\circle*{3}}}
\put(60,40){\circle*{3}} 
\put(70,40){\circle*{3}}
\put(80,40){\circle*{3}}

\put(0,50){\textcolor{red}{\circle*{3}}}
\put(10,50){{\circle*{3}}} 
\put(20,50){{\circle*{3}}} 
\put(30,50){{\circle*{3}}} 
\put(40,50){\circle*{3}}
\put(50,50){\circle*{3}}
\put(60,50){\circle*{3}} 
\put(70,50){\circle*{3}} 
\put(80,50){\circle*{3}}

\put(0,60){\textcolor{red}{\circle*{3}}}
\put(10,60){{\circle*{3}}} 
\put(20,60){{\circle*{3}}} 
\put(30,60){{\circle*{3}}} 
\put(40,60){\circle*{3}}
\put(50,60){\circle*{3}}
\put(60,60){\circle*{3}} 
\put(70,60){\circle*{3}}
\put(80,60){\circle*{3}}

\put(0,70){\textcolor{red}{\circle*{3}}}
\put(10,70){{\circle*{3}}}
\put(20,70){{\circle*{3}}}
\put(30,70){{\circle*{3}}}
\put(40,70){{\circle*{3}}}
\put(50,70){\circle*{3}}
\put(60,70){\circle*{3}} 
\put(70,70){\circle*{3}} 
\put(80,70){\circle*{3}}

\put(0,80){\textcolor{red}{\circle*{3}}}
\put(10,80){{\circle*{3}}}
\put(20,80){{\circle*{3}}}
\put(30,80){{\circle*{3}}}
\put(40,80){{\circle*{3}}}
\put(50,80){{\circle*{3}}}
\put(60,80){\circle*{3}} 
\put(70,80){\circle*{3}} 
\put(80,80){\circle*{3}}

\put(0,90){\textcolor{red}{\circle*{3}}}
\put(10,90){{\circle*{3}}}
\put(20,90){{\circle*{3}}}
\put(30,90){{\circle*{3}}}
\put(40,90){{\circle*{3}}}
\put(50,90){{\circle*{3}}}
\put(60,90){{\circle*{3}}}
\put(70,90){\circle*{3}} 
\put(80,90){\circle*{3}}

\end{picture}\\
\caption{$I=(x^2,xy,y^2)$}\label{fig:1-ex6}
\end{figure}

The group of toric automorphisms $F$ is again isomorphic to $\ZZ/2\ZZ$ with generator given by the permutation $\overline{x}\mapsto \overline{y}$ and $\overline{y}\mapsto \overline{x}$. In this case, as we will show by the end of this example that $F\subset \Aut_\KK^0(\KK[S]/I)$. Hence,
$$\Aut_\KK(\KK[S]/I)= \Aut_\KK^0(\KK[S]/I). $$
By \cref{main-classification} and \cref{lemma: trivial-derivation}, we have that only $\alpha_1=(1,-1)$ and $\alpha_2=(-1,1)$ give rise to non-trivial homogeneous locally nilpotent derivations with $G(\alpha_1)=\KK\cdot (0,1)$ and $G(\alpha_2)=\KK\cdot (1,0)$. Letting $p=(p_1,p_2)\in N_{\KK}$, this yield
$$
\begin{array}{cc}
 \begin{array}{rcl}\overline{\partial}_{\alpha_1,p} \colon \KK[x,y]/I& \to & \KK[x,y]/I \\ 
 \x & \mapsto & 0 \\ 
 \y & \mapsto & p_2\x \\ 
 \end{array}

\quad \mbox{and}\quad
&\begin{array}{rcl}\overline{\partial}_{\alpha_2,p} \colon \KK[x,y]/I& \to & \KK[x,y]/I \\ 
 \x & \mapsto & p_1\y \\ 
 \y & \mapsto & 0 \\ 
 \end{array}\\

\end{array}\,.
$$
Applying the exponential map, we obtain: 

$$
\begin{array}{cc}
 \begin{array}{rcl}\exp(\overline{\partial}_{\alpha_1,p}) \colon \KK[x,y]/I& \to & \KK[x,y]/I \\ 
 \x & \mapsto & \x \\ 
 \y & \mapsto & \y+p_2\x \\ 
 \end{array}

\quad \mbox{and}\quad
&\begin{array}{rcl}\exp(\overline{\partial}_{\alpha_2,p}) \colon \KK[x,y]/I& \to & \KK[x,y]/I \\ 
 \x & \mapsto & \x+p_1\y \\ 
 \y & \mapsto & \y \\ 
 \end{array}\\

\end{array}\,.
$$

Letting now $e_1=1, e_2=\overline{y}$ and $e_3=\overline{x}$ we have that $\KK[S]/I=\KK\cdot e_1\oplus\KK\cdot e_2\oplus\KK\cdot e_3$. Moreover $\Aut_\KK(\KK[x,y]/I)\subset\operatorname{GL}_3(\KK)$ by the proof of \cref{propositio: generating}. And by \cref{th:aut}, $\Aut_\KK^0(\KK[S]/I)$ is generated by the matrices
$$
\begin{array}{ccc}
 
\begin{pmatrix}
1 & 0 & 0\\
0 & t_2 & 0\\
0 & 0 & t_1\\

\end{pmatrix},
&\begin{pmatrix}
1 & 0 & 0\\
0 & 1 & 0\\
0 & r & 1\\

\end{pmatrix}, \quad \mbox{and}\quad &\begin{pmatrix}
1 & 0 & 0\\
0 & 1 & s\\
0 & 0 & 1\\
\end{pmatrix},
\end{array}
$$
with $(t_1,t_2)\in T=\GM^2$, $r\in \GA$ and $s\in \GA$. The group $\Aut_\KK^0(\KK[S]/I)$ is isomorphic to $\operatorname{GL}_2(\KK)$ taking the lower right $2\times 2$ blocks of the above matrices. 
 
In this case, $F\subset \Aut_\KK^0(\KK[S]/I)$ since the permutation $\x\mapsto \y$, $\y\mapsto \x$ is obtained as $\varphi^{-1}\circ\psi \circ \varphi$ where $\varphi,\psi\in \Aut_\KK^0(\KK[S]/I)$ are defined via 
$\varphi(\x,\y)=(\x,\x-\y)$ and $\psi(\x,\y)=(\x-\y,-\y)$.  This phenomenon where some toric morphisms are contained in $\Aut_\KK^0(\KK[S]/I)$ can only arise when there exist non-trivial root subgroups with weights $\alpha$ and $-\alpha$. 
\end{example}

\begin{remark} \label{rem:non-solv}
Note that $\Aut_\KK(\KK[S]/I)$ in \cref{example:non-solv} is not solvable since it contains $\operatorname{GL}_2(\KK)$. A similar result can be obtained for higher dimensions by taking $S=\ZZ_{\geq 0}^n$ and letting $I$ be the monomial ideal generated by all the monomials of degree $d\geq 2$. In this case, $\Aut_\KK(\KK[S]/I)$ is $\operatorname{GL}_n(\KK)$, and thus $\Aut_\KK(\KK[S]/I)$ is also not solvable.
\end{remark}

\begin{remark}[On Christophersen's problem]
Christophersen's problem asks whether for every finite-dimensional local algebra 
$A$ of dimension $d$ one has 
\[
\dim \Aut(A)^\circ \ge d-1,
\]
and whether equality characterizes $A \cong \KK[t]/(t^d)$; see \cite{Sta25}. 
In the monomial setting considered here, \cref{th:aut} provides an explicit 
description of $\Aut^\circ(A)$, making it natural to investigate this inequality 
within this class of algebras. 
\end{remark}

\bibliographystyle{alpha}
\bibliography{ref}

\end{document}